\documentclass{amsart}
\usepackage{amsthm, amssymb, amsmath, a4wide}
\usepackage[all]{xy} 
\usepackage{paralist} 
\usepackage{graphicx} 
\usepackage[sans]{dsfont} 
\usepackage{varioref} 
\usepackage{tikz}
\usepackage{pgf}
\usetikzlibrary{decorations.pathreplacing,calc}
\usepackage{xifthen}

\usepackage[
]{caption}
\usepackage[list=true]{subcaption}

\usepackage[backend=bibtex, style=alphabetic, backref=true]{biblatex}
\bibliography{bibi}

\usepackage{hyperref}

\title{
Algebraicity of normal analytic compactifications of $\cc^2$ with one irreducible curve at infinity
}
\author{Pinaki Mondal}
\address{College of The Bahamas}
\email{pinakio@gmail.com}

\subjclass[2010]{Primary 32J05, 14J26; Secondary 14E05, 14E15}

\setcounter{secnumdepth}{3}
%
\setcounter{tocdepth}{2}

\DeclareMathOperator\lt{LT}
\makeatletter

\newcommand{\Rmnum}[1]{\expandafter\@slowromancap\romannumeral #1@}
\makeatother




\DeclareMathOperator\gr{gr}
 
\DeclareMathOperator\ord{ord}

\DeclareMathOperator\proj{Proj}

\DeclareMathOperator\spec{Spec}

\newcommand{\scrB}{\ensuremath{\mathcal{B}}}

\newcommand{\scrE}{\ensuremath{\mathcal{E}}}
\newcommand{\scrF}{\ensuremath{\mathcal{F}}}
\newcommand{\scrG}{\ensuremath{\mathcal{G}}}

\newcommand{\cc}{\ensuremath{\mathbb{C}}}

\newcommand{\pp}{\ensuremath{\mathbb{P}}}
\newcommand{\qq}{\ensuremath{\mathbb{Q}}}
\newcommand{\rr}{\ensuremath{\mathbb{R}}}

\newcommand{\zz}{\ensuremath{\mathbb{Z}}}

\newcommand{\ppp}{\ensuremath{\mathfrak{p}}}

\newcommand{\sheaf}{\ensuremath{\mathcal{O}}}

\newcommand{\im}{\ensuremath{\Rightarrow}}

\newcommand{\dsum}{\ensuremath{\bigoplus}}

\newtheorem{thm}{Theorem}[section]
\newtheorem*{thm*}{Theorem}
\newtheorem{lemma}[thm]{Lemma}
\newtheorem*{lemma*}{Lemma}

\newtheorem{prop}[thm]{Proposition}
\newtheorem*{prop*}{Proposition}
\newtheorem{cor}[thm]{Corollary}
\newtheorem{claim}[thm]{Claim}
\newtheorem*{claim*}{Claim}
\newtheorem{proclaim}{Claim}[thm]
\newtheorem*{conjecture*}{Conjecture}

\theoremstyle{definition} 
\newtheorem{algorithm}[thm]{Algorithm}

\newtheorem*{constrinition*}{Construction-Definition}
\newtheorem{convention}[thm]{Convention}
\newtheorem*{convention*}{Convention}
\newtheorem{defn}[thm]{Definition}
\newtheorem*{defn*}{Definition}

\newtheorem*{definotation*}{Definition-Notation}
\newtheorem{example}[thm]{Example}
\newtheorem*{example*}{Example}

\newtheorem*{fact*}{Fact}

\newtheorem*{facts*}{Facts}

\newtheorem{notation}[thm]{Notation}

\newtheorem*{bold-note*}{Note}

\newtheorem*{problem*}{Problem}
\newtheorem{bold-question}[thm]{Question}
\newtheorem*{bold-question*}{Question}
\newtheorem{rem}[thm]{Remark}
\newtheorem{reminition}[thm]{Remark-Definition}
\newtheorem*{reminition*}{Remark-Definition}
\newtheorem{remexample}[thm]{Remark-Example}
\newtheorem{remexample*}{Remark-Example}
\newtheorem{remtation}[thm]{Remark-Notation}
\newtheorem*{remtation*}{Remark-Notation}

\newtheorem*{remuestion*}{Remark-Question}

\theoremstyle{remark}
\newtheorem*{rem*}{Remark}
\newtheorem*{note*}{Note}
\newtheorem*{notation*}{Notation}
\newtheorem*{question*}{Question}

\newtheorem*{questions*}{Questions}

\newcounter{UnorderedProofTempCtr}
\newcommand{\tempcommand}{}

\newcommand{\xdelta}{\bar X^\delta}

\newcommand{\ldt}{\text{l.d.t.}}

\newcommand{\ld}{\mathfrak{L}}

\newcommand{\dpsx}[2]{{#1} \langle \langle #2 \rangle \rangle }
\newcommand{\dpsxc}{\dpsx{\cc}{x}}

\newcommand{\polsub}[1]{($\text{Polynomial}_{#1}$)}

\newcommand{\Xxy}{\bar X_{(x,y)}}

\theoremstyle{definition} 

\begin{document}

\begin{abstract}
We present an effective criterion to determine if a normal analytic compactification of $\cc^2$ with one irreducible curve at infinity is algebraic or not. As a by product we establish a correspondence between normal algebraic compactifications of $\cc^2$ with one irreducible curve at infinity and algebraic curves contained in $\cc^2$ with one place at infinity. Using our criterion we construct pairs of homeomorphic normal analytic surfaces with minimally elliptic singularities such that one of the surfaces is algebraic and the other is not. Our main technical tool is the sequence of {\em key forms} - a `global' variant of the sequence of {\em key polynomials} introduced by MacLane \cite{maclane-key} to study valuations in the `local' setting - which also extends the notion of {\em approximate roots} of polynomials considered by Abhyankar-Moh \cite{abhya-moh-tschirnhausen}.
\end{abstract}

\maketitle

\section{Introduction} \label{sec-intro}
Algebraic compactifications of $\cc^2$ (i.e.\ compact algebraic surfaces containing $\cc^2$) are in a sense the simplest compact algebraic surfaces. The simplest among these are the {\em primitive compactifications}, i.e.\ those for which the complement of $\cc^2$ (a.k.a.\ the {\em curve at infinity}) is irreducible. It follows from a famous result of Remmert and Van de Ven that up to isomorphism, $\pp^2$ is the only {\em nonsingular} primitive compactification of $\cc^2$. In some sense a more natural category than nonsingular algebraic surfaces is the category of {\em normal} algebraic surfaces\footnote{This is true for example from the perspective of valuation theory: the irreducible components of the curve at infinity of a normal compactification $\bar X$ of $\cc^2$ correspond precisely to the discrete valuations on $\cc[x,y]$ which are {\em centered at infinity} with positive dimensional center on $X$. Therefore $\bar X$ is primitive and normal iff $\bar X$ corresponds to precisely one discrete valuation centered at infinity on $\cc[x,y]$}. In this article we tackle the problem of understanding the simplest normal algebraic compactifications of $\cc^2$:

\begin{bold-question} \label{compactification-question}
What are the normal primitive algebraic compactifications of $\cc^2$? 
\end{bold-question}

We give a complete answer to this question; in particular, we characterize both algebraic and {\em non-algebraic} primitive compactifications of $\cc^2$. Our answer is equivalent to an {\em explicit} criterion for determining {\em algebraicity} of (analytic) contractions of a class of curves: indeed, it follows from well known results of Kodaira, and independently of Morrow, that any normal analytic compactification $\bar X$ of $\cc^2$ is the result of contraction of a (possibly reducible) curve $E$ from a non-singular surface constructed from $\pp^2$ by a sequence of blow-ups. On the other hand, a well known result of Grauert completely and {\em effectively} characterizes all curves on a nonsingular analytic surface which can be {\em analytically} contracted: namely it is necessary and sufficient that the matrix of intersection numbers of the irreducible components of $E$ is negative definite. It follows that the question of understanding algebraicity of analytic compactifications of $\cc^2$ is 
equivalent to the following question:

\begin{bold-question} \label{contract-question}
Let $\pi: Y \to \pp^2$ be a birational morphism of nonsingular complex algebraic surfaces and $L \subseteq \pp^2$ be a line. Assume $\pi$ restricts to an isomorphism on $\pi^{-1}(\pp^2\setminus L)$. Let $E$ be the {\em exceptional divisor} of $\pi$ (i.e.\ $E$ is the union of curves on $Y$ which map to points in $\pp^2$) and $E_1, \ldots, E_N$ be irreducible curves contained in $E$. Let $E'$ be the union of the strict transform $L'$ (on $Y$) of $L$ and all components of $E$ {\em excluding} $E_1, \ldots, E_N$. Assume $E'$ is analytically contractible; let $\pi':Y \to Y'$ be the contraction of $E'$. When is $Y'$ algebraic?
\end{bold-question}
%
Question \ref{compactification-question} is equivalent to the $N=1$ case of Question \ref{contract-question}. We give a complete solution to this case of Question \ref{contract-question} (Theorem \ref{effective-answer}). Our answer is in particular {\em effective}, i.e.\ given a description of $Y$ (e.g.\ if we know a sequence of blow ups which construct $Y$ from $\pp^2$, or if we know precisely the discrete valuation $\nu$ on $\cc(x,y)$ associated to the unique curve on $Y\setminus \cc^2$ which does {\em not} get contracted), our algorithm determines in finite time if the contraction is algebraic. In fact the algorithm is a one-liner: ``Compute the {\em key forms} of $\nu$. $Y'$ is algebraic iff the last key form is a polynomial.'' The only previously known effective criteria for determining the algebraicity of contraction of curves on surfaces was the well-known criteria of Artin \cite{artractability} which states that a normal surface is algebraic if all its singularities are {\em rational}. We refer the reader to \cite{morrow-rossi}, \cite{brenton-algebraicity}, \cite{franco-lascu}, \cite{schroe-traction}, \cite{badescu-contractibility}, \cite{palka-Q1} for other criteria - some of these are more general, but none is effective in the above sense. Moreover, as opposed to Artin's criterion, ours is {\em not} numerical, i.e.\ it is not determined by numerical invariants of the associated singularities. We give an example (in Section \ref{non-example-section}) which shows that in fact there is {\em no topological},  let alone numerical, answer to Question \ref{contract-question} even for $N=1$.\\

As a corollary of our criterion, we establish a new correspondence between normal primitive algebraic compactifications of $\cc^2$ and algebraic curves in $\cc^2$ with {\em one place at infinity}\footnote{Let $C \subseteq \cc^2$ be an algebraic curve, and let $\bar C'$ be the closure of $C$ in $\pp^2$ and $\sigma:  \bar C \to \bar C'$ be the desingularization of $\bar C'$. $C$ has {\em one place at infinity} iff $|\sigma^{-1}(\bar C' \setminus C)| = 1$.} (Theorem \ref{geometric-answer}). Curves with one place at infinity have been extensively studied in affine algebraic geometry (see e.g.\ \cite{abhya-moh-tschirnhausen}, \cite{abhya-moh-line}, \cite{ganong}, \cite{russburger}, \cite{naka-oka}, \cite{suzuki}, \cite{wightwick}), and we believe the connection we found between these and compactifications of $\cc^2$ will be useful for the study of both\footnote{E.g.\ we use this connection in \cite{cpl} to solve completely the main problem studied in \cite{campillo-piltant-lopez-cones-surfaces}.}.  \\

Our main technical tool is the sequence of {\em key forms}, which is a direct analogue of the sequence {\em key polynomials} introduced by MacLane \cite{maclane-key}. The key polynomials were introduced (and have been extensively used - see e.g.\ \cite{moyls}, \cite{favsson-tree}, \cite{vaquie}, \cite{herrera-olalla-spivakovsky}) to study valuations in a {\em local} setting. However, our criterion shows how they retain information about the {\em global geometry} when computed in `global coordinates.'\\

The example in Section \ref{non-example-section} shows that algebraicity of $Y'$ from Question \ref{contract-question} can not be determined only from the (weighted) {\em dual graph} (Definition \ref{dual-defn}) of $E'$. However, at least when $N=1$, it is possible to completely characterize the weighted dual graphs (more precisely, {\em augmented and marked} weighted dual graphs - see Definition \ref{augmented-defn}) which correspond to {\em only algebraic} contractions, those which correspond to {\em only non-algebraic} contractions, and those which correspond to {\em both} types of contractions (Theorem \ref{graph-thm}). The characterization involves two sets of {\em semigroup conditions} \eqref{semigroup-criterion-1} and \eqref{semigroup-criterion-2}. We note that the first set of semigroup conditions \eqref{semigroup-criterion-1} are equivalent to the semigroup conditions that appear in the theory of plane curves with one place at infinity developed in \cite{abhya-moh-tschirnhausen}, \cite{abhyankar-expansion}, \cite{abhyankar-semigroup}, \cite{sathaye-stenerson}.\\

Finally we would like to point that Question \ref{compactification-question} is equivalent to a two dimensional {\em Cousin-type problem at infinity}: let $O_1, \ldots, O_N \in \pp^2\setminus \cc^2$ be {\em points at infinity}. Let $(u_j, v_j)$ be coordinates near $O_j$, $\psi_j(u_j) $ be a {\em Puiseux series} (Definition \ref{meromorphic-defn}) in $u_j$, and $r_j$ be a positive rational number, $1 \leq j \leq N$. 

\begin{bold-question} \label{cousin-question}
Determine if there exists a polynomial $f \in \cc[x,y]$ such that for each analytic branch $C$ of the curve $f = 0$ at infinity, there exists $j$, $1 \leq j \leq N$, such that 
\begin{itemize}
\item $C$ intersects $L_\infty$ at $O_j$,
\item  $C$ has a Puiseux expansion $v_j = \theta(u_j)$ at $O_j$ such that $\ord_{u_j}(\theta - \psi_j) \geq r_j$. 
\end{itemize}
\end{bold-question}

On our way to understand normal primitive compactifications of $\cc^2$, we solve the $N=1$ case of Question \ref{cousin-question} (Theorem \ref{cousin-answer}).

\begin{rem}
We use {\em Puiseux series} in an essential way in this article. However, instead of the usual Puiseux series, from Section \ref{firstground} onward, we almost exclusively work with {\em descending} Puiseux series (a descending Puiseux series in $x$ is simply a meromorphic Puiseux series in $x^{-1}$ - see Definition \ref{dpuiseux}). The choice was enforced on us `naturally' from the context - while key polynomials and Puiseux series are natural tools in the study of valuations in the local setting, when we need to study the relation of valuations corresponding to curves at infinity (on a compactification of $\cc^2$) to global properties of the surface, key forms and descending Puiseux series are sometimes more convenient.
\end{rem}

\subsection{Organization}
We start with an example in Section \ref{non-example-section} to illustrate that the answer to Question \ref{contract-question} can not be numerical or topological. The construction also serves as an example of non-algebraic normal {\em Moishezon surfaces}\footnote{\em{Moishezon surfaces} are analytic surfaces for which the fields of meromorphic functions have transcendence degree 2 over $\cc$} with the `simplest possible' singularities (see Remark \ref{simplest-remark}). In Section \ref{firstground} we recall some background material and in Section \ref{result-section} we state our results. The rest of the article is devoted to the proof of the results of Section \ref{result-section}. In Section \ref{secondground} we recall some more background material needed for the proof; in particular in Section \ref{essential-section} we state the algorithm to compute key forms of a valuation from the associated descending Puiseux series, and illustrate the algorithm via an elaborate example (we note that this algorithm is essentially the same as the algorithm used in \cite{lenny} for a different purpose). In Section \ref{prep-section} we build some tools for dealing with descending Puiseux series and in Section \ref{proof-section} we use these tools to prove the results from Section \ref{result-section}. The appendices contain proof of two lemmas from Section \ref{prep-section} - the proofs were relegated to the appendix essentially because of their length. 

\subsection{Acknowledgements}
This project started during my Ph.D.\ under Professor Pierre Milman to answer some of his questions, and profited enormously from his valuable suggestions and speaking in his seminars. I would like to thank Professor Peter Russell for inviting me to present this result and for his helpful remarks. The idea for presenting the `effective answer' in terms of key polynomials came from a remark of Tommaso de Fernex, and Mikhail Zaidenberg asked  the question of characterization of (non-)algebraic dual graphs during a poster presentation. I thank Mattias Jonsson and Mark Spivakovsky for bearing with my bugging at different stages of the write up, and Leonid Makar-Limanov for crucial encouragement during the (long) rewriting stage and pointing out the equivalence of the algorithm of \cite{lenny} and our Algorithm \ref{key-algorithm}. Finally I have to thank Dmitry Kerner for forcing me to think geometrically by his patient (and relentless) questions. Theorem \ref{graph-thm}, and the example from Section \ref{non-example-section} appeared in \cite{trento}. 


\section{Algebraic and non-algebraic compactifications with homeomorphic singularities} \label{non-example-section}

Let $(u,v)$ be a system of `affine' coordinates near a point $O \in \pp^2$ (`affine' means that both $u=0$ and $v=0$ are lines on $\pp^2$) and $L$ be the line $\{u=0\}$. Let $C_1$ and $C_2$ be curve-germs at $O$ defined respectively by $f_1 := v^5 - u^3$ and $f_2 := (v-u^2)^5 - u^3$. For each $i$, let $Y_i$ be the surface constructed by resolving the singularity of $C_i$ at $O$ and then blowing up $8$ more times the point of intersection of the (successive) strict transform of $C_i$ with the exceptional divisor. Let $E^*_i$ be the {\em last} exceptional curve, and $E'^{(i)}$ be the union of the strict transform $L'_i$ (on $Y_i$) of $L$ and (the strict transforms of) all exceptional curves except $E^*_i$. 

\begin{figure}[htp]

\centering
\subcaptionbox[]{%
    Weighted dual graph of $E'^{(i)}$%
    \label{fig:non-example-a}%
}
[%
    0.48\textwidth 
]%
{%
\begin{tikzpicture}[scale=1, font = \small] 

	\pgfmathsetmacro\factor{1.25}
 	\pgfmathsetmacro\dashedge{2*\factor}	
 	\pgfmathsetmacro\edge{.75*\factor}
 	\pgfmathsetmacro\vedge{.55*\factor}

 	\draw[thick] (-2*\edge,0) -- (\edge,0);
 	\draw[thick] (0,0) -- (0,-2*\vedge);
 	\draw[thick, dashed] (\edge,0) -- (\edge + \dashedge,0);
 	\draw[thick] (\edge + \dashedge,0) -- (2*\edge + \dashedge,0);
 	
 	\fill[black] ( - 2*\edge, 0) circle (3pt);
 	\fill[black] (-\edge, 0) circle (3pt);
 	\fill[black] (0, 0) circle (3pt);
 	\fill[black] (0, -\vedge) circle (3pt);
 	\fill[black] (0, - 2*\vedge) circle (3pt);
 	\fill[black] (\edge, 0) circle (3pt);
 	\fill[black] (\edge + \dashedge, 0) circle (3pt);
 	\fill[black] (2*\edge + \dashedge, 0) circle (3pt);
 	
 	\draw (-2*\edge,0)  node (e0up) [above] {$-1$};
 	\draw (-2*\edge,0 )  node (e0down) [below] {$L'_i$};
 	\draw (-\edge,0 )  node (e2up) [above] {$-3$};
 	\draw (-\edge,0 )  node [below] {$E_2$};
 	\draw (0,0 )  node (e4up) [above] {$-2$};	
 	\draw (0,0 )  node [below left] {$E_4$};
 	\draw (0,-\vedge )  node (down1) [right] {$-2$};
 	\draw (0,-\vedge )  node [left] {$E_3$};
 	\draw (0, -2*\vedge)  node (down2) [right] {$-3$};
 	\draw (0,-2*\vedge )  node [left] {$E_1$};
 	\draw (\edge,0)  node (e+1-up) [above] {$-2$};
 	\draw (\edge,0)  node [below] {$E_5$};
 	\draw (\edge + \dashedge,0)  node (e-last-1-up) [above] {$-2$};
 	\draw (\edge + \dashedge,0)  node [below] {$E_{10}$};
 	\draw (2*\edge + \dashedge,0)  node (e-last-up) [above] {$-2$};
 	\draw (2*\edge + \dashedge,0)  node [below] {$E_{11}$};
 	
 	\pgfmathsetmacro\factorr{.2}
 	\draw [thick, decoration={brace, mirror, raise=15pt},decorate] (\edge + \edge*\factorr,0) -- (\edge + \dashedge - \edge*\factorr,0);
 	\draw (\edge + 0.5*\dashedge,-\edge) node [text width= 2.5cm, align = center] (extranodes) {string of vertices of weight $-2$};
 			 	
\end{tikzpicture}
}
\subcaptionbox[]{%
    Weighted dual graph of the minimal resolution of the singularity of $Y'_i$%
    \label{fig:non-example-b}%
}
[%
    0.48\textwidth 
]%
{%
\begin{tikzpicture}[scale=1, font = \small] 

	\pgfmathsetmacro\factor{1.25}
	\pgfmathsetmacro\dashedge{2*\factor}	
	\pgfmathsetmacro\edge{.75*\factor}
	\pgfmathsetmacro\vedge{.55*\factor}

	\draw[thick] (-\edge,0) -- (\edge,0);
	\draw[thick] (0,0) -- (0,-2*\vedge);
	\draw[thick, dashed] (\edge,0) -- (\edge + \dashedge,0);
	\draw[thick] (\edge + \dashedge,0) -- (2*\edge + \dashedge,0);
	
	\fill[black] (-\edge, 0) circle (3pt);
	\fill[black] (0, 0) circle (3pt);
	\fill[black] (0, -\vedge) circle (3pt);
	\fill[black] (0, - 2*\vedge) circle (3pt);
	\fill[black] (\edge, 0) circle (3pt);
	\fill[black] (\edge + \dashedge, 0) circle (3pt);
	\fill[black] (2*\edge + \dashedge, 0) circle (3pt);
	
	\draw (-\edge,0 )  node (e2up) [above] {$-2$};
	\draw (-\edge,0 )  node [below] {$E_2$};
	\draw (0,0 )  node (e4up) [above] {$-2$};	
	\draw (0,0 )  node [below left] {$E_4$};
	\draw (0,-\vedge )  node (down1) [right] {$-2$};
	\draw (0,-\vedge )  node [left] {$E_3$};
	\draw (0, -2*\vedge)  node (down2) [right] {$-3$};
	\draw (0,-2*\vedge )  node [left] {$E_1$};
	\draw (\edge,0)  node (e+1-up) [above] {$-2$};
	\draw (\edge,0)  node [below] {$E_5$};
	\draw (\edge + \dashedge,0)  node (e-last-1-up) [above] {$-2$};
	\draw (\edge + \dashedge,0)  node [below] {$E_{10}$};
	\draw (2*\edge + \dashedge,0)  node (e-last-up) [above] {$-2$};
	\draw (2*\edge + \dashedge,0)  node [below] {$E_{11}$};
	
	\pgfmathsetmacro\factorr{.2}
	\draw [thick, decoration={brace, mirror, raise=15pt},decorate] (\edge + \edge*\factorr,0) -- (\edge + \dashedge - \edge*\factorr,0);
	\draw (\edge + 0.5*\dashedge,-\edge) node [text width= 2.5cm, align = center] (extranodes) {string of vertices of weight $-2$}; 			 	
\end{tikzpicture}
}%
\caption{Singularity of $Y'_i$}
\label{fig:non-example}
\end{figure}
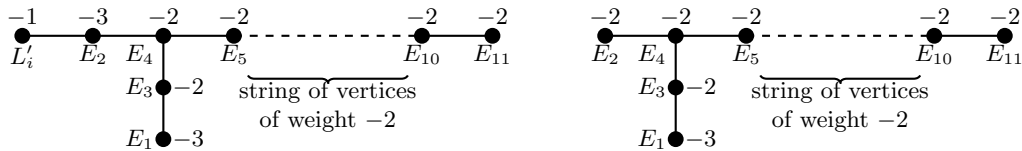

Note that the pairs of germs $(C_1,L)$ and $(C_2, L)$ are {\em isomorphic} via the map $(u,v) \mapsto (u,v+u^2)$. It follows that `weighted dual graphs' (Definition \ref{dual-defn}) of $E'^{(i)}$'s are {\em identical}; they are depicted in Figure \ref{fig:non-example-a} (we labeled the vertices according to the order of appearance of the corresponding curves in the sequence of blow-ups). It is straightforward to compute that the matrices of intersection numbers of the components of $E'^{(i)}$'s are negative definite, so that there is a bimeromorphic analytic map $Y_i \to Y'_i$ contracting $E'^{(i)}$. Note that each $Y'_i$ is a normal analytic surface with one singular point $P_i$. It follows from the construction that the weighted dual graphs of the minimal resolution of singularities of $Y'_i$ are identical (see Figure \ref{fig:non-example-b}), so that the numerical invariants of the singularities of $Y'_i$'s are also {\em identical}.\\

\begin{figure}[htp]
\centering

\includegraphics[height=3cm]{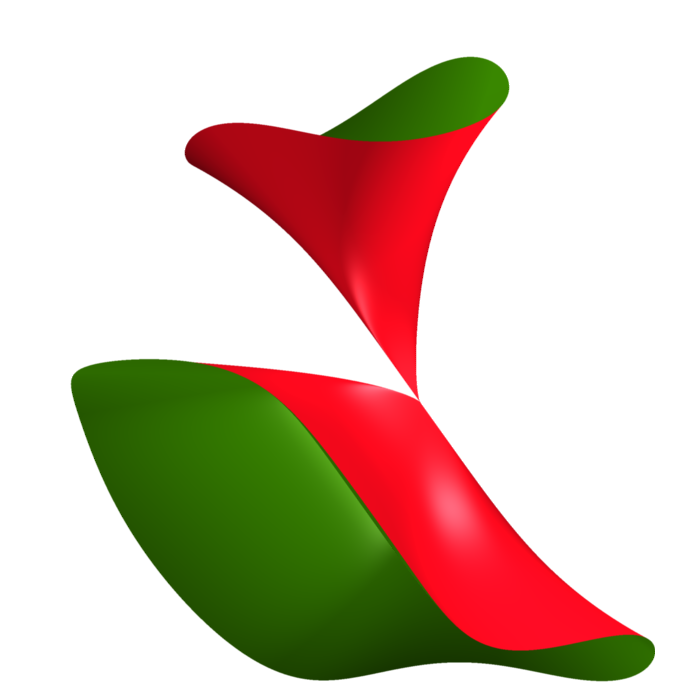}

\caption[]{The singularity of $z^2 = x^5 + xy^5$ at the origin ({\em  whirling dervish})}
\label{fig:dervish}
\end{figure}

In fact it follows (from e.g.\ \cite[Section 8]{neumann}) that the singularities of $Y'_i$'s are also {\em homeomorphic}. However, Theorem \ref{effective-answer} and Example \ref{non-key-example} imply that $Y'_1$ is algebraic, but $Y'_2$ is {\em not}.


\begin{rem} \label{simplest-remark}
It is straightforward to verify that the weighted dual graph of Figure \ref{fig:non-example-b} is precisely the graph labeled $D_{9,*,0}$ in \cite{laufer-elliptic}.
It then follows from \cite{laufer-elliptic} that the singularities at $P_i$ are {\em Goerenstein hypersurface singularities} of multiplicity $2$ and geometric genus $1$, which are also {\em minimally elliptic} (in the sense of \cite{laufer-elliptic}). Minimally elliptic Gorenstein singularities have been extensively studied (see e.g.\ \cite{yau}, \cite{ohyanagi}, \cite{nemethi-weakly-elliptic}), and in a sense they form the simplest class of non-rational singularities\footnote{Indeed, every connected proper subvariety of the exceptional divisor of the minimal resolution of a minimally elliptic singularity is the exceptional divisor of the minimal resolution of a rational singularity \cite{laufer-elliptic}.}. Since having only rational singularities imply algebraicity of the surface \cite{artractability}, it follows that the surface $Y'_2$ we constructed above is a normal non-algebraic Moishezon surface with the `simplest possible' singularity. 
\end{rem} 

It follows from \cite[Table 2]{laufer-elliptic} that the singularity at the origin of $z^2 = x^5 + xy^5$ (Figure \ref{fig:dervish}) is of the same type as the singularity of each $Y'_i$, $1 \leq i \leq 2$.

\section{Background I} \label{firstground}
Here we compile the background material needed to state the results. In Section \ref{secondground} we compile further background material that we use for the proof.

\begin{notation} \label{notation-x}
	Throughout the rest of the article we use $X$ to denote $\cc^2$ with coordinate ring $\cc[x,y]$ and $\Xxy$ to denote copy of $\pp^2$ such that $X$ is embedded into $\Xxy$ via the map $(x,y) \mapsto [1:x:y]$. We also denote by $L_\infty$ the {\em line at infinity} $\Xxy\setminus X$, and by $Q_y$ the point of intersection of $L_\infty$ and (closure of) the $y$-axis. Finally, if $\omega_0, \ldots, \omega_n$ are positive integers, we denote by $\pp^n(\omega_0, \ldots, \omega_n)$ the complex $n$-dimensional weighted projective space corresponding to weights $\omega_0, \ldots, \omega_n$. 
\end{notation}

\subsection{Meromorphic and descending Puiseux series}

\begin{defn}[Meromorphic Puiseux series] \label{meromorphic-defn}
	A {\em meromorphic Puiseux series} in a variable $u$ is a fractional power series of the form $\sum_{m \geq M} a_m u^{m/p}$ for some $m,M \in \zz$, $p  \geq 1$ and $a_m \in \cc$ for all $m \in \zz$. If all exponents of $u$ appearing in a meromorphic Puiseux series are positive, then it is simply called a {\em Puiseux series} (in $u$). Given a meromorphic Puiseux series $\phi(u)$ in $u$, write it in the following form:
	\begin{align*} 
	\phi(u) =  \cdots+ a_1 u^{\frac{q_1}{p_1}} + \cdots +  a_2 u^{\frac{q_2}{p_1p_2}} + \cdots 
	+ a_l u^{\frac{q_l}{p_1p_2 \cdots p_l}} + \cdots 
	\end{align*}
	where $q_1/p_1$ is the smallest non-integer exponent, and for each $k$, $1 \leq k \leq l$, we have that $a_k \neq 0$, $p_k \geq 2$, $\gcd(p_k, q_k) = 1$, and the exponents of all terms with order between $\frac{q_k}{p_1\cdots p_k}$ and $\frac{q_k}{p_1\cdots p_{k+1}}$ (or, if $k = l$, then all terms of order $> \frac{1}{p_1\cdots p_l}$) belong to $\frac{1}{p_1 \cdots p_{k}}\zz$. Then the pairs $(q_1, p_1), \ldots, (q_l, p_l)$, are called the {\em Puiseux pairs} of $\phi$ and the exponents $\frac{q_k}{p_1 \cdots p_k}$, $1 \leq k \leq l$, are called {\em characteristic exponents} of $\phi$. The {\em polydromy order} \cite[Chapter 1]{casas-alvero} of $\phi$ is $p := p_1\cdots p_l$, i.e.\ the polydromy order of $\phi$ is the smallest $p$ such that $\phi \in \cc((u^{1/p}))$. Let $\zeta$ be a primitive $p$-th root of unity. Then the {\em conjugates} of $\phi$ are
	\begin{align*}
	\phi_j(u) :=  \cdots+ a_1 \zeta^{jq_1p_2\cdots p_l} u^{\frac{q_1}{p_1}} + \cdots +  a_2 \zeta^{jq_2p_3\cdots p_l} u^{\frac{q_2}{p_1p_2}} + \cdots 
	+ a_l \zeta^{jq_l} u^{\frac{q_l}{p_1p_2 \cdots p_l}} + \cdots
	\end{align*}
	for $1 \leq j \leq p$ (i.e.\ $\phi_j$ is constructed by multiplying the coefficients of terms of $\phi$ with order $n/p$ by $\zeta^{jn}$).
\end{defn}

We recall the standard fact that the field of meromorphic Puiseux series in $u$ is the algebraic closure of the field $\cc((u))$ of Laurent polynomials in $u$:

\begin{thm} \label{puiseux-thm}
	Let $f \in \cc((u))[v]$ be an irreducible monic polynomial in $v$ of degree $d$. Then there exists a meromorphic Puiseux series $\phi(u)$ in $u$ of polydromy order $d$ such that 
	\begin{align*}
	f = \prod_{i=1}^d (v - \phi_i(u)),
	\end{align*}
	where $\phi_i$'s are conjugates of $\phi$. 
\end{thm}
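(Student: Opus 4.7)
The plan is to combine two ingredients: first, construction of a single meromorphic Puiseux series root of $f$ via the Newton--Puiseux algorithm; second, a Galois-theoretic identification of all roots as conjugates of that one.

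For the existence of a root, I would proceed as follows. After replacing $f$ by $u^N f$ for a suitable $N \geq 0$ (and rescaling back at the end) we may assume $f \in \cc[[u]][v]$ is monic of degree $d$ in $v$. Consider the Newton polygon of $f$ in the $(i,j)$-plane formed by the exponents of its nonzero monomials $c_{ij} u^i v^j$. Each compact edge of the lower convex hull has some rational slope $-q/p$ with $\gcd(p,q)=1$; picking a nonzero root $c\in\cc$ of the \emph{characteristic polynomial} associated to that edge and substituting $v = u^{q/p}(c+w)$ produces a polynomial $\tilde f \in \cc[[u^{1/p}]][w]$ whose Newton polygon has a strictly higher bottom. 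Iterating yields a formal series $\phi(u) = \sum_k c_k u^{\mu_k}$ with strictly increasing exponents $\mu_k$; one then shows, either by invoking Hensel's lemma as soon as the characteristic polynomial has a simple root, or by directly bounding the denominators introduced at each step (which is where irreducibility of $f$ is used), that the $\mu_k$ all lie in $\frac{1}{p'}\zz$ for a fixed integer $p'\leq d$. Thus $\phi \in \cc((u^{1/p'}))$ is a genuine meromorphic Puiseux series and $f(u,\phi(u))=0$.

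For the factorization, let $p$ denote the polydromy order of $\phi$, so that $\phi \in \cc((u^{1/p}))$ but $\phi \notin \cc((u^{1/q}))$ for any proper divisor $q$ of $p$. The extension $\cc((u))\subset\cc((u^{1/p}))$ is Galois of degree $p$ with cyclic Galois group generated by the substitution $u^{1/p}\mapsto \zeta u^{1/p}$, where $\zeta$ is a fixed primitive $p$-th root of unity. The orbit of $\phi$ under this Galois action is precisely $\{\phi_1,\ldots,\phi_p\}$ as defined in the statement, and minimality of $p$ forces these conjugates to be pairwise distinct. Hence the minimal polynomial of $\phi$ over $\cc((u))$ is $\prod_{j=1}^{p}(v-\phi_j)$, a monic polynomial of degree $p$ in $\cc((u))[v]$ that divides $f$. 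Since $f$ is irreducible and monic of degree $d$, we conclude $p=d$ and $f=\prod_{j=1}^{d}(v-\phi_j)$.

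The main technical obstacle is the bounded-denominator statement inside the Newton--Puiseux iteration: a priori, each substitution might introduce a fresh denominator, and one must rule out an unbounded tower. The cleanest way to handle this is to arrange, after the first few substitutions, a situation where the relevant characteristic polynomial has a simple root, so that Hensel's lemma produces the remainder of the expansion as a formal power series in a single variable $u^{1/p}$; the irreducibility of $f$ guarantees this eventually happens and pins down the common denominator to a divisor of $d$.
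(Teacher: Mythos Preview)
The paper does not prove this theorem; it is stated (and labeled) as a classical fact being recalled, namely that the field of meromorphic Puiseux series is the algebraic closure of $\cc((u))$. Your sketch is the standard Newton--Puiseux argument combined with the cyclic Galois action of $\cc((u^{1/p}))/\cc((u))$, and it is essentially correct; there is nothing to compare against in the paper itself.
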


\begin{defn}[Descending Puiseux Series] \label{dpuiseux}
	A {\em descending Puiseux series} in $x$ is a meromorphic Puiseux series in $x^{-1}$. The notions regarding meromorphic Puiseux series defined in Definition \ref{meromorphic-defn} extend naturally to the setting of descending Puiseux series. In particular, if $\phi(x)$ is a descending Puiseux series and the Puiseux pairs of $\phi(1/x)$ are $(q_1, p_1), \ldots, (q_l, p_l)$, then $\phi$ has Puiseux pairs $(-q_1, p_1), \ldots, (-q_l, p_l)$, polydromy order $p := p_1 \cdots p_l$, and characteristic exponents $-q_k/(p_1 \cdots p_k)$ for $1 \leq k \leq l$.   
\end{defn}

\begin{notation} \label{descending-notation}
We use $\dpsxc$ to denote the field of descending Puiseux series in $x$. For $\phi \in \dpsxc$ and $r \in \rr$, we denote by $[\phi]_{> r}$ the {\em descending Puiseux polynomial} (i.e.\ descending Puiseux series with finitely many terms) consisting of all terms of $\phi$ of degree $> r$. If $\psi$ is also in $\dpsxc$, then we write $\phi \equiv_r \psi$ iff $[\phi]_{>r} = [\psi]_{> r}$ iff $\deg_x(\phi- \psi) \leq r$. 
\end{notation}

The following is an immediate Corollary of Theorem \ref{puiseux-thm}:

\begin{thm} \label{dpuiseux-factorization}
	Let $f \in \cc[x,x^{-1}, y]$. Then there are (up to conjugacy) unique descending Puiseux series $\phi_1, \ldots, \phi_k$ in $x$, a unique non-negative integer $m$ and $c \in \cc^*$ such that
	$$f = cx^m \prod_{i=1}^k \prod_{\parbox{1.75cm}{\scriptsize{$\phi_{ij}$ is a con\-ju\-ga\-te of $\phi_i$}}}\mkern-27mu \left(y - \phi_{ij}(x)\right)$$
\end{thm}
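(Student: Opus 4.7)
The strategy is to translate Theorem \ref{puiseux-thm} directly via the substitution $u = x^{-1}$. The key observation is that, by Definition \ref{dpuiseux}, the field $\dpsxc$ of descending Puiseux series in $x$ is literally the field of meromorphic Puiseux series in $x^{-1}$, and by Theorem \ref{puiseux-thm} (applied to general polynomials over $\cc((x^{-1}))$) this field is algebraically closed. Hence every $f \in \cc[x, x^{-1}, y]$, viewed as a polynomial in $y$, should split into linear factors over $\dpsxc$, and grouping these linear factors by Galois conjugacy should recover the claimed product.

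First I would embed $\cc[x, x^{-1}, y] \hookrightarrow \cc((x^{-1}))[y]$ by rewriting each Laurent polynomial in $x$ as a finite Laurent series in $x^{-1}$. Write $f = \sum_{j=0}^{d} a_j(x)\, y^j$ with $a_d(x) \neq 0$ and identify $cx^m$ as the leading coefficient $a_d(x)$ (so that $c \in \cc^*$ is its leading coefficient in $x$ and $m \in \zz_{\geq 0}$ is its $x$-degree; this pins down $c$ and $m$ uniquely). Then I would invoke the UFD property of $\cc((x^{-1}))[y]$ to write
\[
    f = a_d(x)\cdot g_1(x,y) \cdots g_k(x,y),
\]
where each $g_i$ is monic and irreducible in $y$ over $\cc((x^{-1}))$, with the factors unique up to ordering.

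Next I would apply Theorem \ref{puiseux-thm} with $u = x^{-1}$ to each irreducible factor $g_i \in \cc((x^{-1}))[y]$: this produces a meromorphic Puiseux series in $x^{-1}$—equivalently, a descending Puiseux series $\phi_i \in \dpsxc$—of polydromy order $d_i := \deg_y g_i$ such that
\[
    g_i(x,y) = \prod_{j=1}^{d_i}\bigl(y - \phi_{ij}(x)\bigr),
\]
where $\phi_{i1}, \ldots, \phi_{id_i}$ are the conjugates of $\phi_i$. The conjugacy class of $\phi_i$ is uniquely determined by $g_i$ (this is the uniqueness clause of Theorem \ref{puiseux-thm}). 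Substituting these into the factorization of $f$ yields the claimed formula. Uniqueness of the unordered tuple $(\phi_1, \ldots, \phi_k)$ up to conjugacy then follows from combining (a) uniqueness of the irreducible factorization in the UFD $\cc((x^{-1}))[y]$ with (b) the uniqueness statement in Theorem \ref{puiseux-thm} applied factor by factor.

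No substantial obstacle is anticipated; the content is almost entirely bookkeeping between the ``ascending'' variable $u$ and the ``descending'' variable $x = u^{-1}$. The only minor point to keep in mind is that the leading coefficient $a_d(x)$ needs to be read off as the monomial $cx^m$, which is where both the constant $c \in \cc^*$ and the exponent $m \in \zz_{\geq 0}$ in the statement come from, and which is manifestly unique once $f$ is fixed.
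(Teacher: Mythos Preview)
Your approach is exactly the paper's: the theorem is stated there as ``an immediate corollary of Theorem~\ref{puiseux-thm},'' and your reduction via $u = x^{-1}$ is precisely that corollary spelled out.

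One small gap worth noting: you write that the leading $y$-coefficient $a_d(x) \in \cc[x,x^{-1}]$ is the monomial $cx^m$, but for a general $f \in \cc[x,x^{-1},y]$ this coefficient need not be a monomial (e.g.\ $f = (x+1)y + 1$). Since the product $\prod_{i,j}(y - \phi_{ij}(x))$ is monic in $y$, the displayed factorization literally forces $a_d(x) = cx^m$; so as stated the theorem only holds for $f$ whose leading $y$-coefficient is a single Laurent monomial with non-negative exponent. This is a looseness in the statement rather than a flaw in your argument, and in every application in the paper (key forms, $f_\phi$, etc.) the relevant $f$ is monic in $y$, so the issue never arises. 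You could simply add the hypothesis that $a_d(x) = cx^m$ with $m \geq 0$, or note that the general case yields $f = a_d(x)\prod_{i,j}(y-\phi_{ij}(x))$ with $a_d(x)\in\cc[x,x^{-1}]$.
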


\subsection{Divisorial discrete valuation and semidegree} \label{divisorial-section}
Let $\sigma:\tilde Y \dashrightarrow Y$ be a birational correspondence of normal complex algebraic surfaces and $C$ be an irreducible analytic curve on $\tilde Y$. Then the local ring $\sheaf_{\tilde Y ,C}$ of $C$ on $\tilde Y$ is a discrete valuation ring. Let $\nu$ be the associated valuation on the field $\cc(Y)$ of rational functions on $Y$; in other words $\nu$ is the order of vanishing along $C$. We say that $\nu$ is a {\em divisorial discrete valuation} on $\cc(Y)$; the {\em center} of $\nu$ on $Y$ is $\sigma(C\setminus S)$, where $S$ is the set of points of indeterminacy of $\sigma$ (the normality of $Y$ ensures that $S$ is a discrete set, so that $C \setminus S \neq \emptyset$). Moreover, if $U$ is an open subset of $Y$, we say that $\nu$ is {\em centered at infinity} with respect to $U$ iff $\sigma(C\setminus S) \subseteq Y\setminus U$. 

\begin{defn}[Semidegree] \label{semi-defn}
Let $U$ be an affine variety and $\nu$ be a divisorial discrete valuation on the ring $\cc[U]$ of regular functions on $U$ which is centered at infinity with respect to $U$. Then we say that $\delta := -\nu$ is a {\em semidegree} on $\cc[U]$.
\end{defn}

The following result, which connects semidegrees on $\cc[x,y]$ with descending Puiseux series in $x$ is a reformulation of \cite[Proposition 4.1]{favsson-tree}.

\begin{thm} \label{semidescending}
Let $\delta$ be a semidegree on $\cc[x,y]$. Assume that $\delta(x) > 0$. Then there is a {\em descending Puiseux polynomial} (i.e.\ a descending Puiseux series with finitely many terms) $\phi_\delta(x)$ (unique up to conjugacy) in $x$ and a (unique) rational number $r_\delta <  \ord_x(\phi_\delta)$ such that for every $f \in \cc[x,y]$, 
\begin{align}
\delta(f) = \delta(x)\deg_x(f(x, \phi_\delta(x) + \xi x^{r_\delta})), \label{phi-delta-defn}
\end{align}
where $\xi$ is an indeterminate. 
\end{thm}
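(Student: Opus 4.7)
The plan is to deduce the theorem from \cite[Prop.~4.1]{favsson-tree}, which describes divisorial valuations on $\cc[[u,v]]$ centered at the origin in terms of Puiseux polynomials, by inverting $x$ (substituting $u=1/x$). I would proceed as follows.

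First, extend $\delta$ to $\dpsxc[y]$. Since $\delta(x)>0$, the restriction of $-\delta$ to $\cc[x^{-1}]$ is $\delta(x)\cdot\ord_{x^{-1}}$, so $-\delta$ extends uniquely to the algebraic closure $\dpsxc$ of $\cc((x^{-1}))$ provided by Theorem \ref{puiseux-thm}; on a descending Puiseux polynomial $\phi$ one simply has $\delta(\phi)=\delta(x)\deg_x\phi$. Then $\delta$ extends uniquely to $\dpsxc[y]$, since $y$ is transcendental over $\dpsxc$. Next, using Theorem \ref{dpuiseux-factorization}, write any $f\in\cc[x,y]$ as $f=cx^m\prod_{i,j}(y-\phi_{ij}(x))$ with $\phi_{ij}$ ranging over the conjugates of finitely many descending Puiseux series $\phi_1,\dots,\phi_k$; then
\begin{equation*}
	\delta(f)\;=\;m\,\delta(x)\,+\,\sum_{i,j}\delta\bigl(y-\phi_{ij}(x)\bigr),
\end{equation*}
so everything reduces to understanding the function $g(\phi):=\delta(y-\phi(x))$ on $\dpsxc$.

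The main structural claim --- the content of \cite[Prop.~4.1]{favsson-tree} transported across $x\leftrightarrow 1/u$ --- is that there is a descending Puiseux polynomial $\phi_\delta$ and a rational $r_\delta<\ord_x\phi_\delta$ such that
\begin{equation*}
	g(\phi)\;=\;\delta(x)\cdot\max\{\deg_x(\phi_\delta-\phi),\;r_\delta\}\qquad\text{for every }\phi\in\dpsxc.
\end{equation*}
I would prove this by building $\phi_\delta$ term-by-term. Starting from $\phi^{(0)}:=0$, and having built $\phi^{(k)}$, examine $g(\phi^{(k)}+cx^s)$ as $(c,s)$ ranges over $\cc^*\times(-\infty,\ord_x\phi^{(k)})$: the non-archimedean triangle inequality makes $g$ tropical in $s$, so either (i) there is a unique maximizing pair $(c_0,s_0)$ strictly improving on all smaller $s$, in which case append $c_0 x^{s_0}$ and iterate; or (ii) $g$ plateaus, i.e.\ $g(\phi^{(k)}+cx^s)=r\,\delta(x)$ for all admissible $c$ and all sufficiently small $s$, in which case set $\phi_\delta:=\phi^{(k)}$ and $r_\delta:=r$. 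The process must terminate after finitely many steps because $\delta$ is divisorial, so its value group is a finitely generated subgroup of $\qq$; this bounds the denominators of the characteristic exponents of the $\phi^{(k)}$'s, and Abhyankar-type rank bounds then cap their number.

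Plugging this structural formula into the factorization above gives, generically in the indeterminate $\xi$,
\begin{equation*}
	\deg_x\bigl(\phi_\delta(x)+\xi x^{r_\delta}-\phi_{ij}(x)\bigr)\;=\;\max\{\deg_x(\phi_\delta-\phi_{ij}),\,r_\delta\}\;=\;g(\phi_{ij})/\delta(x),
\end{equation*}
hence $\delta(f)=\delta(x)\deg_x f(x,\phi_\delta(x)+\xi x^{r_\delta})$, as claimed. Uniqueness up to conjugacy is immediate from the formula: $r_\delta\delta(x)$ is the maximum value of $g$, and the conjugates of $\phi_\delta$ are precisely the descending Puiseux series on which that maximum is attained. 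The main obstacle is the structural/tropical analysis of $g$ producing $\phi_\delta$ and $r_\delta$ together with the finite-termination argument; everything before and after that step is routine bookkeeping with the factorization of Theorem \ref{dpuiseux-factorization}.
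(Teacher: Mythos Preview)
Your proposal is correct and follows precisely the paper's approach: the paper does not give an independent proof but simply states the theorem as ``a reformulation of \cite[Proposition 4.1]{favsson-tree}'', i.e.\ the local Puiseux description of divisorial valuations transported via $u=1/x$. Your write-up supplies more detail than the paper does (the factorization bookkeeping and the term-by-term construction of $\phi_\delta$), but the underlying idea is identical.
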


\begin{defn} \label{generic-descending-defn}
	In the situation of Theorem \ref{semidescending}, we say that $\tilde \phi_\delta(x,\xi):= \phi_\delta(x) + \xi x^{r_\delta}$ is the {\em generic descending Puiseux series} associated to $\delta$. Moreover, if $\bar X$ is an analytic compactification of $X = \cc^2$ and $Z \subseteq \bar X \setminus \cc^2$ is a curve at infinity such that $\delta$ is the order of pole along $Z$, then we also say that $\tilde \phi_\delta(x,\xi)$ is the {\em generic descending Puiseux series associated to $Z$}.
\end{defn}

\begin{example} \label{wt-depuiseux}
	If $\delta$ is a weighted degree in $(x,y)$-coordinates corresponding to weights $p$ for $x$ and $q$ for $y$ with $p,q$ positive integers, then the generic descending Puiseux series corresponding to $\delta$ is $\tilde \phi_\delta = \xi x^{q/p}$. Note that if we embed $\cc^2 = \spec\cc[x,y]$ into the weighted projective space $\pp^2(1,p,q)$ via $(x,y) \mapsto [1:x:y]$, then $\delta$ is precisely the order of the pole along the curve at infinity.
\end{example}

\begin{example} \label{non-descending-example}
Recall the set up of the example from Section \ref{non-example-section}. Then $C_i$'s have Puiseux expansions $v = \psi_i(u)$ at $O$, where 
\begin{align*}
\psi_1(u) &= u^{3/5},\\
\psi_2(u) &= u^{3/5} + u^2.
\end{align*}
Now note that $(x,y) := (1/u,v/u)$ are coordinates on $\pp^2\setminus L \cong \cc^2$, and with respect to $(x,y)$ coordinates the $C_1$ has a {\em descending} Puiseux expansion of the form $y = x\psi_1(1/x) = x^{2/5}$. Similarly, $C_2$ has a descending Puiseux expansion of the form $y = x\psi_2(1/x) = x^{2/5} + x^{-1}$. Let $\delta_i$ be the order of pole along $E^*_i$, $1 \leq i \leq 2$. Then the generic descending Puiseux series corresponding to $\delta_1$ and $\delta_2$ are respectively of the form
\begin{align}
\begin{split}
\tilde \phi_{\delta_1}(x,\xi_1) &= x^{2/5} + \xi_1 x^{-6/5}, \\
\tilde \phi_{\delta_2}(x,\xi_2) &= x^{2/5} + x^{-1} + \xi_2 x^{-6/5}. 
\end{split} \label{non-phi-delta}
\end{align} 
\end{example}

\begin{defn}[Formal Puiseux pairs of generic descending Puiseux series] \label{formal-defn}
	Let $\delta$ and $\tilde \phi_\delta(x,\xi):= \phi_\delta(x) + \xi x^{r_\delta}$ be as in Definition \ref{generic-descending-defn}. Let the Puiseux pairs of $\phi_\delta$ be $(q_1, p_1), \ldots, (q_l,p_l)$. Express $r_\delta$ as $q_{l+1}/(p_1 \cdots p_lp_{l+1})$ where $p_{l+1} \geq 1$ and $\gcd(q_{l+1}, p_{l+1}) = 1$. Then the {\em formal Puiseux pairs} of $\tilde \phi_\delta$ are $(q_1, p_1), \ldots, (q_{l+1},p_{l+1})$, with $(q_{l+1}, p_{l+1})$ being the {\em generic} formal Puiseux pair. Note that 
	\begin{enumerate}
		\item $\delta(x) = p_1 \cdots p_{l+1}$,
		\item it is possible that $p_{l+1} = 1$ (as opposed to other $p_k$'s, which are always $\geq 2$). 
	\end{enumerate}
\end{defn}

\subsection{Geometric interpretation of generic descending Puiseux series}
In this subsection we recall from \cite{sub2-1} the geometric interpretation of generic descending Puiseux series. We use the notations introduced in Notation \ref{notation-x}.
\begin{defn}
An {\em irreducible analytic curve germ at infinity} on $X$ is the image $\gamma$ of an analytic map $h$ from a punctured neighborhood $\Delta'$ of the origin in $\cc$ to $X$ such that $|h(s)| \to \infty$ as $|s| \to 0$ (in other words, $h$ is analytic on $\Delta'$ and has a pole at the origin). If $\bar X$ is an analytic compactification of $X$, then there is a unique point $P \in \bar X \setminus X$ such that $|h(s)| \to P$ as $|s| \to 0$. We call $P$ the {\em center} of $\gamma$ on $\bar X$, and write $P = \lim_{\bar X} \gamma$. 
\end{defn}

Let $\bar X$ be a primitive normal analytic compactification of $X$ with an irreducible curve $C_\infty$ at infinity. Let $\sigma: \Xxy\dashrightarrow \bar X$ be the natural bimeromorphic map, and let $Y$ be a {\em resolution of indeterminacies} of $\sigma$, i.e.\ $Y$ is a non-singular rational surface equipped with analytic maps $\pi: Y \to \Xxy$ and $\pi':Y \to \bar X$ such that $\pi' = \sigma \circ \pi$. Let $L'_\infty$ be the strict transform of $L_\infty \subseteq \Xxy$ on $Y$ and $Q'_y \in L'_\infty$ be (the unique point) such that $\pi(Q'_y) = Q_y$. Let $P_\infty := \pi'(Q'_y) \in C_\infty$. 

\begin{prop}[{\cite[Proposition 3.5]{sub2-1}}] \label{prop-param}
Let $\delta$ be the order of pole along $C_\infty$, $\tilde \phi_\delta(x,\xi)$ be the generic descending Puiseux series associated to $\delta$ and $\gamma$ be an irreducible analytic curve germ on $X$. Then $ \lim_{\bar X} \gamma \in C_\infty\setminus\{P_\infty\}$ iff $\gamma$ has a parametrization of the form 
\begin{align}
t \mapsto (t, \tilde \phi_\delta(t,\xi)|_{\xi = c} + \ldt)\quad \text{for}\ |t| \gg 0 \label{dpuiseux-param} \tag{$*$}
\end{align}
for some $c \in \cc$, where $\ldt$ means `lower degree terms' (in $t$).
\end{prop}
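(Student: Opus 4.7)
My plan is to transfer the question to the resolution $Y$, where $\pi'$ is a morphism, and then to use the defining identity of $\tilde\phi_\delta$ (Theorem \ref{semidescending}) to parametrize a dense open subset of the unique non-contracted component of $\pi'^{-1}(C_\infty)$. Let $E^* \subseteq Y$ denote the strict transform of $C_\infty$ under $\pi'$; since $\bar X$ is primitive and normal, $E^*$ is the unique irreducible component of $\pi'^{-1}(C_\infty)$ on which $\pi'$ is not contracting, $\pi'|_{E^*}: E^* \to C_\infty$ is birational, and $\delta$ coincides with the order of vanishing along $E^*$ (because $\pi'$ is an isomorphism near the generic point of $E^*$). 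Lifting $\gamma$ to its strict transform $\tilde\gamma$ on $Y$ gives $\lim_{\bar X}\gamma = \pi'(\lim_Y \tilde\gamma)$, so the condition $\lim_{\bar X}\gamma \in C_\infty \setminus \{P_\infty\}$ is equivalent to $\lim_Y \tilde\gamma \in E^* \setminus \pi'^{-1}(P_\infty)$. This is a Zariski-open subset of $E^*$ that in particular excludes $E^* \cap L'_\infty$ and the intersections of $E^*$ with any other components of $\pi^{-1}(L_\infty)$ contracted by $\pi'$ to $P_\infty$.

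For each $c \in \cc$, let $\Gamma_c$ be the analytic curve germ at infinity on $X$ parametrized by one branch of $t \mapsto (t, \tilde\phi_\delta(t, c))$ for $|t| \gg 0$. The identity $\delta(f) = \delta(x)\deg_x f(x, \tilde\phi_\delta(x,\xi))$ from Theorem \ref{semidescending}, specialized to $\xi = c$, controls the intersection behaviour of $\Gamma_c$ with every polynomial curve at infinity. Tracing this through the tower of blow-ups $\pi : Y \to \pp^2$ that realizes $\delta$ as the order of vanishing along a divisor --- whose final exceptional curve is $E^*$ --- I would show that the strict transform of $\Gamma_c$ on $Y$ meets $E^*$ transversely at a single point $q(c)$, and that $c \mapsto q(c)$ is a holomorphic injection $\cc \hookrightarrow E^*$ whose image equals $E^*$ minus its intersections with the other components of $\pi^{-1}(L_\infty)$, and in particular contains all of $E^* \setminus \pi'^{-1}(P_\infty)$. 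The induction along the blow-up sequence is governed by the Puiseux pairs of $\phi_\delta$, and the generic formal Puiseux pair $(q_{l+1}, p_{l+1})$ of Definition \ref{formal-defn} captures the last blow-up producing $E^*$; this is what makes $\xi = c$ the local coordinate on $E^*$ transverse to the blow-up tower.

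Both implications of the proposition then follow. For the ($\Leftarrow$) direction, a germ with parametrization $(t, \tilde\phi_\delta(t,c) + \ldt)$ shares its leading descending Puiseux behaviour with $\Gamma_c$; the $\ldt$ correspond to terms of strictly larger $\delta$-value than the generic term $\xi x^{r_\delta}$, and therefore do not shift the limit of the lift on $Y$, giving $\lim_Y \tilde\gamma = q(c)$ and hence $\lim_{\bar X}\gamma = \pi'(q(c)) \in C_\infty \setminus \{P_\infty\}$. For the ($\Rightarrow$) direction, the limit $\lim_Y \tilde\gamma \in E^* \setminus \pi'^{-1}(P_\infty)$ equals $q(c)$ for a unique $c$ by the bijection above, and matching the descending Puiseux expansions of $\gamma$ and $\Gamma_c$ at their common limit point --- using that the coefficient of $x^{r_\delta}$ separates nearby points of $E^*$ --- forces $\gamma$ to have the stated form. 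The main obstacle I foresee is the inductive verification of the preceding paragraph, namely that the parameter $c$ really behaves as a local coordinate on $E^*$ at $q(c)$. This should ultimately reduce to the minimality clause $r_\delta < \ord_x(\phi_\delta)$ in Theorem \ref{semidescending} combined with a careful comparison of the successive Puiseux truncations of $\tilde\phi_\delta$ with the centers of $\delta$ on the intermediate blow-ups.
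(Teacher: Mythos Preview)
The paper does not prove this proposition; it is quoted from \cite{sub2-1}, so there is no in-paper argument to compare against. Your overall strategy---lift to a resolution $Y$, identify the parameter $c$ with a coordinate on the strict transform $E^*$ of $C_\infty$, and read off the descending Puiseux expansion---is the natural one and is essentially how such statements are proved.

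There is, however, a genuine gap in your reduction step. You assert that
\[
\lim_{\bar X}\gamma \in C_\infty\setminus\{P_\infty\}\ \Longleftrightarrow\ \lim_Y\tilde\gamma \in E^*\setminus\pi'^{-1}(P_\infty),
\]
but the forward implication can fail. The contracted curve $E' = \pi'^{-1}(C_\infty)\setminus E^*$ need not be connected: in the dual graph of the minimal $\pp^2$-dominating resolution (Figure~\ref{fig:graph1}, the case $p_{l+1}>1$), $E^*$ has valence two, with one neighbour lying on the chain toward $L'_\infty$ and the other on a separate ``vertical'' branch. That vertical branch is contracted by $\pi'$ to a point $P'\in C_\infty$ with $P'\neq P_\infty$. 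A germ $\gamma$ whose lift $\tilde\gamma$ lands on that branch (and not on $E^*$) still satisfies $\lim_{\bar X}\gamma = P'\in C_\infty\setminus\{P_\infty\}$, so your equivalence misses it. Consequently your bijection $c\mapsto q(c)$ onto an open subset of $E^*$ does not by itself cover the $(\Rightarrow)$ direction.

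The fix is not hard but needs to be made explicit: such germs have a parametrization $(t,\phi_\delta(t)+\ldt)$ with the ``l.d.t.'' beginning strictly below $x^{r_\delta}$, which is exactly \eqref{dpuiseux-param} with $c=0$. Equivalently, one should show directly that the descending Puiseux expansion $y=\psi(x)$ of any germ at infinity satisfies $[\psi]_{>r_\delta}=\phi_\delta$ iff its center on $Y$ lies outside the connected component of $E'$ containing $L'_\infty$, rather than routing the argument through the (too-small) set $E^*\setminus\pi'^{-1}(P_\infty)$. Once this is in place, the rest of your outline goes through.
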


\begin{reminition} \label{P^2-center}
We call $P_\infty$ {\em a center of $\pp^2$-infinity on $\bar X$.} $P_\infty$ is in fact {\em unique} in the case of `generic' primitive normal compactifications of $\cc^2$ (we do not use this uniqueness in this article, so we state it without a proof):
\begin{itemize}
\item If $\bar X \cong \pp^2(1,1,q)$ for some $q > 0$, then {\em every} point of $C_\infty$ is a center of $\pp^2$-infinity on $\bar X$.
\item If $\bar X \cong \pp^2(1,p,q)$ for some $p,q > 1$, then $\bar X$ has {\em two} singular points, and these are precisely the centers of $\pp^2$-infinity on $\bar X$.
\item In all other cases, there is a {\em unique} center of $\pp^2$-infinity on $\bar X$ - it is precisely the unique point on $\bar X$ which has a non-quotient singularity.
\end{itemize}
\end{reminition}

\subsection{Key forms of a semidegree}
Let $\delta$ be a semidegree on $\cc[x,y]$ such that $\delta(x) > 0$. Pick $k > 1$ such that $\delta(y/x^k) < 0$. Set $(u,v) := (1/x,y/x^k)$. Then $\nu := -\delta$ is a discrete valuation on $\cc[u,v]$ which is {\em centered at the origin}. It follows that $\nu$ can be completely described in terms of a finite sequence of {\em key polynomials} in $(u,v)$ \cite{maclane-key}. The {\em key forms} of $\delta$ that we introduce in this section are precisely the analogue of key polynomials of $\nu$. We refer to \cite[Chapter 2]{favsson-tree} for the properties of key polynomials that we used as a model for our definition of key forms:


\begin{defn}[Key forms] \label{key-defn}
Let $\delta$ be a semidegree on $\cc[x,y]$ such that $\delta(x) > 0$. A sequence of elements $g_0, g_1, \ldots, g_{n+1} \in \cc[x,x^{-1},y]$ is called the sequence of {\em key forms} for $\delta$ if the following properties are satisfied with $\eta_j := \delta(g_j)$, $0 \leq j \leq n+1$: 
\begin{compactenum}
\let\oldenumi\theenumi
\renewcommand{\theenumi}{P\oldenumi}
\addtocounter{enumi}{-1}
\item \label{semigroup-property} $\eta_{j+1} < \alpha_j \eta_j = \sum_{i = 0}^{j-1}\beta_{j,i}\eta_i$ for $1 \leq j \leq n$, where 
\begin{compactenum}
\item $\alpha_j = \min\{\alpha \in \zz_{> 0}: \alpha\eta_j \in \zz \eta_0 + \cdots + \zz \eta_{j-1}\}$ for $1 \leq j \leq n$,
\item $\beta_{j,i}$'s are integers such that $0 \leq \beta_{j,i} < \alpha_i$ for $1 \leq i < j \leq n$ (in particular, only $\beta_{j,0}$'s are allowed to be negative). 
\end{compactenum}
\item $g_0 = x$, $g_1 = y$.
\item \label{next-property} For $1 \leq j \leq n$, there exists $\theta_j \in \cc^*$ such that 
\begin{align*}
g_{j+1} = g_j^{\alpha_j} - \theta_j g_0^{\beta_{j,0}} \cdots g_{j-1}^{\beta_{j,j-1}}.
\end{align*}

\item \label{generating-property} Let $z_1, \ldots, z_{n+1}$ be indeterminates and $\eta$ be the {\em weighted degree} on $B := \cc[x,x^{-1},z_1, \ldots, z_{n+1}]$ corresponding to weights  $\eta_0$ for $x$ and $\eta_j$ for $z_j$, $1 \leq j \leq n+1$ (i.e.\ the value of $\eta$ on a polynomial is the maximum `weight' of its monomials). Then for every polynomial $g \in \cc[x,x^{-1},y]$, 
\begin{align}
\delta(g) = \min\{\eta(G): G(x,z_1, \ldots, z_{n+1}) \in B,\ G(x,g_1, \ldots, g_{n+1}) = g\}. \label{generating-eqn}
\end{align}
\end{compactenum} 
\end{defn}
The properties of key forms of semidegrees compiled in the following theorem are straightforward analogues of corresponding (standard) properties of key polynomials of valuations.

\begin{thm} \label{key-thm}
\mbox{}
\begin{enumerate}
\item Every semidegree $\delta$ on $\cc[x,y]$ such that $\delta(x) > 0$ has a unique and finite sequence of key forms.
\item \label{key-to-degree} Conversely, given $g_0, \ldots, g_{n+1} \in \cc[x,x^{-1},y]$ and integers $\eta_0, \ldots, \eta_{l+1}$ with $\eta_0 > 0$ which satisfy properties \eqref{semigroup-property}--\eqref{next-property}, there is a unique semidegree $\delta$ on $\cc[x,y]$ such that $g_j$'s are key forms of $\delta$ and $\eta_j = \delta(g_j)$, $0 \leq j \leq n+1$.
\item Recall Notation \ref{notation-x}. Assume $\sigma: \bar X^* \to \Xxy$ is a composition of point blow-ups and $E^* \subseteq \bar X^*$ is an exceptional curve of $\sigma$. Let $\delta$ be the order of pole along $E^*$. Assume $\delta(x) > 0$. Then the following data are equivalent (i.e.\ given any one of them, there is an explicit algorithm to construct the others in finite time):
\begin{enumerate}
\item a minimal sequence of points on successive blow-ups of $\Xxy$ such that $\sigma$ factors through the composition of these blow-ups and $E^*$ is the strict transform of the exceptional curve of the last blow-up.
\item a generic descending Puiseux series of $\delta$.
\item the sequence of key forms of $\delta$. 
\end{enumerate}
\item \label{last-expansion} Let $\delta$ be a semidegree on $\cc[x,y]$ such that $\delta(x) > 0$. Let $\tilde \phi_\delta(x,\xi):= \phi_\delta(x) + \xi x^{r_\delta}$ be the generic descending Puiseux series and $g_{n+1}$ be the last key form of $\delta$. Then the descending Puiseux factorization of $g_{n+1}$ is of the form
\begin{align*}
g_{n+1} = \prod_{\parbox{1.5cm}{\scriptsize{$\psi_{j}$ is a con\-ju\-ga\-te of $\psi$}}}\mkern-18mu \left(y - \psi_{j}(x)\right)
\end{align*}
for some $\psi \in \dpsxc$ such that $\psi \equiv_{r_\delta} \phi_\delta$ (see Notation \ref{descending-notation}).
\end{enumerate}
\end{thm}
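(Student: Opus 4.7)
The plan is to leverage the birational change of coordinates $(u,v) := (1/x, y/x^k)$ for any integer $k > 1$ with $\delta(y/x^k) < 0$, which converts $\nu := -\delta$ into a divisorial discrete valuation on $\cc[u,v]_{(u,v)}$ centered at the origin. MacLane's theory (\cite{maclane-key}, \cite[Chapter 2]{favsson-tree}) attaches to $\nu$ a canonical sequence of key polynomials $K_0, K_1, \ldots, K_{n+1}$, finite because $\nu$ is divisorial. The bridge to the present setting is that clearing denominators produces key forms: each $g_j$ is obtained from $K_j$ by multiplication by a suitable power of $x$ determined by the $(u,v)$-degrees of $K_j$ and the substitution $u = 1/x$, $v = y/x^k$, and the conditions \eqref{semigroup-property}, \eqref{next-property}, and \eqref{generating-property} translate term-by-term. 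Part (1) then follows: the $g_j$'s so defined satisfy all required properties, uniqueness is inherited from MacLane's uniqueness statement, and finiteness from divisoriality. For part (2) I would conversely define $\delta$ on $\cc[x,x^{-1},y]$ via the right-hand side of \eqref{generating-eqn} and check that it restricts to a semidegree on $\cc[x,y]$; independence of the representative $G$ and the valuation axioms reduce to the corresponding standard statements for the weighted degree associated to the key polynomial data for $\nu$.

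For part (3), the equivalence between a minimal sequence of point blow-ups realizing $\nu$ and $\nu$ itself is the classical correspondence between divisorial valuations centered at a smooth point and finite sequences of infinitely near points (see e.g.\ \cite[Chapter 6]{favsson-tree}); composing with MacLane's algorithm and part (1) yields the equivalence between the blow-up data and the key form sequence. The remaining piece, the equivalence with a generic descending Puiseux series, is Algorithm \ref{key-algorithm} of Section \ref{essential-section}: reading off the formal Puiseux pairs (Definition \ref{formal-defn}) of $\tilde\phi_\delta$ determines the $\alpha_j$'s, $\beta_{j,i}$'s and $\theta_j$'s of the key form recursion, while conversely the recursion of \eqref{next-property} lets one peel off successive characteristic terms of $\tilde\phi_\delta$ by substituting back into each $g_j$ and tracking leading behaviour.

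For part (4), Theorem \ref{semidescending} gives $\delta(g) = \delta(x)\deg_x g(x,\phi_\delta(x) + \xi x^{r_\delta})$ for every $g \in \cc[x,y]$, and by Theorem \ref{dpuiseux-factorization} the last key form factors as $g_{n+1} = c x^m \prod(y - \psi_{ij}(x))$ for some descending Puiseux series. The strict inequality $\eta_{n+1} < \alpha_n \eta_n$ of \eqref{semigroup-property} together with the recursion \eqref{next-property} forces each $\psi_{ij}$ to agree with $\phi_\delta$ through all terms of degree $> r_\delta$, i.e.\ $\psi_{ij} \equiv_{r_\delta} \phi_\delta$, while a count of polydromy orders against $\delta(x) = p_1 \cdots p_{l+1}$ collapses the roots into a single conjugacy class. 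The main obstacle I anticipate is precisely this step: carefully matching the characteristic exponents of the Puiseux roots of $g_{n+1}$ with the successive semigroup-minimal $\alpha_j$'s of the key form sequence, and correctly accommodating the degenerate case $p_{l+1} = 1$ of Definition \ref{formal-defn}. It is the Puiseux-to-key-form translation encoded in Algorithm \ref{key-algorithm} that does the substantive work here; everything else is either classical or a formal translation.
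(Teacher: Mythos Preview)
Your proposal is correct and follows exactly the approach the paper itself indicates: the paper does not give a proof of this theorem but states that the properties are ``straightforward analogues of corresponding (standard) properties of key polynomials of valuations,'' to be obtained via the very change of coordinates $(u,v)=(1/x,y/x^k)$ and the reference to \cite{maclane-key}, \cite[Chapter~2]{favsson-tree} that you invoke. Your sketch simply makes this translation explicit, and the one minor point you do not mention---that in part~(4) the prefactor $cx^m$ in the descending Puiseux factorization must be $1$---follows immediately from the fact that the recursion \eqref{next-property} makes $g_{n+1}$ monic in $y$.
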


\begin{example} \label{wt-key-forms}
Let $\delta$ be the weighted degree from Example \ref{wt-depuiseux}. The key forms of $\delta$ are $g_0 = x$ and $g_1 = y$. 
\end{example}

\begin{example} \label{non-key-example}
Let $\delta_1$ and $\delta_2$ be the semidegrees from Example \ref{non-descending-example}. Then the key forms of $\delta_1$ are $x,y, y^5 - x^2$. On the other hand the key forms of $\delta_2$ are $x, y, y^5 - x^2, y^5 - x^2 - 5x^{-1}y^4$ (see Algorithm \ref{key-algorithm} for the general algorithm to compute key forms from generic descending Puiseux series).
\end{example}


\begin{defn}[Essential key forms] \label{essential-defn}
Let $\delta$ be a semidegree on $\cc[x,y]$ such that $\delta(x) > 0$, and let $g_0, \ldots, g_{n+1}$ be the key forms of $\delta$. Pick the subsequence $j_1, j_2, \ldots, j_m$ of $1, \ldots, n$ consisting of all $j_k$'s such that $\alpha_{j_k} > 1$ (where $\alpha_{j_k}$ is as in Property \ref{semigroup-property} of Definition \ref{key-defn}). Set 
\begin{align*}
f_k &:= \begin{cases}
g_0 = x & \text{if}\ k = 0,\\
g_{j_k} & \text{if}\ 1 \leq k \leq m,\\
g_{n+1}	& \text{if}\ k = m+1.
\end{cases}
\end{align*}
We say that $f_0, \ldots, f_{m+1}$ are the {\em essential key forms} of $\delta$. The following properties of essential key forms follow in a straightforward manner from the defining properties of key forms:
\end{defn}

\begin{prop} \label{essential-value-prop}
Let the notations be as in Definition \ref{essential-defn}. Let $\tilde \phi_\delta(x,\xi)$ be the generic descending Puiseux series of $\delta$ and $(q_1, p_1), \ldots, (q_{l+1}, p_{l+1})$ be the formal Puiseux paris of $\tilde \phi_\delta$. Then 
\begin{enumerate}
\item $l = m$, i.e.\ the number of essential key forms of $\delta$ is precisely $l+1$.
\item \label{omega-from-p} Set $\omega_k := \delta(f_k)$, $0 \leq k \leq l+1$. Then the sequence $\omega_0, \ldots, \omega_{l+1}$ depend only on the formal Puiseux pairs of $\tilde \phi_\delta$. More precisely,  with $p_0 := q_0 := 1$, we have 
\begin{align}
\omega_k
	&= 
	\begin{cases}
	p_1 \cdots p_{l+1} & \text{if}\ k = 0,\\
	p_{k-1}\omega_{k-1} + (q_k - q_{k-1}p_k)p_{k+1} \cdots p_{l+1} & \text{if}\ 1 \leq k \leq l+1.
	\end{cases}
	\label{omega-defn}
\end{align}
\item \label{alpha=p} Let $\alpha_1, \ldots, \alpha_{n+1}$ be as in Property \eqref{semigroup-property} of key forms. Then 
\begin{align*}
\alpha_j &= \begin{cases}
p_k & \text{if}\ j = j_k,\ 1 \leq  k \leq l+1, \\
1	& \text{otherwise.}
\end{cases}
\end{align*}
\item \label{essential-group} Pick $j$, $0 \leq j \leq n+1$. Assume $j_k < j < j_{k+1}$ for some $k$, $0 \leq k \leq l$. Then $\delta(g_j)$ is in the group generated by $\omega_0, \ldots, \omega_k$.
\end{enumerate}
\end{prop}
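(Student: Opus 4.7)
The plan is to prove all four parts together by induction on the key form index $j$, simultaneously tracking the descending Puiseux factorization of each $g_j$, the value $\eta_j = \delta(g_j)$, and the value semigroup $\Gamma_j := \zz\eta_0 + \cdots + \zz\eta_{j-1}$.

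First I would establish, by downward induction starting from Theorem \ref{key-thm}(\ref{last-expansion}), that each key form $g_j$ factors in $\dpsxc[y]$ as a product $\prod_i(y - \psi_{j,i}(x))$ whose roots $\psi_{j,i}$ share a common initial descending Puiseux segment $\phi^{(j)}$ and differ from it only in strictly lower degree terms. The recursion $g_{j+1} = g_j^{\alpha_j} - \theta_j g_0^{\beta_{j,0}}\cdots g_{j-1}^{\beta_{j,j-1}}$, combined with the inequality $\eta_{j+1} < \alpha_j\eta_j$, forces the monomial in the $g_i$'s to cancel the leading $x$-degree term of $g_j^{\alpha_j}$ when evaluated at $y = \tilde \phi_\delta$, and a direct calculation shows this cancellation is equivalent to $\phi^{(j+1)}$ extending $\phi^{(j)}$ by exactly one additional term. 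The essential key forms are then precisely those $g_{j_k}$ at which $\phi^{(j_k)}$ has just absorbed a new characteristic exponent of $\tilde\phi_\delta$.

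Having set up this correspondence, I would compute $\eta_j$ via Theorem \ref{semidescending} as $\delta(x)\cdot\deg_x g_j(x,\tilde\phi_\delta)$. Each factor $(y - \psi_{j,i})$ evaluated at $y = \tilde\phi_\delta$ contributes the $x$-degree of the first term of $\tilde\phi_\delta$ not captured by $\psi_{j,i}$, while the number of factors equals the product of the $p_i$'s corresponding to the characteristic exponents already absorbed. At an essential index $j_k$ this reproduces the recursion \eqref{omega-defn}: the $p_{k-1}\omega_{k-1}$ term records that the number of conjugates multiplies by $p_{k-1}$ exactly when a new characteristic exponent is first absorbed, while the additive correction $(q_k - q_{k-1}p_k)p_{k+1}\cdots p_{l+1}$ records the $x$-degree contribution of the newly absorbed $k$-th Puiseux pair. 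Since $\tilde\phi_\delta$ has exactly $l+1$ formal Puiseux pairs, this simultaneously yields $m=l$, proving (1) and (\ref{omega-from-p}).

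For (\ref{alpha=p}), induction on $k$ shows $\Gamma_{j_k} = (p_k p_{k+1}\cdots p_{l+1})\zz$; combined with $\gcd(q_k,p_k)=1$ and the congruence $\omega_k \equiv (q_k - q_{k-1}p_k)p_{k+1}\cdots p_{l+1} \pmod{p_k\cdots p_{l+1}}$ implied by \eqref{omega-defn}, the minimality condition in \eqref{semigroup-property} forces $\alpha_{j_k} = p_k$. At a non-essential index $j$, $\eta_j$ already lies in the current value group, forcing $\alpha_j = 1$, and (\ref{essential-group}) is then immediate because between $j_k$ and $j_{k+1}$ no new characteristic denominator is introduced. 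The main obstacle will be making rigorous the correspondence in the second paragraph between the MacLane-type recursion defining the key forms and the termwise Newton--Puiseux truncation structure of $\tilde\phi_\delta$; once this dictionary is in place, the remaining claims reduce to elementary bookkeeping in $\zz$ and routine computations of $x$-degrees of Puiseux polynomials.
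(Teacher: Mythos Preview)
Your plan is sound and matches what the paper has in mind. The paper gives no detailed proof here, asserting only that the proposition ``follows in a straightforward manner from the defining properties of key forms''; the real content is supplied later by Algorithm \ref{key-algorithm}, which makes the dictionary between the key-form recursion and Puiseux truncation explicit (Cases 2.2 versus 2.3 of the algorithm correspond precisely to $\alpha_j = 1$ versus $\alpha_j = p_{k+1}$), and your third-paragraph computation of $\omega_k$ is carried out verbatim in the proof of Lemma \ref{comparison-lemma-1}\eqref{f-phi-k-value}.

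One quibble: the phrase ``downward induction starting from Theorem \ref{key-thm}\eqref{last-expansion}'' is misleading, since the argument you actually sketch in your second paragraph runs \emph{forward} through the recursion from $g_0,g_1$, and inverting the relation $g_{j+1} = g_j^{\alpha_j} - \theta_j\cdots$ to recover the Puiseux factorization of $g_j$ from that of $g_{j+1}$ is not straightforward when $\alpha_j > 1$. The clean way to obtain the factorization of each $g_j$ is either to run the forward induction you describe, or to invoke assertion \eqref{truncassertion} of Proposition \ref{key-properties} to view $g_j$ as the last key form of a truncated semidegree and then apply Theorem \ref{key-thm}\eqref{last-expansion} to that truncation.
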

\begin{defn}
We call $\omega_0, \ldots, \omega_{l+1}$ of Proposition \ref{essential-value-prop} the sequence of {\em essential key values} of $\delta$.
\end{defn}

\begin{example} \label{non-essential-example}
Let $\delta_1, \delta_2$ be as in Examples \ref{non-descending-example} and \ref{non-key-example}. Then all the key forms of $\delta_1$ are essential, and essential key values are $\omega_0 = \delta_1(x) = 5$, $\omega_1 = \delta_1(y) = 2$, $\omega_2 = \delta_1(y^5-x^2) = 2$. The key forms of $\delta_2$ are $x,y, y^5 - x^2 - 5x^{-1}y^4$. The sequence of essential key values of $\delta_2$ is the {\em same} as that of $\delta_1$.
\end{example}


\subsection{Resolution of singularities of primitive normal compactifications}

Given two birational algebraic surfaces $Y_1,Y_2$, we say that $Y_1$ {\em dominates} $Y_2$ if the birational map $Y_1 \dashrightarrow Y_2$ is in fact a morphism. Let $\bar X$ be a primitive normal analytic compactification of $X := \cc^2$ and $\pi:Y \to \bar X$ be a resolution of singularities of $\bar X$. We say that $\pi$ or $Y$ is {\em $\pp^2$-dominating} if $Y$ dominates $\pp^2$.
$\pi$ is a {\em minimal} $\pp^2$-dominating resolution of singularities of $\bar X$ if up to isomorpshism (of algebraic varieties) $Y$ is the only $\pp^2$-dominating resolution of singularities of $\bar X$ which is dominated by $Y$.

\begin{thm} \label{unique-resolution}
Every primitive normal analytic compactification of $\cc^2$ has a unique minimal $\pp^2$-dominating resolution of singularities. 
\end{thm}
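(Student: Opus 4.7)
The plan is to establish existence by resolving the canonical birational map $\pp^2 \dashrightarrow \bar X$, obtain a minimal example via successive $(-1)$-curve contractions, and prove uniqueness by comparing two candidates through a common refinement followed by a Castelnuovo-type argument.

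For existence, I would first resolve the indeterminacies of the canonical bimeromorphic map $\sigma: \pp^2 \dashrightarrow \bar X$ (which restricts to a biholomorphism on $\cc^2$) by a finite sequence of blow-ups of $\pp^2$ centered on $L_\infty$ and its successive preimages. The resulting smooth algebraic surface $Y_0$ carries a morphism to $\pp^2$ and an analytic morphism to $\bar X$; the latter is birational into a normal surface, hence is a resolution of singularities, so $Y_0$ is already a $\pp^2$-dominating resolution. Starting from $Y_0$, I would repeatedly contract any $(-1)$-curve $E$ that is simultaneously contracted by the morphisms to $\pp^2$ and to $\bar X$, so that both morphisms descend to the contracted surface. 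Each such step strictly decreases the Picard number, and after finitely many contractions I arrive at a $\pp^2$-dominating resolution $Y$ admitting no such $(-1)$-curve, i.e.\ a minimal one.

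For uniqueness, given two minimal $\pp^2$-dominating resolutions $Y_1, Y_2$, the plan is to build a common refinement $Y$ by resolving the indeterminacies of $Y_1 \dashrightarrow Y_2$; then $Y$ is itself a $\pp^2$-dominating resolution of $\bar X$, with birational morphisms $f_i: Y \to Y_i$. I would factor $f_1$ as a sequence of blow-downs of $(-1)$-curves $F_1, \ldots, F_n$ on successive intermediate surfaces, and show inductively that each $F_i$ is also contracted by $f_2$. For the base case, consider the image $F' \subseteq Y_2$ of $F_1$. If $F'$ were a curve, it would be irreducible and contracted by the birational morphism $Y_2 \to \pp^2$ between smooth surfaces, hence a $(-1)$-curve by Castelnuovo; moreover $F'$ would also be contracted by $Y_2 \to \bar X$ (since $F_1$ is, via $f_1$). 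Contracting $F'$ on $Y_2$ would then produce a strictly smaller $\pp^2$-dominating resolution, contradicting the minimality of $Y_2$. Therefore $F_1$ maps to a point in $Y_2$, so $f_2$ factors through the contraction of $F_1$; iterating with the induced map from the contracted surface completes the induction. Hence $f_2$ factors through $f_1$, so $Y_1$ dominates $Y_2$, and by symmetry $Y_2$ dominates $Y_1$, producing an isomorphism $Y_1 \cong Y_2$.

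The main obstacle I anticipate is the Castelnuovo step in the uniqueness argument: I need to verify carefully that a single irreducible curve contracted by a birational morphism of smooth surfaces must be a $(-1)$-curve, and to confirm that contracting such a curve preserves membership in the $\pp^2$-dominating category. Both ingredients are classical, but the bookkeeping---that after contraction the resulting surface still inherits birational morphisms to $\pp^2$ and to $\bar X$---is the place where the argument requires attention.
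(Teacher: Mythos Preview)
The paper does not actually prove this theorem; immediately after the statement it says a proof is given in \cite{sub2-2} and that it uses Theorem~\ref{effective-answer} of the present article. So your general birational-geometry approach is necessarily different from the paper's (unseen) argument, which evidently relies on the key-form machinery rather than on a model-theoretic minimal-model argument.

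Your strategy is sound in outline, but there is a genuine gap precisely where you flag it, and the statement you plan to verify is \emph{false}. It is not true that ``a single irreducible curve contracted by a birational morphism of smooth surfaces must be a $(-1)$-curve'': blow up $\pp^2$ at a point, then at a point on the exceptional divisor; the strict transform of the first exceptional curve is a $(-2)$-curve contracted by the map to $\pp^2$. What saves your argument is the extra information that $F_1$ itself is a $(-1)$-curve on $Y$. Writing $f_2^*F' = F_1 + E$ with $E$ effective and $f_2$-exceptional, the projection formula gives $(F')^2 = (f_2^*F')\cdot F_1 = F_1^2 + E\cdot F_1 \geq F_1^2 = -1$, since $F_1$ is not a component of $E$. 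On the other hand $(F')^2 < 0$ because $F'$ lies in the exceptional locus of the birational morphism $Y_2 \to \pp^2$ between smooth projective surfaces, whose intersection matrix is negative definite. Hence $(F')^2 = -1$, and $F'$ is a smooth rational curve because every irreducible exceptional curve of such a morphism is; so $F'$ is indeed a $(-1)$-curve and can be contracted. With this corrected step, the rest of your induction and the minimality contradiction go through as written.
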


We have not found any proof of Theorem \ref{unique-resolution} in the literature. We give a proof in \cite{sub2-2} (using Theorem \ref{effective-answer} of this article). In this section we recall from \cite{sub2-1} a description of the {\em dual graphs} of minimal $\pp^2$-dominating resolutions of singularities of primitive normal analytic compactifications of $\cc^2$. 

\begin{defn} \label{dual-defn}
	Let $E_1, \ldots, E_k$ be non-singular curves on a (non-singular) surface such that for each $i \neq j$, either $E_i \cap E_j = \emptyset$, or $E_i$ and $E_j$ intersect transversally at a single point. Then $E = E_1 \cup \cdots \cup E_k$ is called a {\em simple normal crossing curve}. The {\em (weighted) dual graph} of $E$ is a weighted graph with $k$ vertices $V_1, \ldots, V_k$ such that 
	\begin{itemize}
		\item there is an edge between $V_i$ and $V_j$ iff $E_i \cap E_j \neq \emptyset$,
		\item the weight of $V_i$ is the self intersection number of $E_i$.
	\end{itemize}
	Usually we will abuse the notation, and label $V_i$'s also by $E_i$. 
\end{defn}

\begin{defn} \label{augmented-defn}
	Let $\bar X$ be a primitive normal analytic compactification of $X := \cc^2$ and $\pi: Y \to \bar X$ be a resolution of singularities of $\bar X$ such that $Y \setminus X$ is a simple normal crossing curve. The {\em augmented dual graph} of $\pi$ is the dual graph (Definition \ref{dual-defn}) of $Y\setminus X$. If $Y$ is $\pp^2$-dominating, we define the {\em augmented and marked dual graph} of $\pi$ to be its augmented dual graph with the strict transforms of the curves at infinity on $\pp^2$ and $\bar X$ marked (e.g.\ by different colors or labels). 
\end{defn}

Given a sequence $(\tilde q_1, \tilde p_1), \ldots, (\tilde q_n, \tilde p_n)$ of pairs of relatively prime integers, and positive integers $m,e$ such that $1 \leq m \leq n$, we denote by $\tilde \Gamma_{\vec{\tilde q}, \vec {\tilde p}, m, e}$ the weighted graph in figure \ref{curve-resolution},
\begin{figure}[htp]
	\newcommand{\curveresolutionblock}[6]{
		\pgfmathsetmacro\x{0}
		\pgfmathsetmacro\y{0}
		
		\draw[thick] (\x - \edge,\y) -- (\x,\y);
		\draw[thick] (\x,\y) -- (\x+\edge,\y);
		\draw[thick] (\x,\y) -- (\x, \y-\vedge);
		\draw[thick, dashed] (\x, \y-\vedge) -- (\x, \y-\vedge - \dashedvedge);
		\draw[thick] (\x, \y-\vedge - \dashedvedge) -- (\x, \y-2*\vedge - \dashedvedge);
		
		\fill[black] (\x - \edge, \y) circle (3pt);
		\fill[black] (\x, \y) circle (3pt);
		\fill[black] (\x + \edge, \y) circle (3pt);
		\fill[black] (\x, \y- \vedge) circle (3pt);
		\fill[black] (\x, \y- \vedge - \dashedvedge) circle (3pt);
		\fill[black] (\x, \y- 2*\vedge - \dashedvedge) circle (3pt);
		
		\draw (\x- \edge,\y )  node (left) [above] {$-{#1}_{#5}^{{#2}_{#5}}$};
		\draw (\x,\y )  node (center) [above] {$-{#1}_{#6}^0 - 1$};
		\draw (\x+ \edge,\y )  node (right) [above] {$-{#1}_{#6}^1$};
		\draw (\x,\y- \vedge)  node (bottom1) [right] {$-{#3}_{#5}^{{#4}_{#5}}$};
		\draw (\x,\y- \vedge - \dashedvedge)  node (bottom2) [right] {$-{#3}_{#5}^2$};
		\draw (\x,\y- 2*\vedge - \dashedvedge)  node (bottom3) [right] {$-{#3}_{#5}^1$};
	}
	\begin{center}
		\begin{tikzpicture}
		\pgfmathsetmacro\edge{1.5}
		\pgfmathsetmacro\vedge{.75}
		\pgfmathsetmacro\dashedvedge{1}
		
		\draw[thick, dashed] (0,0) -- (\edge,0);
		\fill[black] (0, 0) circle (3pt);
		\draw (0,0)  node (first) [above] {$-u_1^1$};
		
		\begin{scope}[shift={(2*\edge,0)}]
		\curveresolutionblock{u}{t}{v}{r}{1}{2}
		\end{scope}
		
		\draw[thick, dashed] (3*\edge,0) -- (4*\edge,0);
		
		\begin{scope}[shift={(5*\edge,0)}]
		\curveresolutionblock{u}{t}{v}{r}{m-1}{m}
		\end{scope}	
		
		\draw[thick, dashed] (6*\edge,0) -- (7*\edge,0);	
		
		\pgfmathsetmacro\x{8*\edge}
		\pgfmathsetmacro\y{0}
		
		\draw[thick] (\x - \edge,\y) -- (\x,\y);
		\draw[thick] (\x,\y) -- (\x, \y-\vedge);
		\draw[thick, dashed] (\x, \y-\vedge) -- (\x, \y-\vedge - \dashedvedge);
		\draw[thick] (\x, \y-\vedge - \dashedvedge) -- (\x, \y-2*\vedge - \dashedvedge);
		
		\fill[black] (\x - \edge, \y) circle (3pt);
		\fill[black] (\x, \y) circle (3pt);
		\fill[black] (\x, \y- \vedge) circle (3pt);
		\fill[black] (\x, \y- \vedge - \dashedvedge) circle (3pt);
		\fill[black] (\x, \y- 2*\vedge - \dashedvedge) circle (3pt);
		
		\draw (\x- \edge,\y )  node (left) [above] {$-u_m^{t_m}$};
		\draw (\x,\y )  node (center) [above] {$-e$};
		\draw (\x,\y- \vedge)  node (bottom1) [right] {$-v_m^{r_m}$};
		\draw (\x,\y- \vedge - \dashedvedge)  node (bottom2) [right] {$-v_m^{2}$};
		\draw (\x,\y- 2*\vedge - \dashedvedge)  node (bottom3) [right] {$-v_m^{1}$};
		
		\end{tikzpicture}
		\caption{$\tilde \Gamma_{\vec{\tilde q}, \vec {\tilde p}, m, e}$}\label{curve-resolution}
	\end{center}
\end{figure}
where the right-most vertex in the top row has weight $-e$, and the other weights satisfy: $u_i^0, v_i^0 \geq 1$ and $u_i^j, v_i^j \geq 2$ for $j > 0$, and are uniquely determined from the continued fractions: 
\begin{gather} \label{continued-fractions}
\frac{\tilde p_i}{q'_i} = u_i^0 - \cfrac{1}{u_i^1 - \cfrac{1}{\ddots - \frac{1}{u_i^{t_i}}}},\quad 
\frac{q'_i}{\tilde p_i} = v_i^0 - \cfrac{1}{v_i^1 - \cfrac{1}{\ddots - \frac{1}{v_i^{r_i}}}},\quad 
\text{where}\ q'_i := 
\begin{cases}
\tilde q_1 								& \text{if}\ i = 1\\
\tilde q_i - \tilde q_{i-1}\tilde p_i	& \text{otherwise.}
\end{cases}
\end{gather}

\begin{rem}
$\tilde \Gamma_{\vec{\tilde q}, \vec {\tilde p}, m, 1}$ is the weighted dual graph of the exceptional divisor of the minimal resolution of an irreducible plane curve singularity with Puiseux pairs $(\tilde q_1, \tilde p_1), \ldots, (\tilde q_m, \tilde p_m)$ (see, e.g.\ \cite[Section 2.2]{mendris-nemethi}).
\end{rem}

\begin{thm}[{\cite[Proposition 4.2, Corollary 6.3]{sub2-1}}] \label{primitive-resolution-graph}
Let $\bar X$ be a primitive normal compactification of $X := \spec \cc[x,y] \cong \cc^2$. 
\begin{enumerate}
\item If $\bar X$ is nonsingular, then $\bar X \cong \pp^2$.
\item \label{graph-assertion} Assume $\bar X$ is singular. Let $\tilde \phi_\delta(x,\xi)$ be the generic descending Puiseux series (Definition \ref{generic-descending-defn}) associated to $E^* := \bar X \setminus X$ and $(q_1,p_1), \ldots, (q_{l+1}, p_{l+1})$ be the {\em formal Puiseux pairs} of $\tilde \phi(x,\xi)$ (Definition \ref{formal-defn}). Define $(\tilde q_i, \tilde p_i) := (p_1 \cdots p_i - q_i, p_i)$, $1 \leq i \leq l+1$. 
\begin{enumerate}
\item \label{normal-form} After a (polynomial) change of coordinates of $\cc^2$ if necessary, we may assume that $q_1 < p_1$ and either $l= 0$ or $q_1 > 1$.
\item \label{case:graph1} Assume \eqref{normal-form} holds. If $p_{l+1} > 1$, then the augmented and marked dual graph of the minimal $\pp^2$-dominating resolution of singularities of $\bar X$ is as in figure \ref{fig:graph1}, where the strict transform of the curve at infinity on $\pp^2$ (resp.\ $\bar X$) is marked by $L$ (resp.\ $E^*$).
\item \label{case:graph2} Assume \eqref{normal-form} holds. If $p_{l+1} = 1$, then ($l \geq 1$, and) the augmented and marked dual graph of the minimal $\pp^2$-dominating resolution of singularities of $\bar X$ is as in figure \ref{fig:graph2}, where the strict transform of the curve at infinity on $\pp^2$ (resp.\ $\bar X$) is marked by $L$ (resp.\ $E^*$).
\end{enumerate}
\item \label{converse-graph} Conversely, let $0 \leq l$, and $(q_1,p_1), \ldots, (q_{l+1}, p_{l+1})$ be pairs of integers such that
\begin{enumerate}
\item $p_k \geq 2$, $1 \leq k \leq l$.
\item $p_{l+1} \geq 1$.
\item $\tilde q_k := p_1 \cdots p_k - q_k > 0$, $1 \leq k \leq l+1$.
\item $\gcd(p_k, q_k) = 1$, $1 \leq k \leq l+1$.
\end{enumerate}
Assume moreover that \eqref{normal-form} holds, i.e.\ either $l = 0$ or $q_{l+1} > 1$. Define $\omega_0, \ldots, \omega_{l+1}$ as in \eqref{omega-defn}. Let
\begin{align}
\Gamma_{\vec p, \vec q} := 
\begin{cases}
\text{the graph from Figure \ref{fig:graph1}} & \text{if}\ p_{l+1} > 1, \\
\text{the graph from Figure \ref{fig:graph2}} & \text{if}\ p_{l+1} = 1.
\end{cases}
\label{Gamma-p-q}
\end{align}
Then $\Gamma_{\vec p,\vec q}$ is the augmented and marked dual graph of the minimal $\pp^2$-dominating resolution of singularities of a primitive normal analytic compactification of $\cc^2$ iff $\omega_{l+1} > 0$.
\end{enumerate}
\end{thm}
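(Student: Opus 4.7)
The plan is to address the three parts of the theorem separately, with the bulk of the work in part (2). For part (1), the nonsingular case, I would invoke the Remmert–Van de Ven theorem mentioned in the introduction: since the complement of $C_\infty = \bar X \setminus X$ in $\bar X$ is affine, one has $C_\infty^2 > 0$, and a standard Hodge index / minimal model argument then pins $\bar X$ down to $\pp^2$.

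For part (2), the strategy is to build a $\pp^2$-dominating resolution of $\bar X$ by performing the sequence of point blow-ups prescribed by the generic descending Puiseux series $\tilde\phi_\delta(x,\xi)$ associated to $\delta = \ord_{C_\infty}$. By Theorem \ref{key-thm}(iii), starting at $\Xxy$ and successively blowing up the centers dictated by the terms of $\tilde\phi_\delta$ produces, after finitely many steps, a nonsingular surface $Y$ together with an exceptional curve whose strict transform on $\bar X$ is precisely $C_\infty$; this curve is $E^*$. The normal form condition \eqref{normal-form}, achievable by a linear change of $(x,y)$, is exactly what guarantees that the first blow-up sits over $Q_y$ and that all subsequent centers lie on the freshly created exceptional curves in the prescribed pattern. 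Each formal Puiseux pair $(q_i,p_i)$ then contributes a block of exceptional curves, whose self-intersections are the negatives of the integers appearing in the Hirzebruch–Jung continued fractions \eqref{continued-fractions}; this is because, once the first $i-1$ blocks have been produced, the remaining local computation is that of resolving a cyclic quotient singularity of type $\tfrac{1}{\tilde p_i}(1,q'_i)$, whose resolution graph is precisely the Hirzebruch–Jung fan expansion. Minimality of the $\pp^2$-dominating resolution follows by inspecting the graph: away from $E^*$, every vertex has self-intersection $\leq -2$ by construction (all interior Hirzebruch–Jung entries are $\geq 2$), so no curve other than $E^*$ can be contracted while preserving both $\pp^2$-domination and the resolution of $\bar X$. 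The dichotomy between (b) and (c) reflects the fact that the generic Puiseux pair $(q_{l+1},p_{l+1})$ contributes a full block precisely when $p_{l+1}>1$; when $p_{l+1}=1$ the ``arm'' beneath $E^*$ disappears, producing the graph of Figure \ref{fig:graph2}.

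For the converse (3), given arithmetic data satisfying the listed conditions, Theorem \ref{key-thm}(ii) produces a semidegree $\delta$ on $\cc[x,y]$ whose formal Puiseux pairs are the prescribed ones, and running the construction of (2) backward yields a candidate surface $Y$ with a distinguished curve $E^*$. What remains is to decide when the union $E'$ of $L'_\infty$ and the exceptional curves other than $E^*$ is analytically contractible, which by Grauert amounts to negative-definiteness of its intersection matrix. I expect the main obstacle to be showing that negative-definiteness of this matrix, together with the requirement that the contraction actually compactifies $\cc^2$ (rather than merely contracting $E'$ to an analytic germ inside some larger surface), is governed precisely by the sign of $\omega_{l+1}$. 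Concretely, intersecting $E^*$ with the pullback to $Y$ of a generic line of $\pp^2$ and unwinding the contributions of the blow-ups via Proposition \ref{essential-value-prop}\eqref{omega-from-p} yields an identity relating $(E^*)^2$, the $p_i$'s, and $\omega_{l+1}$, which I would establish by induction on $l$; one finds that $\omega_{l+1}>0$ is equivalent to $(E^*)^2 <0$ together with the positivity of the discriminant of the relevant block of the intersection matrix. Negative-definiteness of the remaining components then follows by the standard Hirzebruch–Jung chain computation on each arm of the graph, completing the equivalence.
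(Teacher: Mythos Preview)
The paper does not prove this theorem at all: it is stated as background material in Section~\ref{firstground} and attributed wholesale to \cite[Proposition 4.2, Corollary 6.3]{sub2-1}. There is therefore no proof in the paper to compare your proposal against. The only fragment of the argument that appears in the present paper is Remark~\ref{analytic-contractibility}, which (again citing \cite{sub2-1}) records that analytic contractibility of $E'$ is equivalent to $\delta(g_{n+1})>0$; combined with Proposition~\ref{essential-value-prop} this gives the condition $\omega_{l+1}>0$ in assertion~\eqref{converse-graph}.

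Your outline is in broadly the right spirit for how such results are proved---blow up along the infinitely near points dictated by the Puiseux data, read off Hirzebruch--Jung chains, and for the converse invoke Grauert---but a couple of points deserve caution. In part~(2), minimality of the $\pp^2$-dominating resolution is more delicate than ``every vertex other than $E^*$ has weight $\leq -2$'': the vertex $L$ has weight $1-u_1^0$, which can equal $-1$ (e.g.\ when $p_1=2$), so you would need to argue separately that contracting $L'$ destroys $\pp^2$-domination. In part~(3), your proposed equivalence ``$\omega_{l+1}>0$ iff $(E^*)^2<0$ together with positivity of a discriminant'' is not quite right as stated: $E^*$ always has self-intersection $-1$ in the minimal $\pp^2$-dominating resolution (it is the last exceptional curve of a blow-up), so the condition cannot live there. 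The actual content is that the intersection matrix of $E' = (Y\setminus X)\setminus E^*$ is negative definite iff $\omega_{l+1}>0$, and this is what \cite[Theorem 1.4]{sub2-1} establishes.
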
 

\begin{figure}[htp]
\newcommand{\curveresolutionsmallblock}[5]{
	\pgfmathsetmacro\edge{1.25}
	\pgfmathsetmacro\vedge{1}
	\pgfmathsetmacro\x{0}
	\pgfmathsetmacro\y{0}
	
	\draw[thick] (\x - \edge,\y) -- (\x,\y);
	\draw[thick, dashed] (\x,\y) -- (\x+\edge,\y);
	\draw[thick, dashed] (\x + \edge,\y) -- (\x+\edge,\y - \vedge);
	\draw[thick, dashed] (\x + \edge, \y) -- (\x + 2*\edge, \y);
	\draw[thick, dashed] (\x + 2*\edge,\y) -- (\x+ 2*\edge,\y - \vedge);
	
	\fill[black] (\x - \edge, \y) circle (3pt);
	\fill[black] (\x, \y) circle (3pt);
	\fill[black] (\x + \edge, \y) circle (3pt);
	\fill[black] (\x + \edge, \y - \vedge) circle (3pt);
	\fill[black] (\x + 2*\edge, \y) circle (3pt);
	\fill[black] (\x + 2*\edge, \y - \vedge) circle (3pt);
	
	\draw (\x- \edge,\y )  node (e0up) [above] {$1 - u_1^0$};
	\draw (\x- \edge,\y )  node (e0down) [below] {$L$};
	\draw (\x,\y )  node (e1up) [above] {$-u_1^1$};
	\draw (\x+ \edge,\y )  node (middleup) [above] {$-u_2^0-1$};
	\draw (\x+ \edge,\y - \vedge)  node (middlebottom) [right] {$-v_1^1$}; 	
	\draw (\x+ 2*\edge,\y )  node (estarup) [above] {#1};
	\draw (\x+ 2*\edge,\y )  node (estardown) [below right] {#2};
	\draw (\x+ 2*\edge, \y - \vedge)  node (rightbottom) [right] {$-v_{#3}^1$};
	
	\draw[thick, dashed, blue] (\x - 0.5*\edge, \y + \vedge) rectangle (\x + 2*\edge + #5, \y - 1.4*\vedge);
	
	\draw (\x + 0.75*\edge + 0.5*#5, \y - 1.8*\vedge) node (Gamma) {#4};
}
\centering
\subcaptionbox[]{%
   Case $p_{l+1} > 1$%
    \label{fig:graph1}%
}
[%
    0.4\textwidth 
]%
{%
	\begin{tikzpicture}[scale=.9, font = \small] 	
		\curveresolutionsmallblock{$-1$}{$E^*$}{l+1}{$\tilde \Gamma_{\vec{\tilde q}, \vec{\tilde p}, l+1, 1}$}{1.5}
	\end{tikzpicture}
}
\subcaptionbox[]{%
	Case $p_{l+1} = 1$%
	\label{fig:graph2}
}
[%
    0.5\textwidth 
]%
{%
	\begin{tikzpicture}[scale=.9, font = \small] 	
		\curveresolutionsmallblock{$-2$}{}{l}{$\tilde \Gamma_{\vec{\tilde q}, \vec{\tilde p}, l, 2}$}{1}
		\pgfmathsetmacro\edge{1.25}
		\pgfmathsetmacro\extralen{0.5}
		\begin{scope}[shift={(2*\edge + \extralen,0)}]
		\pgfmathsetmacro\sedge{1}
		\pgfmathsetmacro\dashedge{1.25}
		\draw[thick] (-\extralen, 0) -- (\sedge, 0);
		\draw[thick, dashed] (\sedge, 0) -- (\sedge + \dashedge, 0);
		\draw[thick] (\sedge + \dashedge, 0) -- (2*\sedge + \dashedge, 0);
		
		\fill[black] (\sedge, 0) circle (3pt);
		\fill[black] (\sedge + \dashedge, 0) circle (3pt);
		\fill[black] (2*\sedge + \dashedge, 0) circle (3pt);
		
		\draw (\sedge,0 )  node (firstup) [above] {$-2$};
		\draw (\sedge + \dashedge,0 )  node (secondup) [above] {$-2$};
		\draw (2*\sedge + \dashedge,0 )  node (lastup) [above] {$-1$};
		\draw (2*\sedge + \dashedge,0 )  node (lastright) [right] {$E^*$};
		
		\draw [thick, decoration={brace, mirror, raise=5pt},decorate] (\sedge,0) -- (\sedge + \dashedge,0);
		\draw (\sedge + 0.5*\dashedge,-0.75) node [text width= 1.75cm, align = center] (extranodes) {$q_l - q_{l+1} -1$ vertices};
		\end{scope}	
		\end{tikzpicture}
}
\caption{Augmented and marked dual graph for the minimal $\pp^2$-dominating resolutions of singularities of primitive normal analytic compactifications of $\cc^2$ }
\label{fig:primitive-resolution-graph}
\end{figure}

\begin{rem} \label{essential-value-remark}
Let $\bar X$ be a primitive normal analytic compactifications of $\cc^2 = \spec \cc[x,y]$ and $\Gamma$ be the augmented and marked dual graph for the minimal $\pp^2$-dominating resolution of singularities of $\bar X$. Theorem \ref{primitive-resolution-graph} and identity \eqref{continued-fractions} imply that $\Gamma$ determines and is determined by the formal Puiseux pairs of the generic descending Puiseux series associated to the curve $E^*$ at infinity on $\bar X$. Let $\delta$ be the semidegree on $\cc[x,y]$ corresponding to $E^*$. Proposition \ref{essential-value-prop} then implies that the $\delta$-value of essential key forms of $\delta$ are also uniquely determined by $\Gamma$; we call these the {\em essential key values} of $\Gamma$.
\end{rem}

%
%
%
%

\section{Main results} \label{result-section}
Consider the set up of Question \ref{contract-question}. Assume $N = 1$. 
Choose coordinates $(x,y)$ on $\pp^2\setminus L$. Let $\delta$ be the semidegree on $\cc[x,y]$ associated to $E_1$ (i.e.\ $\delta$ is the order of pole along $E_1$) and let $g_0, \ldots, g_{n+1} \in \cc[x,x^{-1},y]$ be the key forms of $\delta$. 

\begin{thm}[Answer to $N=1$ case of Question \ref{contract-question}] \label{effective-answer}
The following are equivalent:
\begin{enumerate}
\item \label{algebraic-Y'} $Y'$ is algebraic.
\item \label{all-polynomials} $g_j$ is a polynomial, $0 \leq j \leq n+1$.
\item \label{last-polynomial} $g_{n+1}$ is a polynomial.
\end{enumerate}
Moreover, if any of the above conditions holds, then $Y'$ is isomorphic to the closure of the image of $\cc^2$ in the weighted projective variety $\pp^{n+2}(1,\delta(g_0), \ldots, \delta(g_{n+1}))$ under the mapping $(x,y) \mapsto [1:g_0:\cdots:g_{n+1}]$. 
\end{thm}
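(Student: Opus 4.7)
\medskip

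The plan is to prove the trivial implication $(2) \Rightarrow (3)$ first, then $(3) \Rightarrow (2)$ by showing that non-polynomiality of a key form propagates forward through the recursion of Property~\ref{next-property}, and finally $(2) \Leftrightarrow (1)$ together with the moreover statement by using the polynomial key forms to construct an explicit weighted-projective model of $Y'$.

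\medskip

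For $(3) \Rightarrow (2)$ I plan to argue the contrapositive. Let $k$ be minimal with $g_k \notin \cc[x,y]$; then $g_0, \ldots, g_{k-1}$ are polynomials, and the recursion (Property~\ref{next-property}) forces $\beta_{k-1,0} < 0$. Let $\mu_j$ denote the smallest $x$-exponent occurring in the Laurent expansion of $g_j$. The key claim is that $\mu_j < 0 \Rightarrow \mu_{j+1} < 0$: combining the strict inequality $\delta(g_{j+1}) < \alpha_j \delta(g_j) = \delta(\theta_j g_0^{\beta_{j,0}} \cdots g_{j-1}^{\beta_{j,j-1}})$ from Property~\ref{semigroup-property} with a careful bookkeeping on the monomials of $g_j^{\alpha_j}$ and of the subtracted term (using the descending-Puiseux description of key forms from Theorem~\ref{key-thm}(\ref{last-expansion})) shows that cancellation between them occurs at the top of the $\delta$-filtration but leaves a residue of negative $x$-exponent in $g_{j+1}$. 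Iterating from $j = k$ up to $j = n$ yields $\mu_{n+1} < 0$, contradicting~(3).

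\medskip

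For $(2) \Rightarrow (1)$ and the moreover statement, I consider the morphism
\[
\Phi \colon \cc^2 \to \pp^{n+2}(1, \eta_0, \ldots, \eta_{n+1}), \qquad (x, y) \mapsto [1 : g_0(x, y) : \cdots : g_{n+1}(x, y)],
\]
where $\eta_j := \delta(g_j)$. This is a morphism since the $g_j$ are polynomials, and an embedding because $g_0 = x$ and $g_1 = y$. Let $W$ be the closure of $\Phi(\cc^2)$ in the weighted projective space; it is a projective algebraic variety. The generating property (Property~\ref{generating-property}) identifies the graded ring $\bigoplus_{m \geq 0} F_m$ (with $F_m := \{f \in \cc[x,y] : \delta(f) \leq m\}$) with the image of $\cc[z_{-1}, z_0, \ldots, z_{n+1}]$ under $z_{-1} \mapsto 1$, $z_j \mapsto g_j$, so $W$ is the $\proj$ of this graded ring. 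Next I plan to verify that $W \setminus \Phi(\cc^2)$ is a single irreducible curve $C$ (by analyzing the fiber over $z_{-1} = 0$ together with the descending-Puiseux parametrization of analytic curve germs at infinity from Proposition~\ref{prop-param}), that the divisorial valuation along $C$ equals $\delta$, and that $W$ is normal along $C$. Together these realize $W$ as a normal primitive algebraic compactification of $\cc^2$ associated to $\delta$. Since $Y'$ is likewise a normal primitive analytic compactification associated to $\delta$, the identity on $\cc^2$ extends by normality and matching boundary valuations to an isomorphism $Y' \cong W$, yielding both algebraicity of $Y'$ and the moreover statement.

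\medskip

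For the remaining direction $(1) \Rightarrow (3)$, assume $Y'$ is algebraic. Then for some $m$ the divisor $mE_1$ is ample, the section ring $\bigoplus_{k \geq 0} H^0(Y', \scrO(kmE_1)) = \bigoplus_k F_{km}$ is finitely generated, and $Y' \cong \proj$ of it. I plan to build a sequence of polynomial generators of $\bigoplus_k F_k$ from the bottom up, introducing at each step a polynomial realizing the smallest $\delta$-value not yet in the semigroup spanned by earlier generators; finite generation of the section ring guarantees that this process terminates. By the uniqueness and recursive structure of key forms (Theorem~\ref{key-thm}(\ref{key-to-degree})) combined with Property~\ref{generating-property}, the resulting polynomial sequence must coincide with $g_0, \ldots, g_{n+1}$, so each $g_j$ is a polynomial, establishing (2) and hence (3). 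I expect this last direction to be the main obstacle: translating the global geometric algebraicity of $Y'$ into the detailed combinatorial claim that finite generation of the section ring by polynomials reproduces the key-form recursion verbatim in $\cc[x, y]$ will require a delicate combination of Property~\ref{generating-property} with the minimality and rigidity of the key-form construction.
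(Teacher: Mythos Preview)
Your plan for $(2)\Rightarrow(1)$ and the ``moreover'' clause is on the right track and matches the paper's route via Proposition~\ref{basic-semi-prop} and Lemma~\ref{poly-positive-prop}. The other two implications, however, have real gaps.

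For $(3)\Rightarrow(2)$, the propagation claim ``$\mu_j<0\Rightarrow\mu_{j+1}<0$'' is not justified and is not obviously true. Once $j\geq k+1$ the subtracted term $\theta_j g_0^{\beta_{j,0}}\cdots g_{j-1}^{\beta_{j,j-1}}$ itself involves non-polynomial factors $g_k,\ldots,g_{j-1}$, so its negative $x$-exponent part can interact with that of $g_j^{\alpha_j}$ in a complicated way; your ``careful bookkeeping'' sentence does not explain why complete cancellation of the negative $x$-part is impossible. The paper does \emph{not} attempt this direct argument. Instead it deduces $(3)\Leftrightarrow(2)$ from Theorem~\ref{cousin-answer}: if $g_{n+1}$ is a polynomial then (by Theorem~\ref{key-thm}\eqref{last-expansion}) it satisfies the Cousin condition~\eqref{cousin-polynomial'}, and the implication \eqref{cousin-polynomial'}$\Rightarrow$\eqref{all-cousinomials} is the technical heart of the paper --- it needs the lifting lemmas of Section~\ref{prep-section}, the comparison Lemma~\ref{comparison-lemma-2}, and the Gr\"obner-basis assertion of Lemma~\ref{poly-positive-prop}. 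There is no shortcut of the kind you sketch.

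For $(1)\Rightarrow(3)$, your plan to rebuild the key forms from a minimal generating set of the section ring is circular: you need the $g_j$ to be polynomials in order to compare a polynomial generating set to them, and uniqueness of key forms (Theorem~\ref{key-thm}) is a statement about the recursion in $\cc[x,x^{-1},y]$, not about generators of $\bigoplus_m F_m$. The paper's argument is both simpler and avoids this trap: since $Y'$ is projective there is an algebraic curve $C\subset Y'$ missing the point $P_\infty$; its defining polynomial $f\in\cc[x,y]$ then satisfies, via Proposition~\ref{prop-param}, the Cousin condition~\eqref{cousin-polynomial'}, and Theorem~\ref{cousin-answer} gives all $g_j\in\cc[x,y]$. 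The missing idea in your approach is precisely this bridge through the existence of a single polynomial curve with prescribed Puiseux behaviour at infinity.
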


\begin{rem} \label{analytic-contractibility}
Note that to ask Question \ref{contract-question} we need to determine if the given curve $E'$ is analytically contractible. We would like to point out that in addition to the direct application of Grauert's criterion, the contractibility of $E'$ can be determined in terms of the semidegrees associated to $E_1, \ldots, E_N$ \cite[Theorem 1.4]{sub2-1}. In particular, in the $N=1$ case, $E'$ of Question \ref{contract-question} is analytically contractible iff $\delta(g_{n+1}) > 0$ (where $\delta$ and $g_{n+1}$ are as above).
\end{rem}

We now state the correspondence between primitive normal algebraic compactifications of $\cc^2$ and algebraic curves in $\cc^2$ with one place at infinity:

\begin{thm} \label{geometric-answer}
	Let $\bar X$ be a primitive normal analytic compactification of $\cc^2$. Let $P \in \bar X\setminus \cc^2$ be a center of a $\pp^2$-infinity on $\bar X$ (Remark-Definition \ref{P^2-center}). Then the following are equivalent:
	\begin{enumerate}
		\item \label{algebraic-X-bar} $\bar X$ is algebraic.
		\item \label{one-place-curve} There is an algebraic curve $C$ in $\cc^2$ with {\em one place at infinity} such that $P$ is not on the closure of $C$ in $\bar X$.
	\end{enumerate}
	Let $\delta$ be the semidegree on $\cc[x,y]$ corresponding to the curve at infinity on $\bar X$, and $g_0, \ldots, g_{n+1} \in \cc[x,x^{-1},y]$ be the sequence of key forms of $\delta$. If either \eqref{algebraic-X-bar} or \eqref{one-place-curve} is true, then $g_{n+1}$ is a polynomial, and defines a curve $C$ as in \eqref{one-place-curve}.
\end{thm}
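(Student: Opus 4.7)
The strategy is to leverage Theorem \ref{effective-answer}, which states $\bar X$ is algebraic iff the last key form $g_{n+1}$ is a polynomial. First we reduce to the case $P = P_\infty$ (the center of $\pp^2$-infinity from Proposition \ref{prop-param}) by choosing the affine coordinates $(x,y)$ on $\cc^2$ so that $Q_y$ maps to $P$; such a coordinate system exists because, by Remark-Definition \ref{P^2-center}, the centers of $\pp^2$-infinity on $\bar X$ are precisely the images of $Q_y$ under different affine coordinate systems on $\cc^2$. All subsequent analysis takes place with respect to this fixed coordinate system.

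For $\eqref{algebraic-X-bar} \Rightarrow \eqref{one-place-curve}$: By Theorem \ref{effective-answer}, $g_{n+1} \in \cc[x,y]$. Theorem \ref{key-thm}(4) then gives the factorization $g_{n+1} = \prod_j (y - \psi_j(x))$, where the $\psi_j$ are conjugates of a single descending Puiseux series $\psi$ with $\psi \equiv_{r_\delta} \phi_\delta$. The single conjugacy class together with Gauss's lemma shows $g_{n+1}$ is irreducible in $\cc[x,y]$, so $C := \{g_{n+1} = 0\}$ is an irreducible algebraic curve whose projective closure in $\pp^2$ has a single analytic branch at a single point of $L_\infty$---i.e., $C$ has one place at infinity. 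Writing $\psi(t) = \tilde\phi_\delta(t,\xi)|_{\xi = c} + \ldt$ for $c$ the coefficient of $t^{r_\delta}$ in $\psi$, each branch of $C$ at infinity admits a parametrization of the form \eqref{dpuiseux-param}, so Proposition \ref{prop-param} implies this branch limits to a point of $C_\infty \setminus \{P_\infty\} = C_\infty \setminus \{P\}$ on $\bar X$. Thus $P$ does not lie on the closure of $C$ in $\bar X$; this simultaneously establishes the final sentence of the theorem.

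For $\eqref{one-place-curve} \Rightarrow \eqref{algebraic-X-bar}$: Let $f \in \cc[x,y]$ define $C$ as in \eqref{one-place-curve}. Having one place at infinity forces $C$ (and hence $f$, up to a nonzero scalar) to be irreducible: otherwise the normalization of the projective closure of $C$ would have at least two preimages over $L_\infty$, one contributed by each irreducible component. By Theorem \ref{dpuiseux-factorization}, $f = c\prod_j (y - \phi_j(x))$ with $\phi_j$ conjugates of a single descending Puiseux series $\phi$ (a nontrivial $x^m$-factor would contradict irreducibility). Since $P = P_\infty$ is not on the closure of $C$ in $\bar X$, Proposition \ref{prop-param} applied to the unique branch of $C$ at infinity yields $\phi \equiv_{r_\delta} \phi_\delta$. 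It remains to deduce that $g_{n+1}$ is a polynomial, after which Theorem \ref{effective-answer} completes the proof. To achieve this, I would compare $f$ with $g_{n+1}$ (or, when necessary, with a suitable product of key forms matching the polydromy of $\phi$) via their common descending Puiseux structure: both are monic in $y$ with factors matching $\phi_\delta$ modulo terms of degree $\leq r_\delta$. In the \emph{minimal} case where the polydromy of $\phi$ equals $\delta(x) = \deg_y g_{n+1}$, the difference $h := f - g_{n+1}$ has $\deg_y h < \delta(x)$, so by property \eqref{generating-property} of Definition \ref{key-defn} it admits an expansion $h = H(x, g_1, \ldots, g_n)$ with $H \in \cc[x, x^{-1}, z_1, \ldots, z_n]$ independent of $z_{n+1}$. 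An induction on the key-form recursion (property \eqref{next-property}), starting from polynomiality of $f$, then forces every $g_j$ for $0 \leq j \leq n+1$ to lie in $\cc[x,y]$. When the polydromy of $\phi$ exceeds $\delta(x)$---which, by the formal Puiseux pair structure, can only happen as a positive integer multiple $d\,\delta(x)$---the same strategy applies after replacing $g_{n+1}$ by a product of $d$ key forms of matching polydromy.

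The main obstacle is this last induction: property \eqref{semigroup-property} of Definition \ref{key-defn} explicitly permits $\beta_{j,0}$ to be negative, so a priori every $g_j$ may carry negative powers of $x$, and one has to show that the existence of a single polynomial witness $f$ forces all such negative powers to cancel simultaneously throughout the entire sequence. Careful bookkeeping through the key-form recursion---in particular, tracking how the constants $\theta_j$ in property \eqref{next-property} interact with the descending Puiseux expansions at each stage---is expected to be the longest technical step.
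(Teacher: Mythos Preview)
Your argument for \eqref{algebraic-X-bar} $\Rightarrow$ \eqref{one-place-curve} is correct and matches the paper's proof, with the added detail that $g_{n+1}$ is irreducible and has one place at infinity made explicit.

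For \eqref{one-place-curve} $\Rightarrow$ \eqref{algebraic-X-bar}, however, you are missing the key step. After using Proposition \ref{prop-param} to conclude that every descending Puiseux root of $f$ satisfies $\phi \equiv_{r_\delta} \phi_\delta$, the paper does \emph{not} attempt a direct comparison of $f$ with $g_{n+1}$. Instead it observes that $f$ now satisfies condition \eqref{cousin-polynomial'} of Lemma \ref{cousin-polynomial-lemma}, and then invokes Theorem \ref{cousin-answer} to conclude that all key forms are polynomial. That single citation replaces your entire proposed induction.

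Your sketched induction (``compare $f$ with $g_{n+1}$, expand the difference via property \eqref{generating-property}, and force all $g_j$ to be polynomial'') does not work as stated. Writing $h = f - g_{n+1}$ as $H(x,g_1,\ldots,g_n)$ with $H \in \cc[x,x^{-1},z_1,\ldots,z_n]$ gives no immediate contradiction: $H$ is allowed negative $x$-exponents, and the polynomiality of $f$ alone does not rule out that such negative powers in $g_{n+1}$ are exactly compensated by negative powers in $H(x,g_1,\ldots,g_n)$. Showing that this cancellation is impossible is precisely the content of the implication \eqref{cousin-polynomial'} $\Rightarrow$ \eqref{all-cousinomials} in Theorem \ref{cousin-answer}, whose proof occupies most of Section \ref{proof-section} and relies on the canonical pre-images of Lemma \ref{F-Phi}, the comparison Lemma \ref{comparison-lemma-2}, and the Gr\"obner-basis structure of Lemma \ref{poly-positive-prop}. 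Your ``careful bookkeeping through the key-form recursion'' would have to reproduce all of this machinery. So the fix is simple: after Proposition \ref{prop-param} gives you condition \eqref{cousin-polynomial'}, cite Theorem \ref{cousin-answer} and you are done.
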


Now we come to the question of characterization of augmented and marked dual graphs of the resolution of singularities of primitive normal analytic compactifications of $\cc^2$. For a primitive normal analytic compactification $\bar X$ of $\cc^2$, let $\Gamma_{\bar X}$ be the {\em augmented and marked} dual graph (from Thorem \ref{primitive-resolution-graph}) associated to the minimal $\pp^2$-dominating resolution of singularities of $\bar X$. Let $\scrG$ be the collection of $\Gamma_{\bar X}$ as $\bar X$ varies over all primitive normal analytic compactifications of $\cc^2$; note that assertions \eqref{graph-assertion} and \eqref{converse-graph} of Theorem \ref{primitive-resolution-graph} gives a complete description of $\scrG$. Pick $\Gamma \in \scrG$. Let $(q_1, p_1), \ldots, (q_{l+1}, p_{l+1})$ be the formal Puiseux pairs, and $\omega_0, \ldots, \omega_{l+1}$ be the sequence of {\em essential key values} of $\Gamma$ (Remark \ref{essential-value-remark}). Fix $k$, $1 \leq k \leq l$. The {\em semigroup conditions} for $k$ are:
\begin{gather}
p_k \omega_k \in \zz_{\geq 0} \langle \omega_0, \ldots, \omega_{k-1}\rangle .  \tag{S1-k} \label{semigroup-criterion-1} \\
(\omega_{k+1}, p_k \omega_k) \cap \zz\langle \omega_0, \ldots, \omega_k \rangle = (\omega_{k+1}, p_k \omega_k) \cap \zz_{\geq 0}\langle \omega_0, \ldots, \omega_k \rangle,\ \tag{S2-k} \label{semigroup-criterion-2}
\end{gather}
where $(\omega_{k+1}, p_k \omega_k): = \{a \in \rr: \omega_{k+1} < a < p_k \omega_k\}$ and $\zz_{\geq 0}\langle \omega_0, \ldots, \omega_k \rangle$ (respectively, $\zz \langle \omega_0, \ldots, \omega_k \rangle$) denotes the semigroup (respectively, group) generated by linear combinations of $\omega_0, \ldots, \omega_k$ with non-negative integer (respectively, integer) coefficients. 
\begin{thm} \label{graph-thm}
\mbox{}
\begin{enumerate}
\item \label{algebraic-graph} $\Gamma = \Gamma_{\bar X}$ for some primitive normal {\em algebraic} compactification $\bar X$ of $\cc^2$ iff the semigroup conditions \eqref{semigroup-criterion-1} hold for all $k$, $1 \leq k \leq l$.
\item \label{non-algebraic-graph} $\Gamma = \Gamma_{\bar X}$ for some primitive normal {\em non-algebraic} compactification $\bar X$ of $\cc^2$ iff either \eqref{semigroup-criterion-1} or \eqref{semigroup-criterion-2} {\em fails} for some $k$, $1 \leq k \leq l$.
\end{enumerate}
\end{thm}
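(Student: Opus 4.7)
The overall strategy is to translate the algebraicity criterion of Theorem~\ref{effective-answer} ($\bar X$ algebraic iff every key form $g_j$ of the associated semidegree $\delta$ is a polynomial) into conditions on the essential key values $\omega_0, \ldots, \omega_{l+1}$, which by Remark~\ref{essential-value-remark} are intrinsically determined by $\Gamma$. The first step I would establish is a \emph{polynomiality lemma}: assuming inductively that $g_0, \ldots, g_j \in \cc[x,y]$, the defining recursion
\[
g_{j+1} = g_j^{\alpha_j} - \theta_j\, g_0^{\beta_{j,0}} \cdots g_{j-1}^{\beta_{j,j-1}}
\]
shows that $g_{j+1}$ is a polynomial iff $\beta_{j,0} \geq 0$, since $g_0 = x$ is the only factor permitted to carry a negative exponent. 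The inductive hypothesis forces $\beta_{i,0} \geq 0$ in each earlier relation $\alpha_i \eta_i = \sum \beta_{i,i'} \eta_{i'}$, so reducing the coefficients of $\eta_i$ ($i \geq 1$) in the expansion of $\alpha_j \eta_j$ modulo these relations preserves the sign of the $\eta_0$-coefficient; hence $g_{j+1}$ is a polynomial iff $\alpha_j \eta_j \in \zz_{\geq 0}\langle \eta_0, \ldots, \eta_{j-1}\rangle$.

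Combined with Proposition~\ref{essential-value-prop}\eqref{essential-group} (which places $\eta_i \in \zz\langle \omega_0, \ldots, \omega_{k-1}\rangle$ for $i < j_k$), a further induction gives $\zz_{\geq 0}\langle \eta_0, \ldots, \eta_{j_k-1}\rangle = \zz_{\geq 0}\langle \omega_0, \ldots, \omega_{k-1}\rangle$ as soon as the earlier key forms are polynomials. Using $\alpha_{j_k} = p_k$, $\eta_{j_k} = \omega_k$ from Proposition~\ref{essential-value-prop}\eqref{alpha=p}, the polynomiality condition at an essential position $j_k$ is thus exactly \eqref{semigroup-criterion-1}; at a non-essential position $j$ with $j_k < j < j_{k+1}$ it becomes $\eta_j \in \zz_{\geq 0}\langle \omega_0, \ldots, \omega_k\rangle$, where $\eta_j$ automatically lies in $\zz\langle \omega_0, \ldots, \omega_k\rangle \cap (\omega_{k+1}, p_k \omega_k)$ by property~\ref{semigroup-property} of Definition~\ref{key-defn}.

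Part~\eqref{algebraic-graph} then falls out. The ``only if'' direction is immediate from Theorem~\ref{effective-answer}. For the ``if'' direction I would construct the desired $\bar X$ by exhibiting an explicit semidegree whose descending Puiseux series $\phi_\delta$ contains \emph{only} characteristic terms (one per Puiseux pair $(q_k,p_k)$) with generic coefficients, plus the generic trailing term $\xi x^{r_\delta}$; Algorithm~\ref{key-algorithm} then produces a key-form sequence with no non-essential entries, and each $g_{j_k+1}$ is polynomial by \eqref{semigroup-criterion-1} and the polynomiality lemma. Theorem~\ref{primitive-resolution-graph} identifies the graph of this compactification as $\Gamma$ and Theorem~\ref{effective-answer} identifies it as algebraic.

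For Part~\eqref{non-algebraic-graph}, the ``only if'' direction is again immediate: non-algebraicity of $\bar X$ gives a non-polynomial $g_j$, and the polynomiality lemma forces failure of \eqref{semigroup-criterion-1} at an essential position or of \eqref{semigroup-criterion-2} at the enclosing $k$ for a non-essential one. The ``if'' direction splits into two cases. If \eqref{semigroup-criterion-1} fails for some $k$, then the already-proved ``only if'' direction of Part~\eqref{algebraic-graph}, together with the assumption $\Gamma \in \scrG$ (which guarantees at least one primitive normal analytic compactification realizes $\Gamma$), forces any such compactification to be non-algebraic. The delicate case, which I expect to be the main obstacle, is when all \eqref{semigroup-criterion-1} hold but \eqref{semigroup-criterion-2} fails at some $k$: I would start from the algebraic semidegree constructed above, pick $a \in \zz\langle\omega_0,\ldots,\omega_k\rangle \cap (\omega_{k+1}, p_k \omega_k)$ outside the positive semigroup, translate $a$ via the formula of Proposition~\ref{essential-value-prop}\eqref{omega-from-p} back to a non-characteristic exponent $e$, and insert a generic term $c\,x^e$ into $\phi_\delta$. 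This leaves the formal Puiseux pairs (hence $\Gamma$) unchanged but introduces a non-essential key-form position with $\eta_j = a$, forcing non-polynomiality by the polynomiality lemma. The technical crux is verifying through Algorithm~\ref{key-algorithm} that every element of $\zz\langle\omega_0,\ldots,\omega_k\rangle \cap (\omega_{k+1}, p_k \omega_k)$ really does arise as some $\eta_j$ via a suitable choice of $e$ -- a bookkeeping argument on the precise bijection between the exponents in $\phi_\delta$ and the positions in the key-form sequence.
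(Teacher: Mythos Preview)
Your approach is correct and follows the paper's strategy closely: both proofs reduce to Theorem~\ref{effective-answer} and Proposition~\ref{key-properties}\eqref{key-semigroup}, and both construct explicit semidegrees realizing $\Gamma$ to establish the converse directions. Your ``polynomiality lemma'' is exactly assertion~\eqref{key-semigroup} of Proposition~\ref{key-properties}, and your observation that failure of \eqref{semigroup-criterion-1} forces \emph{every} realization of $\Gamma$ to be non-algebraic (via Part~\eqref{algebraic-graph} and $\Gamma\in\scrG$) is a slight streamlining of the paper's explicit check that $g^0_k$ is non-polynomial.

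The one place where the paper takes a cleaner route is the final case of Part~\eqref{non-algebraic-graph}, where all \eqref{semigroup-criterion-1} hold but \eqref{semigroup-criterion-2} fails at some $k$. You propose to insert an extra non-characteristic term $c\,x^e$ into $\phi_\delta$ and then verify via Algorithm~\ref{key-algorithm} that the desired bad value $a$ shows up as some $\eta_j$; you correctly flag this verification as the ``technical crux''. The paper sidesteps this bookkeeping entirely by invoking Theorem~\ref{key-thm}\eqref{key-to-degree} in the other direction: it simply writes down a candidate sequence of key forms $g^1_0,\ldots,g^1_{l+2}$ (obtained from the polynomial sequence $g^0_i$ by inserting a single extra step $g^1_{k+2} := g^0_{k+1} - \prod_j (g^0_j)^{\tilde\beta_j}$ with $\tilde\beta_0<0$, where $\tilde\omega=\sum\tilde\beta_j\omega_j$), checks properties \eqref{semigroup-property}--\eqref{next-property} by hand, and lets Theorem~\ref{key-thm}\eqref{key-to-degree} produce the semidegree. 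No analysis of Algorithm~\ref{key-algorithm} or of which exponents $e$ hit which values $a$ is needed, because the key forms and their $\delta$-values are \emph{prescribed} rather than \emph{computed}. Your Puiseux-series modification would work, but the direct key-form construction is shorter and avoids the correspondence you were anticipating having to prove.
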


\begin{remexample}
Note that if \eqref{semigroup-criterion-1} holds for all $k$, $1 \leq k \leq l$, but \eqref{semigroup-criterion-2} {\em fails} for some $k$, $1 \leq k \leq l$, then Theorem \ref{graph-thm} implies that there exist primitive normal analytic compactifications $\bar X_1,\bar X_2$ of $\cc^2$ such that $\bar X_1$ is algebraic, $\bar X_2$ is {\em not} algebraic, and $\Gamma = \Gamma_{\bar X_1} = \Gamma_{\bar X_2}$. Indeed, that is precisely what happens in the set up of Section \ref{non-example-section}: let $\Gamma$ be the augmented and marked dual graph corresponding to the minimal $\pp^2$-dominating resolution of singularities of $Y'_i$'s (Figure \ref{fig:non-graph-example}).
\begin{figure}[htp]

\centering
\begin{tikzpicture}[scale=1, font = \small] 

	\pgfmathsetmacro\factor{1.25}
 	\pgfmathsetmacro\dashedge{4*\factor}	
 	\pgfmathsetmacro\edge{.75*\factor}
 	\pgfmathsetmacro\vedge{.55*\factor}

 	\draw[thick] (-2*\edge,0) -- (\edge,0);
 	\draw[thick] (0,0) -- (0,-2*\vedge);
 	\draw[thick, dashed] (\edge,0) -- (\edge + \dashedge,0);
 	\draw[thick] (\edge + \dashedge,0) -- (2*\edge + \dashedge,0);
 	
 	\fill[black] ( - 2*\edge, 0) circle (3pt);
 	\fill[black] (-\edge, 0) circle (3pt);
 	\fill[black] (0, 0) circle (3pt);
 	\fill[black] (0, -\vedge) circle (3pt);
 	\fill[black] (0, - 2*\vedge) circle (3pt);
 	\fill[black] (\edge, 0) circle (3pt);
 	\fill[black] (\edge + \dashedge, 0) circle (3pt);
 	\fill[black] (2*\edge + \dashedge, 0) circle (3pt);
 	
 	\draw (-2*\edge,0)  node (e0up) [above] {$-1$};
 	\draw (-2*\edge,0 )  node (e0down) [below] {$L$};
 	\draw (-\edge,0 )  node (e2up) [above] {$-3$};
 	\draw (-\edge,0 )  node [below] {$E_2$};
 	\draw (0,0 )  node (e4up) [above] {$-2$};	
 	\draw (0,0 )  node [below left] {$E_4$};
 	\draw (0,-\vedge )  node (down1) [right] {$-2$};
 	\draw (0,-\vedge )  node [left] {$E_3$};
 	\draw (0, -2*\vedge)  node (down2) [right] {$-3$};
 	\draw (0,-2*\vedge )  node [left] {$E_1$};
 	\draw (\edge,0)  node (e+1-up) [above] {$-2$};
 	\draw (\edge,0)  node [below] {$E_5$};
 	\draw (\edge + \dashedge,0)  node (e-last-1-up) [above] {$-2$};
 	\draw (\edge + \dashedge,0)  node [below] {$E_{10}$};
 	\draw (2*\edge + \dashedge,0)  node (e-last-up) [above] {$-2$};
 	\draw (2*\edge + \dashedge,0)  node [below] {$E_{11}$};
 	
 	\pgfmathsetmacro\factorr{.2}
 	\draw [thick, decoration={brace, mirror, raise=15pt},decorate] (\edge + \edge*\factorr,0) -- (\edge + \dashedge - \edge*\factorr,0);
 	\draw (\edge + 0.5*\dashedge,-\edge) node [text width= 4.5cm, align = center] (extranodes) {string of vertices of weight $-2$};
 	
 	\draw[thick] (2*\edge + \dashedge,0) -- (3*\edge + \dashedge,0);
 	\fill[black] (3*\edge + \dashedge, 0) circle (3pt);
 	\draw (3*\edge + \dashedge,0)  node (e-last-up) [above] {$-1$};
 	\draw (3*\edge + \dashedge,0)  node [below] {$E^*$};
 			 	
\end{tikzpicture}
\caption{Augmented and marked dual graph of the minimal $\pp^2$-dominating resolution of singularities of $Y'_i$ from Section \ref{non-example-section}}
\label{fig:non-graph-example}
\end{figure}
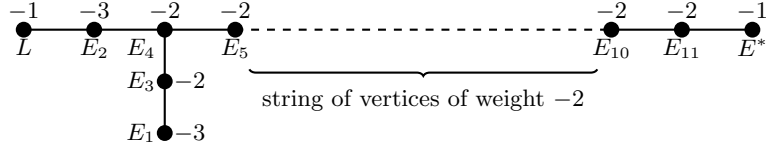
It follows from \eqref{non-phi-delta} that the formal Puiseux pairs associated to $\Gamma$ are $(2,5), (-6,1)$; in particular $l=1$. Example \ref{non-essential-example} implies that the sequence of essential key values of $\Gamma$ is $(5,2,2)$. It is straightforward to verify that \eqref{semigroup-criterion-1} is satisfied for $k =1$. On the other hand, 
$$3 \in (2,10) \cap \zz \langle 5, 2 \rangle \setminus  \zz_{\geq 0}\langle 5, 2 \rangle$$
so that \eqref{semigroup-criterion-2} is violated for $k = 1$. This implies that $\Gamma$ corresponds to both algebraic and non-algebraic normal compactifications of $\cc^2$, as we have already seen in Section \ref{non-example-section}.
\end{remexample}

\begin{remexample}
The following are straightforward corollaries of Theorem \ref{graph-thm} and the fact (which is a special case of \cite[Proposition 2.1]{herzog}) that if $p,q$ are relatively prime positive integers, then the greatest integer not belonging to $\zz_{\geq 0}\langle p, q\rangle$ is $ pq - p - q$. 
\begin{enumerate}
\item Pick relatively prime positive integers $p,q$ such that $p > q$. Then $\Gamma_{p,q}$ (defined as in \eqref{Gamma-p-q}) corresponds to only algebraic compactifications of $\cc^2$.

\item Pick integers $p,q,r$ such that $p,q$ are relatively prime, $p > q > 0$ and $q > r$. Set $l := 1$, $(q_1,p_1) := (q,p)$, $(q_2,p_2) := (r,1)$. Then \eqref{omega-defn} implies that $\omega_0 = p$, $\omega_1 = q$ and $\omega_2 = (p-1)q + r$. Assertion \eqref{converse-graph} of Theorem \ref{primitive-resolution-graph} therefore implies that $\Gamma_{\vec p,\vec q}$ corresponds to a compactification of $\cc^2$ iff $(p-1)q + r > 0$. So assume $q > r >- (p-1)q$.
\begin{enumerate}
\item If $r \geq -p$, then $\Gamma_{\vec p,\vec q}$ corresponds to only algebraic compactifications of $\cc^2$.
\item If $-p > r > - (p-1)q$, then $\Gamma_{\vec p,\vec q}$ corresponds to both algebraic and non-algebraic compactifications of $\cc^2$.
\end{enumerate}


\item Let $p_1,q_1,p_2$ be integers such that $p_1 > q_1 > 1$, $p_2 \geq 2$, $p_1$ is relatively prime to $q_1$, and $p_2$ is relatively prime to $p_1q_1 - p_1 - q_1$. Set $q_2 := p_1q_1 - p_1 - q_1 - q_1(p_1-1)p_2$, $q_3 := q_2 - 1$ and $p_3 := 1$. In this case $\omega_0 = p_1p_2$, $\omega_1 = q_1p_2$, $\omega_2 = p_1q_1 - p_1 - q_1$ and $\omega_3 = p_2\omega_2 - 1$. It follows that \eqref{semigroup-criterion-1} fails for $k = 2$ and therefore $\Gamma_{\vec p,\vec q}$ corresponds to {\em only non-algebraic} compactifications of $\cc^2$.
\end{enumerate}
\end{remexample}

Finally we formulate our answer to $N=1$ case of Question \ref{cousin-question}. Let $O \in L_\infty := \pp^2\setminus \cc^2$. Let $(u, v)$ be coordinates near $O$, $\psi(u)$ be a Puiseux series in $u$, and $r$ be a positive rational number. After a change of coordinates near $O$ if necessary, we may assume that the coordinate of $O$ is $(0,0)$ with respect to $(u,v)$-coordinates, and $(x,y):= (1/u, v/u)$ is a system of coordinates on $\pp^2 \setminus L_\infty \cong \cc^2$. Let
\begin{align*}
\phi(x) := x\psi(1/x)
\end{align*}
Note that $\phi(x)$ is a descending Puiseux series in $x$. Let $\xi$ be an indeterminate, and define, following Notation \ref{descending-notation},
$$\tilde \phi(x, \xi) := [\phi(x)]_{> 1-r} + \xi x^{1-r}$$
Let $\delta$ be the semidegree on $\cc[x,y] $ with generic descending Puiseux series $\tilde \phi$, and let $g_0, \ldots, g_{n+1} \in \cc[x,x^{-1}, y]$ be the key forms of $\delta$ (see Algorithm \ref{key-algorithm} for the algorithm to determine key forms of $\delta$ from $\tilde \phi$). 

\begin{thm}[Answer to $N=1$ case of Question \ref{cousin-question}] \label{cousin-answer}
The following are equivalent:
\begin{enumerate}
\item \label{cousin-polynomial} There exists a polynomial $f \in \cc[x,y]$ such that for each analytic branch $C$ of the curve $f = 0$ at infinity, 
\begin{itemize}
	\item $C$ intersects $L_\infty$ at $O$,
	\item  $C$ has a Puiseux expansion $v = \theta(u)$ at $O$ such that $\ord_{u}(\theta - \psi) \geq r$. 
\end{itemize}

\item \label{all-cousinomials} $g_j$ is a polynomial, $0 \leq j \leq n+1$.
\item \label{last-cousinomial} $g_{n+1}$ is a polynomial.
\end{enumerate}
Moreover, if any of the above conditions holds, then $g_{n+1}$ satisfies the properties of $f$ from condition \eqref{cousin-polynomial}.
\end{thm}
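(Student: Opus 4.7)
The plan is to reduce Theorem \ref{cousin-answer} to Theorem \ref{effective-answer} by interpreting condition \eqref{cousin-polynomial} as the algebraicity of the primitive normal compactification of $\cc^2$ singled out by the semidegree $\delta$. The chart change $(u,v)=(1/x,y/x)$ turns a local Puiseux branch $v=\theta(u)$ at $O$ into a descending Puiseux branch $y=x\theta(1/x)$ on $\cc^2$, and the inequality $\ord_u(\theta-\psi)\geq r$ becomes $\deg_x(x\theta(1/x)-\phi)\leq 1-r$. Hence $\tilde\phi(x,\xi)$ as defined is indeed the generic descending Puiseux series of a unique semidegree $\delta$ on $\cc[x,y]$ with $r_\delta=1-r$ and $\phi_\delta=[\phi]_{>1-r}$. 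Let $\bar X$ be the primitive normal analytic compactification of $\cc^2$ whose curve at infinity $E^*$ has order of pole $\delta$, and let $P_\infty \in E^*$ be the center of $\pp^2$-infinity. Combining Proposition \ref{prop-param} with this translation, a branch $\gamma$ at infinity on $\cc^2$ is centered on $E^*\setminus\{P_\infty\}$ iff the corresponding local expansion $\theta$ at $O$ satisfies $\ord_u(\theta-\psi)\geq r$. Thus \eqref{cousin-polynomial} is equivalent to: there exists a polynomial $f\in\cc[x,y]$ whose closure $\bar C_f$ in $\bar X$ misses $P_\infty$.

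For \eqref{last-cousinomial} $\Rightarrow$ \eqref{cousin-polynomial} together with the ``Moreover'' assertion, assume $g_{n+1}$ is a polynomial. By Theorem \ref{key-thm}(4) its descending Puiseux factorization is $g_{n+1}=\prod_j(y-\chi_j(x))$ with $\chi\equiv_{r_\delta}\phi_\delta$; the dictionary then shows every branch of $\{g_{n+1}=0\}$ at infinity satisfies the hypothesis of \eqref{cousin-polynomial}, so $f:=g_{n+1}$ does the job. The implication \eqref{last-cousinomial} $\Rightarrow$ \eqref{all-cousinomials} is the Abhyankar--Moh-style observation underlying the analogous step in Theorem \ref{effective-answer}: each $g_j$ with $j\leq n$ can be recovered as an iterated ``approximate root'' of $g_{n+1}$ through the recursion in property (P2) of Definition \ref{key-defn}, so polynomiality propagates backward once $g_{n+1}$ lies in $\cc[x,y]$. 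Finally, \eqref{all-cousinomials} $\Rightarrow$ \eqref{last-cousinomial} is trivial.

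The heart of the argument is \eqref{cousin-polynomial} $\Rightarrow$ \eqref{last-cousinomial}. By Theorem \ref{effective-answer} it suffices to show that $\bar X$ is algebraic. Starting from $f$ as in \eqref{cousin-polynomial}, one first factors $f$ in $\cc[x,x^{-1},y]$ via Theorem \ref{dpuiseux-factorization} as $f=\prod_i f_i$ with each $f_i$ monic in $y$ and having descending Puiseux roots $\chi^{(i)}\equiv_{r_\delta}\phi_\delta$; since the $f_i$ are monic in $y$, none of them can absorb a positive power of $x$, so the absence of negative powers in $f$ forces each $f_i$ to lie already in $\cc[x,y]$. From such an $f_i$ one then constructs a base-point-free linear system of divisors on $\bar X$ disjoint from $P_\infty$ (using $f_i^k$ and polynomial multiplication on $\cc^2$) that yields a projective embedding of $\bar X$, producing the desired algebraicity.

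The main obstacle is exactly this last step: singular normal surfaces can be Moishezon without being algebraic (as illustrated by the non-example of Section \ref{non-example-section}), so positivity of the self-intersection of $E^*$ must be supplemented by a genuine separating argument. The cleanest route is to identify one of the $f_i$ with the canonical last key form $g_{n+1}$ (up to the equivalence class of descending Puiseux truncations allowed by $\equiv_{r_\delta}$) and then invoke the explicit weighted-projective embedding $(x,y)\mapsto[1:g_0:\cdots:g_{n+1}]$ given in the ``Moreover'' of Theorem \ref{effective-answer}.
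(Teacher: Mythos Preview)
Your reduction to Theorem \ref{effective-answer} is circular in the paper's logical structure: Theorem \ref{effective-answer} is \emph{deduced from} Theorem \ref{cousin-answer}, not the other way around. Both the equivalence \eqref{all-polynomials} $\Leftrightarrow$ \eqref{last-polynomial} and the implication \eqref{algebraic-Y'} $\Rightarrow$ \eqref{all-polynomials} in Theorem \ref{effective-answer} are obtained by quoting Theorem \ref{cousin-answer}. So invoking Theorem \ref{effective-answer} here assumes precisely what you are trying to prove.

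There is also a genuine geometric gap in your setup: you write ``Let $\bar X$ be the primitive normal analytic compactification of $\cc^2$ whose curve at infinity has order of pole $\delta$,'' but such an $\bar X$ exists only when $\delta(g_{n+1})>0$ (Remark \ref{analytic-contractibility}). Theorem \ref{cousin-answer} carries no such hypothesis, so in general there is no compactification to which Proposition \ref{prop-param} or Theorem \ref{effective-answer} can be applied. The paper's proof of \eqref{cousin-polynomial} $\Rightarrow$ \eqref{all-cousinomials} avoids this by working purely algebraically: it assumes some $g_{m+1}$ is not a polynomial, truncates via Proposition \ref{key-properties}\eqref{truncassertion} to make it the last one, and then derives a contradiction from the descending Puiseux factorization of $f$ using the comparison Lemma \ref{comparison-lemma-2} together with the Gr\"obner basis statement of Lemma \ref{poly-positive-prop}. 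That computation is the actual content of the theorem; your ``cleanest route'' of identifying one of the $f_i$ with $g_{n+1}$ is exactly this content, not a shortcut around it.

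Two smaller points. First, your claim that the monic-in-$y$ factors $f_i$ of $f$ in $\cc[x,x^{-1},y]$ automatically lie in $\cc[x,y]$ is false in general (e.g.\ $f=xy-1$ gives $f_1=y-x^{-1}$); the branch condition does not obviously repair this. Second, your \eqref{last-cousinomial} $\Rightarrow$ \eqref{all-cousinomials} via ``iterated approximate roots through property (P2)'' is not how the recursion works: (P2) builds $g_{j+1}$ from $g_0,\ldots,g_j$, not the reverse, and for non-essential steps ($\alpha_j=1$) the subtracted monomial may carry a negative power of $x$, so polynomiality of $g_{j+1}$ alone does not force polynomiality of $g_j$. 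In the paper this implication is obtained only by passing through \eqref{cousin-polynomial'} and the hard direction above.
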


\section{Background II: notions required for the proof} \label{secondground}
In this section we collect more background material we use in the proof of the results stated in Section \ref{result-section}. 

\subsection{Key forms from descending Puiseux series} \label{essential-section}
Let $\delta$ be a semidegree on $\cc[x,y]$ such that $\delta(x) > 0$. Assume the generic descending Puisuex series for $\delta$ is 
\begin{align*}
\tilde \phi_\delta(x,\xi) 
	&:= \sum_{j=1}^{k'_0} a_{0j}x^{q_{0j}} + a_1 x^{\frac{q_1}{p_1}} + \cdots
		 +  a_2 x^{\frac{q_2}{p_1p_2}} + \cdots + a_{l} x^{\frac{q_{l}}{p_1p_2 \cdots p_{l}}} 
		 +  \xi x^{\frac{q_{l + 1}}{p_1p_2 \cdots p_{l+1}}} 
\end{align*}
where $q_{01} > \cdots > q_{0k'_0}$ are integers greater than $q_1/p_1$, $k'_0 \geq 0$, and $(q_1, p_1), \ldots, (q_{l + 1}, p_{l+1})$ are the {\em formal} Puiseux pairs of $\tilde \phi_\delta$ (Definition \ref{formal-defn}). Let $g_0 = x, g_1 = y, \ldots, g_{n+1} \in \cc[x,x^{-1},y]$ be the key forms of $\delta$. Recall from Proposition \ref{essential-value-prop} that precisely $l+2$ of the key forms of $\delta$ are {\em essential}. Let $0 = j_0 < \cdots < j_{l+1} = n+1$ be the subsequence $(0, \ldots, n)$ consisting of indices of essential key forms of $\delta$.

\begin{algorithm}[Construction of key forms from descending Puiseux series, cf.\ the algorithm in \cite{lenny}] \label{key-algorithm}
\mbox{}
\paragraph{1. Base step:} Set $j_0 := 0$, $g_0 := x,$, $g_1 := y$. Also define $p_0 := 1$.

Now assume
\begin{enumerate}[(i)]
\item $g_0, \ldots, g_s$ have been calculated, $s \geq 1$. 
\item $j_0, \ldots, j_k$ have been calculated, $k \geq 0$. 
\item $j_k < s \leq j_{k+1}$.
\end{enumerate}

\paragraph{2. Inductive step for $(s,k)$:} Let 
\begin{align*}
\tilde \omega_s
	&:= \deg_x \left(g_s|_{y = \tilde \phi_\delta(x,\xi)}\right) \\
\tilde c_s
	&:= \text{coefficient of $x^{\tilde \omega_s}$ in $g_s|_{y = \tilde \phi_\delta(x,\xi)}$}
\end{align*}
\paragraph{Case 2.1:} If $\tilde c_s \in \cc[\xi]\setminus \cc$, then set $n := s-1$, $j_{k+1} := s$, and stop the process.\\

\paragraph{Case 2.2:} Otherwise if $\tilde \omega_s \in \frac{1}{p_0 \cdots p_k}\zz$, then there are unique integers $\beta^s_0, \cdots, \beta^s_k$ and unique $c \in \cc^*$ such that 
\begin{compactenum}
\item $0 \leq \beta^s_i < p_i$ for $1 \leq i \leq k$,
\item $\sum_{i=0}^k \beta^s_i \tilde \omega_{j_i} = \tilde \omega_s$, and
\item the coefficient of $x^{\tilde \omega_s}$ in $cg_{j_0}^{\beta^s_0} \cdots g_{j_k}^{\beta^s_k}|_{y = \tilde \phi_\delta(x,\xi)}$ is $\tilde c_s$.
\end{compactenum}
Then set $g_{s+1} := g_s - cg_{j_0}^{\beta^s_0} \cdots g_{j_k}^{\beta^s_k}$, and repeat Inductive step for $(s+1,k)$. \\

\paragraph{Case 2.3:} Otherwise $\tilde \omega_s \in \frac{1}{p_0 \cdots p_{k+1}}\zz\setminus \frac{1}{p_0 \cdots p_k}\zz$, and there are unique integers $\beta^s_0, \cdots, \beta^s_k$ and unique $c \in \cc^*$ such that 
\begin{compactenum}
\item $0 \leq \beta^s_i < p_i$ for $1 \leq i \leq k$,
\item $\sum_{i=0}^k \beta^s_i \tilde \omega_{j_i} = p_{k+1}\tilde \omega_s$, and
\item the coefficient of $x^{\tilde \omega_s}$ in $cg_{j_0}^{\beta^s_0} \cdots g_{j_k}^{\beta^s_k}|_{y = \tilde \phi_\delta(x,\xi)}$ is $(\tilde c_s)^{p_{k+1}}$.
\end{compactenum}
Then set $j_{k+1} := s$, $g_{s+1} := g_s^{p_{k+1}} - cg_{j_0}^{\beta^s_0} \cdots g_{j_k}^{\beta^s_k}$, and repeat Inductive step for $(s+1,k+1)$. 
\end{algorithm}

\begin{example} \label{essential-example}
Let $\tilde \phi_\delta(x,\xi) := x^3 + x^2+ x^{5/3} + x + x^{-13/6} + x^{-7/3} + \xi x^{-8/3}$. The formal Puiseux pairs of $\tilde \phi_\delta$ are $(5,3), (-13,2), (-16,1)$. We compute the key forms of $\delta$ following Algorithm \ref{key-algorithm}: by definition we have $g_0 = x$, $g_1 = y$, $j_0 = 0$. Since the exponents of $x$ in the first two terms of $\tilde \psi_\delta$ are integers, subsequent applications of Case 2.2 of Algorithm \ref{key-algorithm} implies that the next two key forms are $g_2 = y - x^3$ and $g_3 = y - x^3 - x^2$. Note that 
\begin{align}
	g_3|_{y = \tilde \psi_\delta} &= x^{5/3} + x + x^{-13/6} + x^{-7/3} + \xi x^{-8/3},\label{g_3-substitution}
\end{align}
In the notation of Algorithm \ref{key-algorithm}, we have $\tilde \omega_3 = 5/3 \not\in \zz$. It follows that $j_1 = 3$. Since 
\begin{align}
g_3^3|_{y = \tilde \psi_\delta} 
	&=  x^5 + 3x^{13/3} + 3x^{11/3} + x^3 + 3x^{7/6} + 3x + 3\xi x^{2/3} + \ldt,
	\label{g_3^3-substitution}
\end{align}
(where $\ldt$ denotes terms with smaller degree in $x$), Case 2.3 of Algorithm \ref{key-algorithm} implies that $g_4 = g_3^3 - x^5$. Now note that $13/3 = 1 + 2\cdot(5/3)$ and $11/3 = 2 + 5/3$, so that \eqref{g_3-substitution} and \eqref{g_3^3-substitution} imply that
\begin{align*}
g_4|_{y = \tilde \psi_\delta} 
	&=  3x\left(g_3|_{y = \tilde \psi_\delta} - x - x^{-13/6} - x^{-7/3} - \xi x^{-8/3}\right)^2  
	+ 3x^2\left(g_3|_{y = \tilde \psi_\delta} - x  \right.  \\
	& \qquad  \left.  \mbox{\phantom{$g_3|_{y = \tilde \psi_\delta}$}} - x^{-13/6} - x^{-7/3} - \xi x^{-8/3}\right)
		+ x^3 + 3x^{7/6} + 3x + 3\xi x^{2/3} + \ldt\\
	&= 3xg_3^2|_{y = \tilde \psi_\delta} - 3x^2g_3|_{y = \tilde \psi_\delta} + x^3 + 3x^{7/6} + 3x + 3\xi x^{2/3} + \ldt,
\end{align*}
Repeated applications of Case 2.2 of Algorithm \ref{key-algorithm} then imply that 
\begin{align*}
g_5 &= g_4 - 3xg_3^2\\
g_6 &= g_4 - 3xg_3^2 - 3x^2g_3 \\
g_7 &= g_4 - 3xg_3^2 - 3x^2g_3 - x^3.
\end{align*}
Note that 
\begin{align}
g_7|_{y = \tilde \psi_\delta} 
	&= 3x^{7/6} + 3x + 3\xi x^{2/3} + \ldt \label{g_7-substitution}
\end{align}
Since $\tilde \omega_7 = 7/6 \not\in \frac{1}{3}\zz$, following Case 2.3 of Algorithm \ref{key-algorithm} we have $j_2 = 7$. Since $p_2 = 2$, we compute 
\begin{align*}
g_7^2|_{y = \tilde \psi_\delta} 
	&=  9x^{7/3} + 18x^{13/6} + 18\xi x^{11/6} +  \ldt,
\end{align*}
Since $7/3 = -1 + 2\cdot(5/3) + 0\cdot(7/6)$ and $13/6 = 1 + 0\cdot(5/3) + 7/6$, identities \eqref{g_3-substitution} and \eqref{g_7-substitution} imply that
\begin{align*}
g_7^2|_{y = \tilde \psi_\delta} 
		&= 9x^{-1}\left(g_3|_{y = \tilde \psi_\delta} - x - x^{-13/6} - x^{-5/2} - \xi x^{-8/3}\right)^2  + 6x\left(g_7|_{y = \tilde \psi_\delta}\right. \\
		& \quad \left. \mbox{\phantom{$g_7|_{y = \tilde \psi_\delta}$}}  - 3x - 3\xi x^{2/3} - \ldt\right) + 18\xi x^{11/6} + \ldt \\
		& = 9x^{-1} g_3^2|_{y = \tilde \psi_\delta}  + 6xg_7|_{y = \tilde \psi_\delta}  -18x^2+ 18\xi x^{11/6} + \ldt,
\end{align*}
Cases 2.3 and 2.2 of Algorithm \ref{key-algorithm} then imply that the next key forms are 
\begin{align*}
g_8 &= g_7^2 - 9x^{-1} g_3^2 \\
g_9 &= g_7^2 - 9x^{-1} g_3^2 - 6xg_7 \\
g_{10} &= g_7^2 - 9x^{-1} g_3^2 - 6xg_7 + 18x^2
\end{align*}
Since 
\begin{align*}
g_{10}|_{y = \tilde \psi_\delta} = 18\xi x^{11/6} + \ldt,
\end{align*}
Case 2.1 of Algorithm \ref{key-algorithm} implies that $g_{10}$ is the last key form of $\delta$, and $n = 9$, $j_3 = 10$. In particular, note that there are precisely $4$ essential key forms (namely $g_0, g_3, g_7,g_{10}$) of $\delta$, as predicted by Proposition \ref{essential-value-prop}.
\end{example}

The assertions of the following proposition are straightforward implications of Algorithm \ref{key-algorithm}. 

\begin{prop} \label{key-properties}
Let $\delta$ be a semidegree on $\cc[x,y]$ such that $\delta(x) > 0$. Let $g_0, \ldots, g_{n+1}$ be key forms and $\tilde \phi_\delta(x,\xi) := \phi_\delta(x) + \xi x^{r_\delta}$ be the generic descending Puiseux series of $\delta$, 
\begin{enumerate}
\item \label{truncassertion} Let $n_* \leq n$ and let $\delta_*$ be the unique semidegree such that the key forms of $\delta_*$ are $g_0, \ldots, g_{n_*+1}$ and $\delta^*(g_j) = \delta(g_j)/e$, $0 \leq j \leq n_*+1$, where $e := \gcd\left(\delta(g_0), \ldots, \delta(g_{n_*+1})\right)$. Then $\delta_*$ has a generic descending Puiseux series of the form 
$$\tilde \phi_{\delta_*}(x,\xi) = \phi_*(x) + \xi x^{r_*},$$
where 
\begin{compactenum}
\item $r_* \geq r_\delta$, and
\item $\phi_*(x) = [\phi_\delta(x)]_{> r_*}$.
\end{compactenum}
\item \label{key-semigroup} Let $\alpha_i$, $1 \leq i \leq n$, be the smallest positive integer such that $\alpha_i\delta(g_i)$ is in the (abelian) group generated by $\delta(g_0), \ldots, \delta(g_{i-1})$.  Fix $m$, $0 \leq m \leq n$. Recall that each $g_i$ is an element in $\cc[x,x^{-1},y]$ which is monic in $y$. The following are equivalent:
\begin{enumerate}
\item $g_i$ is a polynomial, $0 \leq i \leq m+1$.
\item For each $i$, $1 \leq i \leq m$, $\alpha_i\delta(g_i)$ is in the semigroup generated by $\delta(g_0), \ldots, \delta(g_{i-1})$.
\end{enumerate}
\end{enumerate}
\end{prop}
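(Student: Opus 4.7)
The plan is to prove the two assertions separately, leveraging the bijection between key forms, their values, and the generic descending Puiseux series encoded in Algorithm~\ref{key-algorithm} and in the defining properties of Definition~\ref{key-defn}.

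For assertion~\eqref{truncassertion}, the conceptual picture is that truncating the sequence of key forms corresponds to truncating $\tilde\phi_\delta$ at a higher exponent, modulo a global rescaling of $\delta$-values by $e$. I would first observe that the defining properties \eqref{semigroup-property}--\eqref{next-property} of the key forms of $\delta$ involve only integer linear relations among the values $\delta(g_j)$ together with polynomial identities among the $g_j$'s; since these relations are homogeneous in the values, dividing all of $\delta(g_0),\ldots,\delta(g_{n_*+1})$ by their common gcd $e$ preserves them. Thus Theorem~\ref{key-thm}\eqref{key-to-degree} produces the asserted $\delta_*$. Next, I would identify the essential key forms of $\delta_*$: their indices form the prefix of those of $\delta$ consisting of all $j_k \leq n_*$, with $g_{n_*+1}$ itself appended as the final essential key form by convention. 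Via Proposition~\ref{essential-value-prop}\eqref{omega-from-p}, this prefix of essential key values determines the formal Puiseux pairs of $\tilde\phi_{\delta_*}$, which form a prefix of those of $\tilde\phi_\delta$ with only the generic (last) pair possibly altered. The identity $\phi_* = [\phi_\delta]_{>r_*}$ is then forced by Theorem~\ref{key-thm}\eqref{last-expansion}: the descending Puiseux factorisation of the common last key form $g_{n_*+1}$, read in terms of $\delta$ and of $\delta_*$, yields two Puiseux series whose conjugate factorisations coincide, and therefore agree in every term of exponent $>r_*$. The main technical obstacle is the bookkeeping required when $g_{n_*+1}$ is \emph{not} essential for the original $\delta$ but becomes essential for $\delta_*$ by convention; one must verify that the resulting $r_*$ equals the exponent dictated by the new generic Puiseux pair and satisfies $r_* \geq r_\delta$, which ultimately rests on tracing through Algorithm~\ref{key-algorithm} to see that a new essential pair only arises at an exponent already present in $\tilde\phi_\delta$ strictly above $r_\delta$.

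For assertion~\eqref{key-semigroup}, I would argue by induction on $m$ directly from the recursion in Property~\eqref{next-property}:
\[
g_{j+1} \;=\; g_j^{\alpha_j} \;-\; \theta_j\, g_0^{\beta_{j,0}} g_1^{\beta_{j,1}} \cdots g_{j-1}^{\beta_{j,j-1}}.
\]
Because $g_0 = x$, $g_1 = y$, and $0 \leq \beta_{j,i} < \alpha_i$ for $i \geq 1$ by Property~\eqref{semigroup-property}, the sole source of negative powers of $x$ in $g_{j+1}$, under the inductive assumption that $g_0, \ldots, g_j \in \cc[x,y]$, is the exponent $\beta_{j,0}$. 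Moreover, the constraints $0 \leq \beta_{j,i} < \alpha_i$ for $1 \leq i \leq j-1$ together with the equation $\alpha_j\delta(g_j) = \sum_i \beta_{j,i}\delta(g_i)$ determine the $\beta_{j,i}$'s uniquely as a mixed-radix expansion with moduli $\alpha_1,\ldots,\alpha_{j-1}$, so $\beta_{j,0} \geq 0$ is equivalent to $\alpha_j\delta(g_j) \in \zz_{\geq 0}\langle \delta(g_0),\ldots,\delta(g_{j-1})\rangle$. The inductive step therefore goes through; the base case $m=0$ is trivial since $g_1 = y \in \cc[x,y]$ and there is no semigroup condition to check.
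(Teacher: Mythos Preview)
Your treatment of assertion~\eqref{key-semigroup} is correct and is exactly what the paper has in mind (the paper's entire proof is the sentence ``straightforward implications of Algorithm~\ref{key-algorithm}'', and the recursion in Property~\eqref{next-property} is what that algorithm produces). One point worth making explicit: the backward direction of your equivalence ``$\beta_{j,0}\geq 0 \iff \alpha_j\delta(g_j)\in\zz_{\geq 0}\langle\delta(g_0),\ldots,\delta(g_{j-1})\rangle$'' is not pure mixed-radix arithmetic. Reducing an arbitrary non-negative combination $\sum\gamma_i\delta(g_i)$ to the unique constrained form uses the relations $\alpha_i\delta(g_i)=\sum_{i'<i}\beta_{i,i'}\delta(g_{i'})$, and those carries keep $\gamma_0$ non-negative only because $\beta_{i,0}\geq 0$ for each $i<j$ --- which is precisely your inductive hypothesis. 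You do invoke induction, so this is implicit, but it is the actual content of the step.

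Your route to assertion~\eqref{truncassertion} has a gap. After producing $\delta_*$ via Theorem~\ref{key-thm}\eqref{key-to-degree} and reading off its formal Puiseux pairs via Proposition~\ref{essential-value-prop}, you still need the \emph{coefficients} of $\phi_*$ to match those of $\phi_\delta$ above $r_*$. You appeal to Theorem~\ref{key-thm}\eqref{last-expansion}, but that theorem applied to $\delta_*$ only says that the Puiseux root $\psi$ of $g_{n_*+1}$ satisfies $[\psi]_{>r_*}=\phi_*$; it says nothing relating $\psi$ to $\phi_\delta$, because $g_{n_*+1}$ is \emph{not} the last key form of $\delta$, so the theorem does not apply on that side. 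The phrase ``read in terms of $\delta$ and of $\delta_*$'' hides exactly the statement you are trying to prove.

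The argument the paper intends, directly from Algorithm~\ref{key-algorithm}, avoids this. Running the algorithm on $\tilde\phi_\delta$, the data $(\tilde\omega_s,\tilde c_s)$ used at step~$s$ depend only on the terms of $\tilde\phi_\delta$ of $x$-degree $\geq\tilde\omega_s$, and these degrees decrease along the run. Set $r_*:=\tilde\omega_{n_*+1}$; then the construction of $g_0,\ldots,g_{n_*+1}$ uses only $[\phi_\delta]_{>r_*}$. Re-running the algorithm on $[\phi_\delta]_{>r_*}+\xi x^{r_*}$ therefore reproduces $g_0,\ldots,g_{n_*+1}$ verbatim, and at step $n_*+1$ the leading coefficient now involves $\xi$, so Case~2.1 halts. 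This exhibits $[\phi_\delta]_{>r_*}+\xi x^{r_*}$ as the generic descending Puiseux series of $\delta_*$ (after the gcd rescaling), giving both $\phi_*=[\phi_\delta]_{>r_*}$ and $r_*\geq r_\delta$ at once.
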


\subsection{Degree-like functions and compactifications} \label{degree-like-section}
In this subsection we recall from \cite{sub1} the basic facts of compactifications of affine varieties via {\em degree-like functions}. Recall that $X = \cc^2$ in our notation; however the results in this subsection remains valid if $X$ is an arbitrary affine variety.

\begin{defn} \label{degree-like-defn}
A map $\delta: \cc[X] \setminus \{0\} \to \zz$ is called a {\em degree-like function} if 
\begin{compactenum}
\item \label{deg1} $\delta(f+g) \leq \max\{\delta(f), \delta(g)\}$ for all $f, g \in \cc[X]$, with $<$ in the preceding inequality implying $\delta(f) = \delta(g)$.
\item \label{deg2} $\delta(fg) \leq \delta(f) + \delta(g)$ for all $f, g \in \cc[X]$.
\end{compactenum}
\end{defn}

Every degree-like function $\delta$ on $\cc[X]$ defines an {\em ascending filtration} $\{\scrF^\delta_d\}_{d \geq 0}$ on $\cc[X]$, where $\scrF^\delta_d := \{f \in \cc[X]: \delta(f) \leq d\}$. Define
$$ \cc[X]^\delta := \dsum_{d \geq 0} \scrF^\delta_d, \quad \gr  \cc[X]^\delta := \dsum_{d \geq 0} \scrF^\delta_d/\scrF^\delta_{d-1}.$$

\begin{rem} \label{(f)_d-remark}
For every $f \in \cc[X]$, there are infinitely many `copies' of $f$ in $\cc[X]^\delta$, namely the copy of $f$ in $\scrF^\delta_d$ for {\em each $d \geq \delta(f)$}; we denote the copy of $f$ in $\scrF^\delta_d$ by $(f)_d$. If $t$ is a new indeterminate, then 
$$\cc[X]^\delta \cong \sum_{d \geq 0} \scrF^\delta_d t^d,$$
via the isomorphism $(f)_d \mapsto ft^d$. Note that $t$ corresponds to $(1)_1$ under this isomorphism.
\end{rem}

We say that $\delta$ is {\em finitely-generated} if $\cc[X]^\delta$ is a finitely generated algebra over $\cc$ and that $\delta$ is {\em projective} if in addition $\scrF^\delta_0 = \cc$. The motivation for the terminology comes from the following straightforward

\begin{prop}[{\cite[Proposition 2.8]{sub1}}] \label{basic-prop}
If $\delta$ is a projective degree-like function on $\cc[X]$, then $\xdelta := \proj \cc[X]^\delta$ is a projective compactification of $X$. The {\em hypersurface at infinity} $\xdelta_\infty := \xdelta \setminus X$ is the zero set of the $\qq$-Cartier divisor defined by $(1)_1$ and is isomorphic to $\proj \gr \cc[X]^\delta$. Conversely, if $\bar X$ is any projective compactification of $X$ such that $\bar X \setminus X$ is the support of an effective ample divisor, then there is a projective degree-like function $\delta$ on $\cc[X]$ such that $\xdelta \cong \bar X$.
\end{prop}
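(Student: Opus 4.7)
The plan is to prove the two implications separately. For the forward direction, I would first check that the axioms of a degree-like function make $\cc[X]^\delta = \bigoplus_{d \geq 0} \scrF^\delta_d$ into a graded $\cc$-algebra: axiom (2) of Definition \ref{degree-like-defn} gives $\scrF^\delta_d \cdot \scrF^\delta_e \subseteq \scrF^\delta_{d+e}$, so multiplication respects the grading. The projectivity assumption — finite generation together with $\scrF^\delta_0 = \cc$ — then makes $\xdelta := \proj \cc[X]^\delta$ a projective variety over $\cc$. To identify $X$ with the open affine chart $D_+((1)_1)$, I would use the isomorphism $\cc[X]^\delta \cong \bigoplus_d \scrF^\delta_d t^d$ of Remark \ref{(f)_d-remark}, under which $(1)_1$ corresponds to $t$; the degree-zero part of the localization $\cc[X]^\delta[t^{-1}]$ then recovers all of $\cc[X]$, because every $f \in \cc[X]$ occurs as $(f)_{\delta(f)} = f t^{\delta(f)}$, which after inverting $t$ yields $f$ in degree zero. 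The complement $\xdelta_\infty = V_+((1)_1)$ is by definition $\proj(\cc[X]^\delta/((1)_1))$; modding out by $(1)_1$ collapses each $\scrF^\delta_d$ onto $\scrF^\delta_d/\scrF^\delta_{d-1}$, so this $\proj$ is exactly $\proj \gr \cc[X]^\delta$. Since $\cc[X]^\delta$ need not be generated in degree one, $(1)_1$ is in general only $\qq$-Cartier, becoming Cartier after passing to a suitable Veronese.

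For the converse, let $D$ be an effective ample divisor on $\bar X$ supported on $\bar X \setminus X$. After replacing $D$ by a sufficiently divisible multiple, I may assume that the section ring $R := \bigoplus_{d \geq 0} H^0(\bar X, \scrO_{\bar X}(dD))$ satisfies $\bar X \cong \proj R$; finite generation of $R$ follows from the existence of a very ample multiple of $D$ (which embeds $\bar X$ in some $\pp^N$, so that a Veronese of $R$ is a finitely generated homogeneous coordinate ring, and hence $R$ itself is finitely generated). Each $H^0(\bar X, \scrO(dD))$ injects into $\cc(\bar X) = \cc(X)$, and because $D$ is supported at infinity, every such section restricts to a regular function on $X$. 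I would therefore set $\scrF^\delta_d := H^0(\bar X, \scrO(dD)) \subseteq \cc[X]$ and $\delta(f) := \min\{d \geq 0 : f \in \scrF^\delta_d\}$. Axiom (1) is automatic because $\scrF^\delta_d$ is a $\cc$-vector subspace; axiom (2) follows from the multiplication map $\scrO(dD) \otimes \scrO(eD) \to \scrO((d+e)D)$; projectivity and properness give $\scrF^\delta_0 = H^0(\bar X, \scrO_{\bar X}) = \cc$; and finite generation of $R$ yields finite generation of $\cc[X]^\delta$. The identification $\xdelta \cong \bar X$ is then immediate since $\cc[X]^\delta$ and $R$ coincide as graded rings.

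The most delicate point will be verifying, in the converse, that the filtration one extracts from $\bar X$ really returns the original $R$ — that is, that $H^0(\bar X, \scrO(dD))$ is recovered in full upon intersection with $\cc[X]$, and that issues about whether $R$ is generated in degree one (as opposed to some higher Veronese) do not obstruct the stated isomorphism $\xdelta \cong \bar X$. Both concerns should be handled by standard ampleness arguments — Serre vanishing and Veronese re-embedding — after which the rest of the proposition reduces to routine graded-ring bookkeeping.
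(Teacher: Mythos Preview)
The paper does not actually prove this proposition: it is quoted verbatim from \cite[Proposition 2.8]{sub1} and introduced only as ``the following straightforward'' result, with no argument given. So there is no proof in the paper to compare against.

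That said, your sketch is correct and is exactly the standard argument one would expect. The forward direction is routine Proj bookkeeping once one uses the Rees-type description $\cc[X]^\delta \cong \sum_d \scrF^\delta_d t^d$ from Remark~\ref{(f)_d-remark}; your identification of $D_+((1)_1)$ with $X$ and of $V_+((1)_1)$ with $\proj \gr \cc[X]^\delta$ is the right one. For the converse, your choice $\scrF^\delta_d := H^0(\bar X, \scrO_{\bar X}(dD))$ and $\delta(f) := \min\{d: f \in \scrF^\delta_d\}$ is the natural one, and the point you flag as ``most delicate'' is not actually an obstruction: since $D$ is effective the spaces $H^0(\bar X,\scrO(dD))$ are nested, so $\{f: \delta(f) \leq d\}$ really does return $H^0(\bar X,\scrO(dD))$; and since $\supp D = \bar X \setminus X$, every regular function on $X$ has poles only along $\supp D$ and hence lies in $H^0(\bar X,\scrO(dD))$ for $d \gg 0$, so $\delta$ is defined on all of $\cc[X]\setminus\{0\}$. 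The Veronese issue is likewise harmless because $\proj$ is unchanged under passage to a Veronese subring.
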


\begin{rem} \label{semi-remark}
The {\em semidgree}, which we already defined in Section \ref{divisorial-section}, is a degree-like function which always satisfies property \ref{deg2} with an equality. 
\end{rem}

The following proposition (which is straightforward to prove) is a special case of \cite[Theorem 4.1]{sub1}.

\begin{prop} \label{basic-semi-prop}
Let $\delta$ be a projective degree-like function on $\cc[X]$, and $\xdelta$ be the corresponding projective compactification from Proposition \ref{basic-prop}. Assume $\delta$ is a semidegree. Then $\xdelta$ is a normal variety and $\xdelta_\infty := \xdelta \setminus X$ is an irreducible codimension one subvariety. Moreover, there is a positive integer $d_\delta$ such that $\delta$ agrees with $d_\delta$ times the order of {\em pole} along $\xdelta_\infty$.
\end{prop}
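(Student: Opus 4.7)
The plan has four stages, all turning on the fact that, for a semidegree, the submultiplicative inequality of Definition~\ref{degree-like-defn} is actually an equality.

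First I would use $\delta(fg) = \delta(f) + \delta(g)$ to show that $\gr \cc[X]^\delta$ is an integral domain: if $\bar f, \bar g$ are nonzero classes of $f, g \in \cc[x,y]$ in $\scrF^\delta_{d_1}/\scrF^\delta_{d_1-1}$ and $\scrF^\delta_{d_2}/\scrF^\delta_{d_2-1}$ with $\delta(f) = d_1$ and $\delta(g) = d_2$, then $\delta(fg) = d_1+d_2$ exactly, so $\overline{fg}$ is nonzero in $\scrF^\delta_{d_1+d_2}/\scrF^\delta_{d_1+d_2-1}$. By Proposition~\ref{basic-prop}, $\xdelta_\infty \cong \proj \gr \cc[X]^\delta$ is therefore irreducible; being a proper closed subscheme of the two-dimensional $\xdelta$ cut out by one $\qq$-Cartier equation (namely $t := (1)_1$), it has codimension one.

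Next I would establish normality of $\xdelta$ by proving $\cc[X]^\delta$ is integrally closed. Via the embedding $(f)_d \mapsto ft^d$ of Remark~\ref{(f)_d-remark}, I regard $\cc[X]^\delta$ as a graded subring of the normal ring $\cc[x,y][t]$, both having fraction field $\cc(x,y)(t)$. The semidegree property lets me extend $\nu := -\delta$ (a $\zz$-valued discrete valuation on $\cc(x,y)$ trivial on $\cc$) to the Gauss valuation $\tilde\nu$ on $\cc(x,y)(t)$ defined by $\tilde\nu(\sum_i a_i t^i) := \min_i(\nu(a_i) + i)$. A generator $(f)_d = ft^d$ of $\cc[X]^\delta$ satisfies $\tilde\nu(ft^d) = \nu(f) + d \geq 0$ precisely because $\delta(f) \leq d$, so $\cc[X]^\delta \subseteq V_{\tilde\nu}$ (the valuation ring of $\tilde\nu$), and a term-by-term check on coefficients reverses this to give $\cc[x,y][t] \cap V_{\tilde\nu} = \cc[X]^\delta$. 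Since $V_{\tilde\nu}$ is integrally closed and $\cc[x,y][t]$ is normal, any element of $\cc(x,y)(t)$ integral over $\cc[X]^\delta$ must lie in this intersection.

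For the identification $\delta = d_\delta \omega$ with the order of pole $\omega := -\ord_{\xdelta_\infty}$, I would compute the local ring $\scrO_{\xdelta,\xdelta_\infty}$ as the homogeneous localization of $\cc[X]^\delta$ at the prime $(t)$ (the degree-zero part of the localization at $(t)$). A degree-zero ratio $(f)_d/(g)_d$ sits there iff $(g)_d \notin (t)$, which amounts to $\delta(g) = d$ exactly, while $(f)_d$ exists iff $\delta(f) \leq d$. Such a ratio equals $f/g \in \cc(x,y)$, and the full collection of them is
\[
\{f/g \in \cc(x,y)^* : f, g \in \cc[x,y],\ \delta(f) \leq \delta(g)\} \cup \{0\},
\]
which is precisely the valuation ring $R_\nu$ of $\nu$ inside $\cc(x,y)$. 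Thus $\scrO_{\xdelta,\xdelta_\infty} = R_\nu$, so the divisorial valuations $\ord_{\xdelta_\infty}$ and $\nu$ are equivalent. Since $\ord_{\xdelta_\infty}$ is primitive (its value group on $\cc(x,y)^*$ is $\zz$, because $\xdelta$ is normal and $\xdelta_\infty$ is a prime Weil divisor) while $\nu$ has value group $d_\delta\zz \subseteq \zz$, this equivalence forces $\nu = d_\delta \ord_{\xdelta_\infty}$ for a positive integer $d_\delta$, i.e., $\delta = d_\delta \omega$.

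The hardest step is the last: one must correctly unpack the homogeneous localization so that \emph{being outside the prime $(t)$} translates precisely to \emph{achieving the filtration degree}, which is what identifies $\scrO_{\xdelta,\xdelta_\infty}$ with $R_\nu$; the integrality of $d_\delta$ then falls out only because the normality of $\xdelta$ (established in the previous step) makes $\ord_{\xdelta_\infty}$ primitive.
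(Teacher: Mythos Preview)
Your argument is correct. The paper itself does not prove Proposition~\ref{basic-semi-prop}: it simply declares the statement ``straightforward'' and defers to \cite[Theorem~4.1]{sub1}. What you have written is a clean self-contained proof that makes the cited result unnecessary here. The three ingredients you isolate are exactly the right ones: multiplicativity of $\delta$ forces $\gr\cc[X]^\delta$ to be a domain (hence $\xdelta_\infty$ irreducible); the description $\cc[X]^\delta = \cc[x,y][t]\cap V_{\tilde\nu}$ as an intersection of normal rings with common fraction field gives normality; and the homogeneous localization at the prime $(t)$ recovers the valuation ring $R_\nu$, pinning down $\ord_{\xdelta_\infty}$ up to the positive integer $d_\delta$. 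A minor remark: with the paper's Definition~\ref{semi-defn} (where $\nu$ is already the normalized order of vanishing along a divisor), your argument in fact yields $d_\delta=1$; the statement with a general $d_\delta$ is simply covering the slightly broader situation where $\delta$ is allowed to be a positive integer multiple of such a thing.
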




\section{Some preparatory results} \label{prep-section}
In this section we develop some preliminary results to be used in Section \ref{proof-section} for the proofs of our main results. 

\begin{convention}
Let $y_1, \ldots, y_k$ be indeterminates. From now on we write $A_k$, $\tilde A_k$, $R$, $\tilde R$ to denote respectively  $\cc[x,x^{-1}, y_1, \ldots, y_k]$, $\dpsxc[y_1, \ldots, y_k]$, $\cc[x,x^{-1},y]$, $\dpsxc[y]$. Below we frequently deal with maps $A_k \to R$. We always (unless there is a misprint!) use upper-case letters $F,G, \ldots$ for elements in $A_k$ and corresponding lower-case letters $f,g, \ldots$ for their images in $R$. 
\end{convention}
 
\subsection{The `star action' on descending Puiseux series} \label{sec-star}
\begin{defn} \label{star-defn}
	Let $\phi = \sum_j a_j x^{q_j/p} \in \dpsxc$ be a descending Puiseux series with polydromy order $p$ and $r$ be a multiple of $p$. Then for all $c \in \cc$ we define 
	$$c \star_r \phi := \sum_j a_j c^{q_jr/p}x^{q_j/p}.$$  
	For $\Phi = \sum_{\alpha \in \zz_{\geq 0}^k} \phi_\alpha(x)y_1^{\alpha_1} \cdots y_k^{\alpha_k} \in \tilde A_k$, the {\em polydromy order} of $\Phi$ is the lowest common multiple of the polydromy orders of all the non-zero $\phi_\alpha$'s. Let $r$ be a multiple of the polydromy order $\Phi$. Then we define
	$$c \star_r \Phi := \sum_\alpha \left(c \star_r \phi_\alpha\right) y_1^{\alpha_1} \cdots y_k^{\alpha_k}.$$   
\end{defn}

\begin{rem}
	It is straightforward to see that in the case that $c$ is an $r$-th root of unity (and $r$ is a multiple of the polydromy order of $\phi$), $c \star_r \phi$ is a {\em conjugate} of $\phi$ (cf.\ Remark-Notation \ref{f-phi}).
\end{rem}

The following properties of the $\star_r$ operator are straightforward to see:

\begin{lemma} \label{star-lemma}
	\mbox{}
	\begin{enumerate}
		\item \label{different-stars} Let $p$ be the polydromy order of $\Phi \in \tilde A_k$, $d$ and $e$ be positive integers, and $c \in \cc$. Then $c \star_{pde} \Phi = c^e \star_{pd} \Phi = c^{de} \star_p \Phi$.
		\item \label{star-sum-product} Let $\Phi_j = \sum_{j} \phi_{j,\alpha}(x)y_1^{\alpha_1} \cdots y_k^{\alpha_k} \in \tilde A_k$ for $j = 1,2$, and $r$ be a multiple of the polydromy order of each non-zero $\phi_{j,\alpha}$. Then $c \star_r \left(\Phi_1 + \Phi_2\right) = \left(c \star_r \Phi_1\right) + \left(c \star_r \Phi_2\right)$ and $c \star_r \left(\Phi_1\Phi_2\right) = \left(c \star_r \Phi_1\right)\left(c \star_r \Phi_2\right)$. 
		\item \label{star-projection} Let $\pi : \tilde A_k \to \tilde R$ be a $\cc$-algebra homomorphism that sends $x \mapsto x$ and $y_j \mapsto f_j \in R$ for $1 \leq j \leq k$. Let $\Phi = \sum_{\alpha} \phi_\alpha(x)y_1^{\alpha_1} \cdots y_k^{\alpha_k} \in \tilde A_k$, $r$ be a multiple of the polydromy order of each non-zero $\phi_\alpha$, and $\mu$ be a (not necessarily primitive) $r$-th root of unity. Then $\pi (\mu \star_r \Phi) = \mu \star_r \pi(\Phi)$. \qed
	\end{enumerate}
\end{lemma}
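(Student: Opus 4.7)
The plan is to verify all three assertions by direct computation from the defining formula of $\star_r$, reducing in each case to the scalar situation of a single descending Puiseux series and then extending coefficient-wise to polynomials in $y_1,\ldots,y_k$. The assertions are stated in increasing order of sophistication, and each one is used (directly or in spirit) to prove the next, so I would handle them in the order \eqref{different-stars}, \eqref{star-sum-product}, \eqref{star-projection}.

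For \eqref{different-stars}, I would first reduce to the case of a single monomial $\phi = a x^{q/p} \in \dpsxc$ by linearity in the coefficients $\phi_\alpha$ of $\Phi$ (since $\star_r$ acts only on these Puiseux coefficients, the indeterminates $y_1,\ldots,y_k$ just go along for the ride). On a single monomial the computation is a one-line exercise in exponent arithmetic:
\begin{align*}
c \star_{pde}(ax^{q/p}) &= a c^{q(pde)/p} x^{q/p} = a c^{qde} x^{q/p}, \\
c^e \star_{pd}(ax^{q/p}) &= a (c^e)^{q(pd)/p} x^{q/p} = a c^{qde} x^{q/p}, \\
c^{de} \star_p(ax^{q/p}) &= a (c^{de})^{q} x^{q/p} = a c^{qde} x^{q/p},
\end{align*}
and the three expressions agree.

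For \eqref{star-sum-product}, additivity is immediate from the definition, since $\star_r$ acts on each Puiseux coefficient separately and addition of polynomials in $y_1, \ldots, y_k$ is done coefficient by coefficient. For multiplicativity, I would first prove it for two descending Puiseux series $\phi_1 = \sum_i a_i x^{q_i/p}$, $\phi_2 = \sum_j b_j x^{t_j/p}$ of common denominator dividing $r$: both $c \star_r (\phi_1 \phi_2)$ and $(c\star_r \phi_1)(c\star_r \phi_2)$ expand to $\sum_{i,j} a_i b_j c^{(q_i+t_j)r/p} x^{(q_i+t_j)/p}$. The general case then follows by expanding $\Phi_1 \Phi_2 = \sum_{\alpha,\beta} \phi_{1,\alpha}\phi_{2,\beta}\, y^{\alpha+\beta}$ and applying the scalar identity to each product $\phi_{1,\alpha}\phi_{2,\beta}$.

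For \eqref{star-projection}, the key remark is that every element of $R = \cc[x,x^{-1},y]$ has polydromy order $1$, so for any $r$-th root of unity $\mu$ and any Laurent polynomial $g = \sum_n a_n x^n$, one has $\mu \star_r g = \sum_n a_n \mu^{nr} x^n = g$; in particular $\mu \star_r f_j = f_j$ for each $j$. Expanding $\pi(\Phi) = \sum_\alpha \phi_\alpha f_1^{\alpha_1}\cdots f_k^{\alpha_k}$ and applying \eqref{star-sum-product} (which is legitimate because the polydromy order of $\pi(\Phi)$ divides that of $\Phi$, so $r$ is a permissible exponent for the $\star$-action on $\pi(\Phi)$) yields
\begin{align*}
\mu \star_r \pi(\Phi)
    &= \sum_\alpha (\mu \star_r \phi_\alpha)\,(\mu \star_r f_1)^{\alpha_1}\cdots (\mu \star_r f_k)^{\alpha_k}
    = \sum_\alpha (\mu \star_r \phi_\alpha)\,f_1^{\alpha_1}\cdots f_k^{\alpha_k} \\
    &= \pi\!\left(\sum_\alpha (\mu \star_r \phi_\alpha)\,y_1^{\alpha_1}\cdots y_k^{\alpha_k}\right) = \pi(\mu \star_r \Phi),
\end{align*}
as desired. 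There is no real obstacle here; the only conceptual point worth flagging is precisely this interaction between the hypothesis $\mu^r = 1$ and the triviality of the $\star_r$-action on $\cc[x,x^{-1}]$, which is the mechanism making \eqref{star-projection} work and is the reason \eqref{different-stars} and \eqref{star-sum-product} are stated before it.
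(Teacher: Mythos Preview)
Your proof is correct and is exactly the direct verification the paper has in mind: the paper gives no proof at all, simply declaring the properties ``straightforward to see'' and closing the lemma with a \qed. Your identification of the key mechanism in part~\eqref{star-projection}---that $\mu^r=1$ forces $\mu\star_r$ to act trivially on anything with integer $x$-exponents, in particular on the $f_j\in R$---is precisely the point, and your use of \eqref{star-sum-product} to propagate this through the expansion of $\pi(\Phi)$ is clean.
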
 

\begin{remtation} \label{f-phi}
If $\phi$ is a descending Puiseux series in $x$ with polydromy order $p$, then we write
\begin{align}
f_\phi := \prod_{\parbox{1.5cm}{\scriptsize{$\phi_{j}$ is a con\-ju\-ga\-te of $\phi$}}}\mkern-18mu \left(y - \phi_{j}(x)\right) = \prod_{j=0}^{p-1} \left(y - \zeta^j \star_p \phi(x)\right) \in \tilde R, \label{f-phi-eqn}
\end{align}
where $\zeta$ is a primitive $p$-th root of unity. If $f \in \cc[x,y]$, then its descending Puiseux factorization (Theorem \ref{dpuiseux-factorization}) can be described as follows: there are unique (up to conjugacy) descending Puiseux series $\phi_1, \ldots, \phi_k$, a unique non-negative integer $m$, and $c \in \cc^*$ such that
$$f = cx^m \prod_{i=1}^k f_{\phi_i}$$
Let $(q_1, p_1), \ldots, (q_l, p_l)$ be Puiseux pairs of $\phi$. Set $p_0 :=1$. For each $k$, $0 \leq k \leq l$, we write 
\begin{align}
f_\phi^{(k)} := \prod_{j=0}^{p_0p_1 \cdots p_k-1} \left(y - \zeta^j \star_{p} \phi(x)\right), \label{f-phi-k}
\end{align}
where $\zeta$ is a primitive $(p_1 \cdots p_l)$-th root of unity. Note that $f_\phi^{(l)}  = f_\phi$, and for each $m,n$, $0 \leq m < n \leq l$, 
\begin{align}
f_\phi^{(n)} 
&= \prod_{j=0}^{p_0p_1 \cdots p_{n}-1} \left(y - \zeta^j \star_{p} \phi(x)\right) 
= \prod_{i=0}^{p_{m+1}\cdots p_n-1} \prod_{j=0}^{p_0p_1 \cdots p_m-1} \left(y - \zeta^{ip_0p_1 \cdots p_m +j} \star_{p} \phi(x)\right) \notag \\
&= \prod_{i=0}^{p_{m+1}\cdots p_n-1} \zeta^{ip_0p_1 \cdots p_m} \star_{p} \left(\prod_{j=0}^{p_0p_1 \cdots p_m-1} \left(y - \zeta^{j} \star_{p} \phi(x)\right)\right) \notag\\
&= \prod_{i=0}^{p_{m+1}\cdots p_n-1} \zeta^{ip_0p_1 \cdots p_m} \star_{p} \left(f_\phi^{(m)}\right). \label{f-phi-n-m}
\end{align}
\end{remtation}

\subsection{`Canonical' pre-images of polynomials and their comparison} \label{lifting-section}

\begin{lemma}[`Canonical' pre-images of elements in {$\dpsxc[y]$}] \label{F-Phi}
Let $p_0 := 1, p_1, \ldots, p_{k-1}$ be positive integers, and $\pi:A_k \to R$ be a ring homomorphism which sends $x \mapsto x$ and $y_j \mapsto f_j$, where $f_j$ is monic in $y$ of degree $p_0 \cdots p_{j-1}$, $1 \leq j \leq k$. Then $\pi$ induces a homomorphism $\tilde A_k \to \tilde R$ which we also denote by $\pi$. If $f$ is a non-zero element in $\tilde R$, then there is a unique $F^\pi_f \in \tilde A_k$ such that 
\begin{enumerate}
\item $\pi\left(F^\pi_f \right) = f$ and
\item $\deg_{y_j}(F^\pi_f) < p_{j}$ for all $j$, $1 \leq j \leq k-1$.
\end{enumerate} 
Moreover, if $f$ is monic in $y$ of degree $p_1 \cdots p_{k-1}d$ for some integer $d$, then 
\begin{enumerate}
\addtocounter{enumi}{2}
\item \label{F-Phi-monic} $F^{\pi}_f$ is monic in $y_k$ of degree $d$. 
\item \label{F-Phi-exponents} If the coefficient of $x^\alpha y_1^{\beta_1}\cdots y_k^{\beta_k}$ in $F^\pi_f - y_k^{d}$ is non-zero, then $\sum_{i=1}^j p_0 \cdots p_{i-1}\beta_i < p_1 \cdots p_j$ for all $j$, $1 \leq j \leq k-1$, and $\sum_{i=1}^{k} p_0 \cdots p_{i-1}\beta_i < p_1 \cdots p_{k-1}d$.
\end{enumerate}
Finally, 
\begin{enumerate}
\setcounter{enumi}{4}
\item \label{F-Phi-polynomial} If each of $f, f_1, \ldots, f_k$ is in $\cc[x,y]$ (resp.\ $R$), then $F^\pi_f$ is in $\cc[x,y_1, \ldots, y_k]$ (resp.\ $A_k$). 
\end{enumerate}
\end{lemma}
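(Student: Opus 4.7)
The plan is induction on $k$, using Euclidean division by the $y$-monic polynomial $f_k$ as the main inductive mechanism. The base case $k = 1$ is immediate: since $f_1$ is monic in $y$ of degree $p_0 = 1$, we have $f_1 = y + a(x)$ for some $a \in R$, the constraint $\deg_{y_j}(F^\pi_f) < p_j$ for $1 \le j \le 0$ is vacuous, and $F^\pi_f(x, y_1) := f(x, y_1 - a(x))$ satisfies all the claimed properties.

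For the inductive step, given $f \in \tilde R$, I would first perform repeated Euclidean division by $f_k$ (monic in $y$ of degree $p_0 \cdots p_{k-1}$) to obtain an $f_k$-adic expansion $f = \sum_i r_i f_k^i$ with $r_i \in \tilde R$ and $\deg_y(r_i) < p_0 \cdots p_{k-1}$; then apply the inductive hypothesis to each $r_i$ (via the natural restriction $\pi_{k-1}: A_{k-1} \to R$) to produce $R_i \in \tilde A_{k-1}$ satisfying $\deg_{y_j}(R_i) < p_j$ for $1 \le j \le k-1$ and $\pi(R_i) = r_i$, and set $F^\pi_f := \sum_i R_i y_k^i$. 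The central observation that makes the whole argument work is a mixed-radix bijection: the monomials $y_1^{\alpha_1} \cdots y_{k-1}^{\alpha_{k-1}}$ subject to $0 \le \alpha_j < p_j$ for $1 \le j \le k-1$ have $\pi$-images with pairwise distinct $y$-degrees $\sum_j \alpha_j p_0 \cdots p_{j-1}$, bijectively filling out $\{0, 1, \ldots, p_0 \cdots p_{k-1} - 1\}$ (using that each $f_j$ is monic in $y$ of degree $p_0 \cdots p_{j-1}$).

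Uniqueness then follows cleanly: any $\Delta = \sum_i \Delta_i y_k^i \in \tilde A_k$ satisfying the degree constraints has each $\pi(\Delta_i)$ of $y$-degree strictly less than $\deg_y(f_k)$, so $\pi(\Delta) = 0$ combined with uniqueness of Euclidean division by $f_k$ forces $\pi(\Delta_i) = 0$, and the inductive hypothesis forces $\Delta_i = 0$. For property \eqref{F-Phi-monic}, monicity of $f$ in $y$ of degree $p_1 \cdots p_{k-1} d$ together with the fact that the $y$-degrees of the distinct summands $\pi(R_i) f_k^i$ lie in disjoint intervals $[i p_0 \cdots p_{k-1}, (i+1) p_0 \cdots p_{k-1})$ forces the top $y_k$-degree to be $d$ and, using the bijection above, the coefficient $R_d$ to equal $1$. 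Property \eqref{F-Phi-exponents} is then an immediate consequence of $\beta_j < p_j$ (for $j < k$) and $\beta_k < d$ (which holds for all monomials in $F^\pi_f - y_k^d$). Property \eqref{F-Phi-polynomial} follows because Euclidean division by a $y$-monic polynomial preserves the subring $\cc[x,y] \subseteq R$ (resp.\ $R$ itself), and the recursion preserves the corresponding subrings $\cc[x, y_1, \ldots, y_k] \subseteq A_k$ (resp.\ $A_k$).

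The main obstacle is essentially bookkeeping: tracking $y$-degrees carefully through the induction, and verifying that the disjoint-interval structure genuinely prevents cancellation between different $y_k$-powers when computing $\pi(F^\pi_f)$, so that the top $y_k$-coefficient is forced to be constant and equal to $1$. No new ideas beyond the division-algorithm machinery are needed; Puiseux coefficients do not cause trouble because Euclidean division by a $y$-monic polynomial works identically over any coefficient ring.
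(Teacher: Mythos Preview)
Your argument is correct. The paper does not give an independent proof at all: it simply cites \cite[Theorem 2.13]{abhyankar-expansion} (Abhyankar's $G$-adic expansion theorem) and stops. What you have written is essentially the standard proof of that theorem, so the approaches coincide at the level of mathematical content---repeated Euclidean division by the monic $f_k$ to get the $f_k$-adic expansion, then induction, with the mixed-radix bijection controlling the $y$-degrees.

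One small point worth making explicit in your write-up: when you apply the inductive hypothesis (for $k-1$) to each remainder $r_i$, the hypothesis directly gives only $\deg_{y_j}(R_i) < p_j$ for $1 \le j \le k-2$; the bound $\deg_{y_{k-1}}(R_i) < p_{k-1}$ requires the extra observation that $\deg_y(r_i) < p_0 \cdots p_{k-1}$ together with your disjoint-interval argument (the $y$-degree of $\pi(S_\ell)f_{k-1}^\ell$ lies in $[\ell\, p_0\cdots p_{k-2},\,(\ell+1)p_0\cdots p_{k-2})$ for each $y_{k-1}$-coefficient $S_\ell$ of $R_i$). You clearly have this ingredient in hand, but as written it is folded into the sentence about the mixed-radix bijection rather than stated where it is used.
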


\begin{proof}
	This follows from an immediate application of \cite[Theorem 2.13]{abhyankar-expansion}.
\end{proof}

Now assume $\delta$ is the semidegree on $\cc[x,y]$ such that $\delta(x) > 0$. Assume the generic descending Puisuex series for $\delta$ is 
\begin{align*}
\tilde \phi_\delta(x,\xi) 
	&:= \phi_\delta(x) + \xi x^{r_\delta}
	= \cdots + a_1 x^{\frac{q_1}{p_1}} + \cdots
		 +  a_2 x^{\frac{q_2}{p_1p_2}} + \cdots + a_{l} x^{\frac{q_{l}}{p_1p_2 \cdots p_{l}}} 
		 +  \xi x^{\frac{q_{l + 1}}{p_1p_2 \cdots p_{l+1}}} \label{tilde-phi-delta}
\end{align*}
where $(q_1, p_1), \ldots, (q_{l + 1}, p_{l+1})$ are the formal Puiseux pairs of $\tilde \phi_\delta$. Let $g_0 = x,g_1 = y, \ldots, g_{n+1} \in R$ be the sequence key forms of $\delta$ and $g_{j_0}, \ldots, g_{j_{l+1}}$ be the subsequence of {\em essential} key forms. Define
\begin{align}
f_k &:= g_{j_k} \\
\omega_k &:= \delta(f_k),
\end{align}
$0 \leq k \leq l+1$. 

\begin{lemma} \label{f-k-lemma}
\begin{align*}
f_{k+1} 
	&= \begin{cases}
			y - \text{a polynomial in}\ x & \text{if}\ k = 0,\\
			f_k^{p_k} - \sum_{i=0}^{m_k}c_{k,i} f_0^{\beta^i_{k,0}} \cdots f_{k}^{\beta^i_{k,k}} & \text{if}\ 1 \leq k \leq l,
		\end{cases} 
\end{align*}
where
\begin{compactenum}
\addtocounter{enumii}{1}
\item $m_k \geq 0$, 
\item $c_{k,i} \in \cc^*$ for all $i$, $0 \leq i \leq m_k$,
\item \label{exponent-bound} $\beta^i_{k,j}$'s are integers such that $0 \leq \beta^i_{k,j} < p_j$ for $1 \leq j \leq k$ and $0 \leq i \leq m_k$,
\item \label{zero} $\beta^0_{k,k} = 0$,
\item \label{decreasing-omega} $p_k \omega_k = \sum_{j = 0}^{k-1}\beta^0_{k,j}\omega_j > \sum_{j = 0}^{k}\beta^1_{k,j}\omega_j > \cdots > \sum_{j = 0}^{k}\beta^{m_k}_{k,j}\omega_j > \omega_{k+1}$.
\end{compactenum}
\end{lemma}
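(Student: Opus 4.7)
The plan is to combine Algorithm \ref{key-algorithm} with properties \eqref{semigroup-property} and \eqref{next-property} of Definition \ref{key-defn}, and telescope the recursive expressions through the intermediate (non-essential) key forms. The key observation is that between $f_k = g_{j_k}$ and $f_{k+1} = g_{j_{k+1}}$, assertion \eqref{alpha=p} of Proposition \ref{essential-value-prop} gives $\alpha_{j_k} = p_k$ and $\alpha_j = 1$ for every intermediate index $j$ with $j_k < j < j_{k+1}$. The case $k = 0$ is then immediate: all iterations of Algorithm \ref{key-algorithm} from $g_1 = y$ up through $g_{j_1} = f_1$ fall into Case~2.2, because $p_0 = 1$ and no $\xi$-dependence appears in the leading term until step $j_1$ is reached; each such iteration produces $g_{s+1} = g_s - c_s x^{\beta^s_0}$ with $c_s \in \cc^*$, and telescoping yields the claimed form for $f_1$.

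For $1 \leq k \leq l$, property \eqref{next-property} applied at index $j_k$ gives
\[
g_{j_k + 1} = f_k^{p_k} - \theta_{j_k}\, g_0^{\beta_{j_k, 0}} g_1^{\beta_{j_k, 1}} \cdots g_{j_k - 1}^{\beta_{j_k, j_k - 1}},
\]
with $0 \leq \beta_{j_k, i} < \alpha_i$ for $1 \leq i < j_k$ by \eqref{semigroup-property}. Since $\alpha_i = 1$ whenever $i \notin \{j_0, \ldots, j_{k-1}\}$, the corresponding $\beta_{j_k, i}$ must vanish, so the subtracted product reduces to a monomial in $f_0 = x, f_1, \ldots, f_{k-1}$; this already gives \eqref{zero}. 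For each $j$ with $j_k < j < j_{k+1}$, applying \eqref{next-property} to $g_{j+1} = g_j - \theta_j g_0^{\beta_{j,0}} \cdots g_{j-1}^{\beta_{j, j-1}}$ and repeating the same $\alpha_i = 1$ argument expresses the subtracted product as a monomial in $f_0, f_1, \ldots, f_k$ with exponent on $f_s$ strictly less than $p_s = \alpha_{j_s}$ for $1 \leq s \leq k$. Telescoping the recursions for $j \in [j_k + 1, j_{k+1}]$ then produces the desired formula for $f_{k+1}$ with $m_k := j_{k+1} - j_k - 1$ and the exponent bounds \eqref{exponent-bound}.

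The chain \eqref{decreasing-omega} is read off from property \eqref{semigroup-property} applied step-by-step: the $\delta$-value of the $i$-th subtracted term equals $\delta(g_{j_k + i})$; the initial equality $p_k\omega_k = \sum \beta^0_{k,j}\omega_j$ is the $j = j_k$ instance of $\alpha_j\eta_j = \sum\beta_{j,i}\eta_i$; each strict inequality between consecutive sums is $\eta_{j+1} < \eta_j$ (using $\alpha_j = 1$ for $j_k < j < j_{k+1}$); and the final inequality $\sum \beta^{m_k}_{k,j}\omega_j > \omega_{k+1}$ is $\eta_{j_{k+1}} < \eta_{j_{k+1}-1}$. The main (and quite mild) obstacle is the bookkeeping needed to verify that every subtracted product really is a monomial purely in $f_0, \ldots, f_k$ with the claimed exponent bounds; this reduces cleanly to invoking $\alpha_i = 1$ at non-essential indices together with the sharp bound $0 \leq \beta_{j,i} < \alpha_i$ from \eqref{semigroup-property}.
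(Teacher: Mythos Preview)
Your proof is correct and follows exactly the approach the paper sketches in its one-line proof: you combine property \eqref{next-property} of Definition \ref{key-defn}, assertion \eqref{alpha=p} of Proposition \ref{essential-value-prop}, and the definition of essential key forms, then telescope through the non-essential indices. The only minor remark is that for the $k=0$ case you appeal to Case~2.2 of Algorithm \ref{key-algorithm}, whereas the same telescoping argument via \eqref{next-property} (with $\alpha_j = 1$ forcing $\beta_{j,i} = 0$ for $1 \leq i < j$) already yields $f_1 = y - \sum_s \theta_s x^{\beta_{s,0}}$ directly, keeping the argument uniform across all $k$.
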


\begin{proof}
The lemma follows from combining property \eqref{next-property} of key forms, assertion \eqref{alpha=p} of Proposition \ref{essential-value-prop}, and the defining property of essential key forms (Definition \ref{essential-defn}). 
\end{proof}
Let $\pi: A_{l+1} \to R$ be the $\cc$-algebra homomorphism which maps $x \mapsto x$ and $y_k \to f_k$, $1 \leq k \leq l+1$, and let $\pi_k := \pi|_{A_k}: A_k \to R$, $1 \leq k \leq l+1$. Let $\omega$ be weighted degree on $A_{l+1}$ corresponding to weights $\omega_0$ for $x$ and $\omega_k$ for $y_k$, $1 \leq k \leq l+1$. We will often abuse the notation and write $\pi$ and $\omega$ respectively for $\pi_k$ and $\omega|_{A_k}$ for each $k$, $1 \leq k \leq l+1$. Define 
\begin{align} \label{F-k}
F_{k+1} &:= \begin{cases}
			y_1 & \text{if}\ k=0,\\
			y_{k}^{p_k} - \sum_{i=0}^{m_k}c_{k,i} x^{\beta^i_{k,0}} y_1^{\beta^i_{k,1}} \cdots y_{k}^{\beta^i_{k,k}}, 
				& \text{if}\ 1 \leq k \leq l,
			\end{cases}
\end{align} 
where $c_{k,i}$'s are $\beta^i_{k,j}$'s are as in Lemma \eqref{f-k-lemma}. Note that $F_1 \in A_1$ and $F_k \in A_{k-1}$ for $2 \leq k \leq l+1$. Moreover, $\pi(F_k) = f_k$ for each $k$, $1 \leq k \leq l+1$. 

\begin{lemma} \label{delta-omega-lemma}
Fix $k$, $1 \leq k \leq l+1$. 
\begin{enumerate}
\item \label{assertion-H-1-2} Let $H_1, H_2$ be two monomials in $A_k$ such that $\deg_{y_j}(H) < p_j$ for all $j$, $1 \leq j \leq k$. Then $\omega(H_1) \neq \omega(H_2)$.
\item \label{assertion-H} Let $H \in A_k$ be such that $\deg_{y_j}(H) < p_j$ for all $j$, $1 \leq j \leq k$. Then $\delta(\pi(H)) = \omega(H)$.
\end{enumerate}

\end{lemma}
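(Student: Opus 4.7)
My plan is to prove assertion \eqref{assertion-H-1-2} first by induction on $k$, and then derive assertion \eqref{assertion-H} from it by the non-Archimedean property of the semidegree $\delta$.

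For \eqref{assertion-H-1-2}, the crucial ingredients are property \eqref{semigroup-property} of key forms (Definition \ref{key-defn}) and assertion \eqref{alpha=p} of Proposition \ref{essential-value-prop}, which together say that $p_k$ is the smallest positive integer such that $p_k\omega_k$ lies in the subgroup $\zz\langle \omega_0,\ldots,\omega_{k-1}\rangle$ of $\zz$. Write $H_1 = x^{\alpha_0}y_1^{\alpha_1}\cdots y_k^{\alpha_k}$ and $H_2 = x^{\alpha'_0}y_1^{\alpha'_1}\cdots y_k^{\alpha'_k}$ with $0 \leq \alpha_j, \alpha'_j < p_j$ for $1 \leq j \leq k$, and suppose $\omega(H_1) = \omega(H_2)$. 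Then
\[
(\alpha_k - \alpha'_k)\omega_k = \sum_{j=0}^{k-1}(\alpha'_j - \alpha_j)\omega_j \in \zz\langle \omega_0,\ldots,\omega_{k-1}\rangle.
\]
Since $|\alpha_k - \alpha'_k| < p_k$, the minimality of $p_k$ forces $\alpha_k = \alpha'_k$, and then induction on $k$ (with the base case $k=0$ following from $\omega_0 = \delta(x) > 0$) finishes the argument.

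For \eqref{assertion-H}, I write $H = \sum_{I} c_I x^{i_0}y_1^{i_1}\cdots y_k^{i_k}$ with $0 \leq i_j < p_j$ for $1 \leq j \leq k$ and $c_I \in \cc^*$, so that $\pi(H) = \sum_I c_I x^{i_0}f_1^{i_1}\cdots f_k^{i_k}$. Since $\delta$ is a semidegree (hence multiplicative, see Remark \ref{semi-remark}),
\[
\delta\bigl(c_I x^{i_0}f_1^{i_1}\cdots f_k^{i_k}\bigr) = i_0\omega_0 + \sum_{j=1}^{k} i_j\omega_j = \omega\bigl(x^{i_0}y_1^{i_1}\cdots y_k^{i_k}\bigr).
\]
By part \eqref{assertion-H-1-2}, the values $\omega(x^{i_0}y_1^{i_1}\cdots y_k^{i_k})$ are pairwise distinct as $I$ varies, so exactly one monomial summand of $\pi(H)$ attains the maximum $\delta$-value. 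The standard ultrametric strengthening of property (i) of Definition \ref{degree-like-defn} then yields $\delta(\pi(H)) = \max_I \omega(x^{i_0}y_1^{i_1}\cdots y_k^{i_k}) = \omega(H)$.

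There is no serious obstacle; the only subtle point is to invoke the correct characterization of $p_k$ from Proposition \ref{essential-value-prop}\eqref{alpha=p} combined with Definition \ref{key-defn}\eqref{semigroup-property} to drive the induction in \eqref{assertion-H-1-2}, and to observe that the distinctness supplied by \eqref{assertion-H-1-2} upgrades the inequality in property (i) of Definition \ref{degree-like-defn} to an equality.
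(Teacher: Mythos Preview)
Your proof is correct and follows essentially the same approach as the paper's. The paper invokes Proposition \ref{essential-value-prop}\eqref{alpha=p} to get the minimality of $p_j$ and says assertion \eqref{assertion-H-1-2} is then ``immediate,'' whereas you spell out the induction on $k$ explicitly; for assertion \eqref{assertion-H} both arguments decompose $H$ into monomials, compute $\delta$ on each via multiplicativity, and use the distinctness from \eqref{assertion-H-1-2} together with the ultrametric inequality to conclude.
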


\begin{proof}
Assertion \eqref{alpha=p} of Proposition \ref{essential-value-prop} implies that for each $j$, $1 \leq j \leq k$, $p_j$ is the smallest positive integer such that $p_j\omega_j$ is in the group generated by $\omega_0, \ldots, \omega_{j-1}$. This immediately implies assertion \eqref{assertion-H-1-2}. For assertion \eqref{assertion-H}, write $H = \sum_{i \geq 1} H_i$, where $H_i$'s are monomials in $A_k$. By assertion \eqref{assertion-H-1-2} we may assume w.l.o.g.\ that $\omega(H) = \omega(H_1) > \omega(H_2) > \cdots$. Since $\delta(\pi(y_j)) = \delta(f_j) = \omega_j = \omega(y_j)$ for each $j$, $1 \leq j \leq k$, it follows that $\delta(\pi(H_i)) = \omega(H_i)$ for all $i$. It then follows from the definition of degree-like functions (Definition \ref{degree-like-defn}) that $\delta(\pi(H)) = \omega(H_1) = \omega(H)$.
\end{proof}

\begin{lemma} \label{comparison-lemma-1}
For each $k$, $1 \leq k \leq l+1$, define
\begin{align}
r_k &:= \frac{q_k}{p_1p_2 \cdots p_k} \label{r-k}\\
\phi_k &:= [\phi_{\delta}]_{> r_k} \label{phi-k}
\end{align}
Define $f_{\phi_k} \in \tilde R$ as in \eqref{f-phi-eqn}. Also define
\begin{align*}
F_{\phi_k} := \begin{cases}
	F^{\pi_1}_{f_{\phi_1}} \in \tilde A_1 & \text{for}\ k = 1,\\
	F^{\pi_{k-1}}_{f_{\phi_k}} \in \tilde A_{k-1} & \text{for}\ 2 \leq k \leq l +1.
\end{cases} 
\end{align*}
Then 
\begin{enumerate}[(a)] 
\item \label{f-phi-k-value} $\delta(f_{\phi_k}) = \omega_{k}$. 
\item \label{F-1-agreement} $F_1 = F_{\phi_1} = y_1$. 
\item \label{F-k+1-agreement} For $k \geq 1$, $F_{k+1}$ is precisely the sum of all monomial terms $T$ (in $x,y_1, \ldots, y_k$) of $F_{\phi_{k+1}}$ such that $\omega(T) > \omega_{k+1}$.
\end{enumerate}
\end{lemma}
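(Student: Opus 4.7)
The proof divides naturally into the three assertions, with part (a) being the computational heart and part (c) the main application.

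For \textbf{(a)}, I would induct on $k$. By Theorem \ref{semidescending}, $\delta(f_{\phi_k}) = \delta(x)\cdot\deg_x(f_{\phi_k}|_{y=\tilde\phi_\delta})$, and the factorization \eqref{f-phi-eqn} gives $f_{\phi_k}|_{y=\tilde\phi_\delta} = \prod_{j}(\tilde\phi_\delta - \phi_{k,j})$ where $\phi_{k,j}$ runs over the $p_1\cdots p_{k-1}$ conjugates of $\phi_k$. For each such conjugate, the leading degree of $\tilde\phi_\delta - \phi_{k,j}$ equals the largest characteristic exponent $r_s$ at which $\phi_k$ and $\phi_{k,j}$ disagree (for $j=0$, this leading degree is $r_k$, using $\tilde\phi_\delta - \phi_k = a_k x^{r_k} + \mathrm{l.d.t.}$ when $k\le l$, or $=\xi x^{r_{l+1}}$ when $k=l+1$). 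Using the $\star$-action (Definition \ref{star-defn}) and the fact that $\gcd(q_s, p_s)=1$, one counts that the number of conjugates first differing from $\phi_k$ at level $r_s$ is $(p_s-1)p_{s+1}\cdots p_{k-1}$. Summing yields
\[
\deg_x(f_{\phi_k}|_{y=\tilde\phi_\delta}) = r_k + \sum_{s=1}^{k-1}(p_s-1)p_{s+1}\cdots p_{k-1}\,r_s,
\]
and multiplying by $\delta(x)=p_1\cdots p_{l+1}$ and comparing the resulting recursion $\delta(f_{\phi_{k+1}}) = p_k\delta(f_{\phi_k}) + \delta(x)(r_{k+1}-r_k)$ against the recursion \eqref{omega-defn} for $\omega_k$ finishes the induction.

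For \textbf{(b)}, it suffices to observe that $\phi_1 = [\phi_\delta]_{>r_1}$ is a Laurent polynomial in $x$ (all exponents exceeding $r_1$ are integers, by the definition of Puiseux pairs), so $f_{\phi_1} = y - \phi_1(x)$. Since $\pi_1(y_1) = f_1 = y - \phi_1$ by Lemma \ref{f-k-lemma}, we have $\pi_1(y_1) = f_{\phi_1}$, and uniqueness in Lemma \ref{F-Phi} forces $F_{\phi_1} = y_1 = F_1$.

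For \textbf{(c)}, the strategy is to play the two descriptions of a common element off each other. By Lemma \ref{f-k-lemma} parts \eqref{exponent-bound} and \eqref{zero}, $F_{k+1}$ has the form $y_k^{p_k} - (\text{terms with } \deg_{y_j}<p_j \text{ for all } 1\le j\le k)$; by Lemma \ref{F-Phi} parts \eqref{F-Phi-monic}, \eqref{F-Phi-exponents}, the canonical preimage $F_{\phi_{k+1}}$ has exactly the same shape. Therefore $D := F_{\phi_{k+1}} - F_{k+1}$ satisfies the degree bounds $\deg_{y_j}(D)<p_j$ for $1\le j\le k$. Now $\pi(D) = f_{\phi_{k+1}} - f_{k+1}$, and by part (a) together with $\delta(f_{k+1}) = \omega_{k+1}$ (from the definition of essential key forms), the semidegree axiom gives $\delta(\pi(D)) \le \omega_{k+1}$. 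Lemma \ref{delta-omega-lemma}\eqref{assertion-H} converts this to $\omega(D) \le \omega_{k+1}$, and Lemma \ref{delta-omega-lemma}\eqref{assertion-H-1-2} upgrades this to the monomial-level statement: every monomial appearing in $D$ has $\omega$-value at most $\omega_{k+1}$. Combined with Lemma \ref{f-k-lemma}\eqref{decreasing-omega}, which says every monomial of $F_{k+1}$ has $\omega$-value strictly greater than $\omega_{k+1}$, this pins down $F_{k+1}$ as exactly the sum of monomials of $F_{\phi_{k+1}}$ of $\omega$-value greater than $\omega_{k+1}$.

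The hardest step is the combinatorial counting inside (a): tracking exactly which of the $\phi_{k,j}$'s first differ from $\phi_k$ at each characteristic exponent $r_s$, using the $\star$-action on Puiseux series and the coprimality $\gcd(p_s,q_s)=1$. Everything else — parts (b) and (c) — then follows by clean structural arguments combining Lemmas \ref{F-Phi}, \ref{delta-omega-lemma}, and \ref{f-k-lemma} with the semidegree axiom.
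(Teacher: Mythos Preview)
Your proof is correct and follows essentially the same route as the paper's: the counting of conjugates in (a), the immediate identification in (b), and in (c) the reduction to Lemma \ref{delta-omega-lemma}\eqref{assertion-H} via the degree bounds coming from Lemmas \ref{F-Phi} and \ref{f-k-lemma}. One small remark: your invocation of Lemma \ref{delta-omega-lemma}\eqref{assertion-H-1-2} to pass from $\omega(D)\le\omega_{k+1}$ to the monomial-level statement is superfluous, since $\omega$ is a weighted degree (the maximum over monomials), so $\omega(D)\le\omega_{k+1}$ already says every monomial of $D$ has $\omega$-value at most $\omega_{k+1}$; the paper does not cite that assertion at this step either.
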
 

\begin{proof}
We compute $\delta(f_{\phi_k})$ using identity \eqref{phi-delta-defn}. Let $\tilde p_k := p_1 \cdots p_{k-1}$. It is straightforward to see that $\phi_k$ has precisely $\tilde p_k$ conjugates $\phi_{k,1}, \ldots, \phi_{k,\tilde p_k}$, and $\deg_x(\tilde \phi_\delta(x,\xi)-\phi_{k,j}(x))$ is $r_1$ for $(p_1-1)p_2 \cdots p_{k-1}$ of the $\phi_{k,j}$'s, $r_2$ for $(p_2-1)p_3 \cdots p_{k-1}$ of $\phi_{k,j}$'s, and so on. Identity \eqref{phi-delta-defn} then implies that  
\begin{align*}
\delta(f_{\phi_k}) 	
	&= \delta(x)\sum_{j=1}^{\tilde p_k}\deg_x(\tilde \phi_\delta(x,\xi)-\phi_{k,j}(x)) \\
	&= p_1 \cdots p_{l+1}\left((p_1-1)p_2 \cdots p_{k-1}\frac{q_1}{p_1} + (p_2-1)p_3 \cdots p_{k-1}\frac{q_2}{p_1p_2} + \cdots + (p_{k-1} - 1)\frac{q_{k-1}}{p_1 \cdots p_{k-1}} + \frac{q_k}{p_1 \cdots p_k}\right)
\end{align*}
A straightforward induction on $k$ then yields that $\delta(f_{\phi_k}) = p_{k-1}\delta(f_{\phi_{k-1}})  + (q_k - q_{k-1}p_k)p_{k+1} \cdots p_{l+1}$. Identity \eqref{omega-defn} then implies that $\delta(f_{\phi_k}) = \omega_k$, which proves assertion \eqref{f-phi-k-value}. Assertion \eqref{F-1-agreement} follows immediately from the definitions. We now prove assertion \eqref{F-k+1-agreement}. Fix $k$, $1 \leq k \leq l$. Let $\tilde F$ be the sum of all monomial terms $T$ (in $x,y_1, \ldots, y_k$) of $F_{\phi_{k+1}}$ such that $\omega(T) > \omega_{k+1}$, i.e.\ $F_{\phi_{k+1}} = \tilde F + \tilde G$ for some $\tilde G \in \tilde A_k$ with $\omega(\tilde G) \leq \omega_{k+1}$. It follows that
\begin{align*}
\delta(\pi(\tilde F)) 
	&= \delta(\pi(F_{\phi_{k+1}}) - \pi(\tilde G)) 
	 \leq \max\{\delta(\pi(F_{\phi_{k+1}})), \delta(\pi(\tilde G))\} 
	 \leq \max\{\delta(f_{\phi_{k+1}}), \omega(\tilde G)\}
	 \leq \omega_{k+1}
\end{align*}
On the other hand, $\delta(\pi(F_{k+1})) = \delta(f_{k+1}) = \omega_{k+1}$. It follows that \begin{align}
\delta(\pi(\tilde F - F_{k+1})) 
	= \delta(\pi(\tilde F) - \pi(F_{k+1})) 
	\leq \max\{\delta(\pi(\tilde F)),  \delta(\pi(F_{k+1}))\} 
	= \omega_{k+1} \label{delta-omega}
\end{align}
Now, \eqref{F-k} and defining properties of $F_{\phi_k}$ from Lemma \ref{F-Phi} imply that $H := \tilde F - F_{k+1}$ satisfies the hypothesis of assertion \eqref{assertion-H} of Lemma \ref{delta-omega-lemma}, so that $\delta(\pi(\tilde F - F_{k+1})) = \omega(\tilde F - F_{k+1})$. Inequailty \eqref{delta-omega} then implies that 
\begin{align}
\omega(\tilde F - F_{k+1}) \leq \omega_{k+1} \label{omega-omega}
\end{align} 
But the construction of $\tilde F$ and assertion \eqref{decreasing-omega} of Lemma \ref{f-k-lemma} imply that all the monomials that appear in $\tilde F$ or $F_{k+1}$ have $\omega$-value {\em greater than} $\omega_{k+1}$. Therefore \eqref{omega-omega} implies that $\tilde F = F_{k+1}$, as required to complete the proof.
\end{proof}
%
%
%
%

The proof of the next lemma is long, and we put it in Appendix \ref{comparisondix}.

\begin{lemma}  \label{comparison-lemma-2}
Fix $k$, $0 \leq k \leq l$. Pick $\psi \in \dpsxc$ such that $\psi \equiv_{r_{k+1}} \phi_\delta$; in particular, the first $k$ Puiseux pairs of $\psi$ are $(q_1, p_1), \ldots, (q_k, p_k)$. As in \eqref{f-phi-k}, define
\begin{align*}
f_\psi^{(k)} := \prod_{j=0}^{p_0p_1 \cdots p_k-1} \left(y - \zeta^j \star_{q} \psi(x)\right), 
\end{align*}
where $q$ is the polydromy order of $\psi$ and $\zeta$ is a primitive $q$-th root of unity. Define
\begin{align}
F_{\psi}^{(k)} := \begin{cases}
	F^{\pi_1}_{f_{\psi}^{(0)}} \in \tilde A_1 & \text{for}\ k = 0,\\
	F^{\pi_k}_{f_{\psi}^{(k)}} \in \tilde A_k & \text{for}\ 1 \leq k \leq l.
\end{cases}  \label{F-psi-k-defn}
\end{align}
Then 
$$\omega\left(F_{\psi}^{(k)} - F_{k+1}\right) \leq \omega_{k+1}.$$
\end{lemma}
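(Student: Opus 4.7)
The plan is to combine Lemma \ref{comparison-lemma-1} with a direct $x$-degree estimate for $f_\psi^{(k)} - f_{\phi_{k+1}}$ under the substitution $y = \tilde\phi_\delta(x, \xi)$. First, Lemma \ref{comparison-lemma-1}\eqref{F-k+1-agreement} gives $\omega(F_{\phi_{k+1}} - F_{k+1}) \leq \omega_{k+1}$, so by the ultrametric nature of $\omega$ it is enough to show $\omega(F_\psi^{(k)} - F_{\phi_{k+1}}) \leq \omega_{k+1}$. Since the canonical pre-image map $F^\pi$ is $\cc$-linear (the bounds $\deg_{y_j}<p_j$ in Lemma \ref{F-Phi} pass through sums, so uniqueness forces additivity), this in turn equals $\omega(F^\pi_{f_\psi^{(k)} - f_{\phi_{k+1}}})$.

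Second, I would introduce the extension $\tilde\delta$ of $\delta$ to $\tilde R = \dpsxc[y]$ by $\tilde\delta(f) := \omega_0 \deg_x f(x, \tilde\phi_\delta(x,\xi))$ and establish $\omega(F^\pi_f) \leq \tilde\delta(f)$ for every $f \in \tilde R$ whose canonical pre-image lies in $\tilde A_k$ with the proper degree conditions. This is the analogue of Lemma \ref{delta-omega-lemma}\eqref{assertion-H} for $\tilde A_k$: writing $f = \sum_a c_a(x) y^a$ with $c_a \in \dpsxc$, linearity gives $F^\pi_f = \sum_a c_a F^\pi_{y^a}$, each $F^\pi_{y^a}$ lies in $A_k$ and satisfies the degree conditions so that $\omega(F^\pi_{y^a}) = \delta(y^a) = a\omega_0\deg_x\tilde\phi_\delta$, and the ultrametric estimate on the sum matches $\tilde\delta(f)$. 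This reduces the problem to showing $\tilde\delta(f_\psi^{(k)} - f_{\phi_{k+1}}) \leq \omega_{k+1}$.

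For the main estimate, set $N := p_1 \cdots p_k$ and for $0 \leq j \leq N-1$ put $A_j := \tilde\phi_\delta - \zeta^j\star_q\phi_{k+1}$ and $B_j := \tilde\phi_\delta - \zeta^j\star_q\psi$, so that $(f_\psi^{(k)} - f_{\phi_{k+1}})(x, \tilde\phi_\delta) = \prod_j B_j - \prod_j A_j$. The hypothesis $\psi \equiv_{r_{k+1}} \phi_\delta$ combined with Lemma \ref{star-lemma} gives $\deg_x(B_j - A_j) = \deg_x(\zeta^j\star_q(\phi_{k+1}-\psi)) \leq r_{k+1}$. For each $j \neq 0$ I would identify the smallest Puiseux position $m(j) \leq k$ at which $\zeta^j\star_q\phi_{k+1}$ differs from $\phi_{k+1}$ and deduce $\deg_x A_j = r_{m(j)} > r_{k+1}$, while $\deg_x A_0 = r_{k+1}$ is supplied by the $(k+1)$-st Puiseux term of $\phi_\delta$ when $k<l$, or by the generic term $\xi x^{r_\delta}$ when $k=l$. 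Since the leading $x$-powers of the various $A_j$ sit at pairwise distinct exponents, the product suffers no leading-term cancellation, giving the exact identity $\sum_j \deg_x A_j = \deg_x(f_{\phi_{k+1}}(x, \tilde\phi_\delta)) = \omega_{k+1}/\omega_0$ (the second equality by Lemma \ref{comparison-lemma-1}\eqref{f-phi-k-value}). Writing the product difference multilinearly,
\begin{align*}
\prod_j B_j - \prod_j A_j = \sum_{\emptyset \neq S \subseteq \{0,\ldots,N-1\}} \prod_{j \in S}(B_j - A_j) \prod_{j \notin S} A_j,
\end{align*}
each summand has $x$-degree at most $|S|\,r_{k+1} + (\omega_{k+1}/\omega_0 - |S|\,r_{k+1}) = \omega_{k+1}/\omega_0$, whence the ultrametric bound on the sum yields $\tilde\delta(f_\psi^{(k)} - f_{\phi_{k+1}}) \leq \omega_{k+1}$.

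The main obstacle will be making the second step fully rigorous, i.e.\ extending Lemma \ref{delta-omega-lemma}\eqref{assertion-H} to $\tilde A_k$ where the $x$-exponents are rational and many ``monomials'' can share an $\omega$-value; this likely requires either passing to $\tilde\delta$ by way of a careful monomial decomposition or arguing by induction on $k$ using the recursion $f_\psi^{(k)} = \prod_{i=0}^{p_k-1}\zeta^{i p_1\cdots p_{k-1}}\star_q f_\psi^{(k-1)}$ together with Lemma \ref{star-lemma}\eqref{star-projection} to match canonical lifts. A secondary technical point is the leading-term non-cancellation identity $\sum_j \deg_x A_j = \omega_{k+1}/\omega_0$, which demands a Galois-theoretic description of how $\star_q$ permutes the successive Puiseux terms of $\phi_{k+1}$; everything beyond that is a direct computation from the formal properties of $\star$ in Lemma \ref{star-lemma} and the substitution formula of Theorem \ref{semidescending}.
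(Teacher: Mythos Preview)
Your Step 2 contains a genuine gap that cannot be repaired as stated: the inequality $\omega(F^\pi_f) \leq \tilde\delta(f)$ is \emph{false} over $\tilde A_k$. The failure is not merely technical; fractional $x$-exponents in $\dpsxc$ create cancellations under the substitution $y = \tilde\phi_\delta$ that are invisible to $\omega$. For a concrete counterexample, take $k=1$ with $p_1 \geq 2$, assume $\phi_\delta$ has no integer-degree terms, and set $H := y_1 - a_1 x^{q_1/p_1} \in \tilde A_1$, where $a_1 x^{q_1/p_1}$ is the leading term of $\phi_\delta$. Since $\omega_0 \cdot q_1/p_1 = \omega_1$, both monomials of $H$ have $\omega$-value $\omega_1$, so $\omega(H) = \omega_1$; but $\pi_1(H) = y - a_1 x^{q_1/p_1}$ and $\tilde\delta(\pi_1(H)) = \omega_0 \deg_x(\tilde\phi_\delta - a_1 x^{q_1/p_1}) < \omega_1$. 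Your monomial decomposition yields only $\omega(F^\pi_f) \leq \max_a \tilde\delta(c_a y^a)$, and the right-hand side can strictly exceed $\tilde\delta(f)$; so Step 3, which bounds only $\tilde\delta(f_\psi^{(k)} - f_{\phi_{k+1}})$, does not close the loop. To salvage the route you would need, for each $a$, a bound on $\deg_x$ of the coefficient of $y^a$ in $f_\psi^{(k)} - f_{\phi_{k+1}}$ --- a different and considerably more delicate computation than your multilinear estimate.

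The paper avoids this obstacle by never passing through $\tilde\delta$. It proves instead an inductive comparison (Lemma \ref{F-phi-psi-inductive-agreement}): if $[F_{\psi_1}^{(k-1)}]_{>\mu} = [F_{\psi_2}^{(k-1)}]_{>\mu}$ and both agree with $F_k$ above $\omega_k$, then $[F_{\psi_1}^{(k)}]_{>(p_k-1)\omega_k+\mu} = [F_{\psi_2}^{(k)}]_{>(p_k-1)\omega_k+\mu}$. The proof builds $F_\psi^{(k)}$ from $F_\psi^{(k-1)}$ via the product formula \eqref{f-phi-n-m} you mention, and then runs an explicit reduction sequence that replaces each illegal $y_i^{p_i}$ by $y_{i+1} - (F_{i+1} - y_i^{p_i})$; the crucial observation is that every such replacement is $\omega$-non-increasing on monomials, so the truncation $[\cdot]_{>\mu'}$ is preserved throughout and is unaffected by the ``tail'' $G_j$. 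A double induction (Corollary \ref{stronger-phi-psi-agreement}) then bootstraps to the full statement. Your fallback suggestion --- induction on $k$ via the $\star$-product recursion --- is exactly the right direction, but the substantive content is this $\omega$-monotone reduction of the product to canonical form, which your sketch does not supply.
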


\subsection{Implications of polynomial key forms} \label{polynomial-section}
We continue with the notations of Section \ref{lifting-section}. Let $\xi_1, \ldots, \xi_{l+1}$ be new indeterminates, and for each $k$, $1 \leq k \leq l+1$, let $\delta_k$ be the semidegree on $\cc[x,y]$ corresponding to the generic degree-wise Puisuex series
\begin{align*} 
\tilde \phi_k(x,\xi_k) 
			&:= \phi_k(x)  +  \xi_k x^{r_k} 
\end{align*} 
i.e.\ $\delta_k(x) = p_1 \cdots p_k$ and for each $f \in \cc[x,y]\setminus \{0\}$, 
\begin{align}
\delta_k(f(x,y)) = \delta_k(x) \deg_x\left(f(x,\tilde \phi_k(x,\xi_k)\right). \label{delta-k}
\end{align} 
The following lemma follows from a straightforward examination of Algorithm \ref{key-algorithm}. 

\begin{lemma} \label{delta-k-lemma}
For each $k$, $1 \leq k \leq l+1$, the following hold:
\begin{enumerate}
\item \label{delta-k-key-forms} The key forms of $\delta_k$ are $g_0, g_1, \ldots, g_{j_k}$. 
\item The essential key forms of $\delta_k$ are $f_0, \ldots, f_k$. 
\item \label{delta-k-values} $\delta_k(g_j) = \frac{\delta(g_j)}{p_{k+1} \cdots p_{l+1}}$, $0 \leq j \leq j_k$. \qed
%
\end{enumerate} 
\end{lemma}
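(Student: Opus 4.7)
The idea is to run Algorithm \ref{key-algorithm} with $\tilde\phi_k$ in place of $\tilde\phi_\delta$ and verify that the output is exactly $g_0,g_1,\ldots,g_{j_k}$, with essential indices $0=j_0<j_1<\cdots<j_k$, and with values $\delta_k(g_s)=\delta(g_s)/(p_{k+1}\cdots p_{l+1})$. Because $\delta_k$ is, by definition, the unique semidegree with generic descending Puiseux series $\tilde\phi_k$, and by Theorem \ref{key-thm}(i) a semidegree is determined by its key forms, this verification establishes all three assertions simultaneously.

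I would proceed by induction on $s$, $0\le s\le j_k$, showing three things at once: (i) the $s$-th key form produced by the $\delta_k$-run of Algorithm \ref{key-algorithm} coincides with the $s$-th key form $g_s$ of $\delta$; (ii) $\deg_x(g_s|_{y=\tilde\phi_k(x,\xi_k)})=\deg_x(g_s|_{y=\tilde\phi_\delta(x,\xi)})=:\tilde\omega_s$; and (iii) for $s<j_k$, the leading coefficient $\tilde c_s$ of $g_s|_{y=\tilde\phi_k}$ lies in $\cc^{*}$ and equals the corresponding coefficient for $\tilde\phi_\delta$, so the $\delta_k$-run enters the same case (2.2 or 2.3) at step $s$ as the $\delta$-run, producing the same $g_{s+1}$. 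The base case $s\le 1$ is immediate since $g_0=x$ and $g_1=y$ regardless of input. For the inductive step with $s<j_k$, the key observation is that $\tilde\phi_k$ and $\tilde\phi_\delta$ agree on every term of degree strictly greater than $r_k$; and if $j_{k'-1}<s\le j_{k'}$ for some $k'\le k$, then the inductive construction of $g_s$ has so far only involved the essential key forms $f_0,\ldots,f_{k'-1}$, whose $\delta$-values are in the group generated by $\omega_0,\ldots,\omega_{k'-1}$ (Proposition \ref{essential-value-prop}(\ref{essential-group})). Consequently $\tilde\omega_s$ lies in $\frac{1}{p_1\cdots p_{k'}}\zz\subseteq\frac{1}{p_1\cdots p_k}\zz$, and the leading term of $g_s|_{y=\tilde\phi}$ is formed entirely from the portion of $\tilde\phi$ of degree $>r_k$. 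Hence it is the same in both runs and lies in $\cc^{*}$.

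The case $s=j_k$ requires a slightly finer look. In the $\delta$-run, Case 2.3 is entered at step $j_k$ precisely because the $a_k x^{r_k}$ term of $\tilde\phi_\delta$ (of exponent in $\frac{1}{p_1\cdots p_k}\zz\setminus\frac{1}{p_1\cdots p_{k-1}}\zz$) first contributes a ``new denominator'' to $\tilde\omega_{j_k}$; the leading coefficient $\tilde c_{j_k}^{\delta}\in\cc^{*}$ is a specific polynomial expression in $a_1,\ldots,a_k$ and the integer-exponent coefficients above $r_k$. In the $\delta_k$-run, the same expression appears but with $a_k$ replaced by the indeterminate $\xi_k$, so $\tilde c_{j_k}^{\delta_k}$ is a non-constant polynomial in $\xi_k$, whence the $\delta_k$-algorithm enters Case 2.1 and terminates at $j_k$. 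This proves that the key forms of $\delta_k$ are exactly $g_0,\ldots,g_{j_k}$, with essential indices $j_0,\ldots,j_{k-1}$ (the level-transitions of the $\delta$-run, all of which take place below $j_k$) plus the terminal index $j_k$; so the essential key forms of $\delta_k$ are $f_0,\ldots,f_k$, proving assertions (\ref{delta-k-key-forms}) and (2). Assertion (\ref{delta-k-values}) follows at once from $\tilde\omega_s^{\delta_k}=\tilde\omega_s^{\delta}$ and the identity $\delta_k(g_s)=\tilde\omega_s^{\delta_k}\cdot\delta_k(x)$, combined with $\delta_k(x)/\delta(x)=(p_1\cdots p_k)/(p_1\cdots p_{l+1})$.

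The main technical obstacle I expect is the ``locality'' statement used in the inductive step, namely that for $s<j_k$ the leading term of $g_s|_{y=\tilde\phi}$ is determined by the terms of $\tilde\phi$ of degree $>r_k$ only. Although this is morally clear from the algorithm, its clean verification seems to require keeping track of how each Case 2.2/2.3 subtraction depends only on the essential key forms at strictly lower levels, via Proposition \ref{essential-value-prop}(\ref{alpha=p}) and Lemma \ref{delta-omega-lemma}(\ref{assertion-H-1-2}); this is essentially bookkeeping but is the one place where the argument is not quite cosmetic.
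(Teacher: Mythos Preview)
Your proposal is correct and takes essentially the same approach as the paper, which simply states that the lemma ``follows from a straightforward examination of Algorithm \ref{key-algorithm}'' without further detail. Your write-up supplies the bookkeeping the paper omits, and you correctly identify the one nontrivial point---that for $s<j_k$ the leading term of $g_s|_{y=\tilde\phi}$ depends only on $[\tilde\phi]_{>r_k}$---as requiring Proposition \ref{essential-value-prop} and the inductive structure of the algorithm.
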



Fix $k$, $1 \leq k \leq l+1$. In this subsection we assume condition \eqref{polynomial-cond} below is satisfied, and examine some of its implications.
\begin{align}
\tag{$\text{Polynomial}_k$}
\parbox{.5\textwidth}
{All the key forms of $\delta_k$ are polynomials.}  \label{polynomial-cond}
\end{align}  

\begin{lemma} \label{polytivity-lemma}
Assume \eqref{polynomial-cond} holds. Then $\delta(g_j) \geq 0$ for $0 \leq j \leq j_k-1$.
\end{lemma}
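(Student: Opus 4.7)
The plan is to proceed by induction on $j$, using the defining recursion of key forms (Definition~\ref{key-defn}, property~\eqref{next-property}) together with the semigroup identity from property~\eqref{semigroup-property}. The base case $j = 0$ will be trivial, since $\delta(g_0) = \delta(x) > 0$. For the inductive step I would fix $1 \leq j \leq j_k - 1$, assume $\delta(g_i) \geq 0$ for $0 \leq i < j$, and write down the recursion
\[
g_{j+1} \;=\; g_j^{\alpha_j} - \theta_j\, g_0^{\beta_{j,0}} g_1^{\beta_{j,1}} \cdots g_{j-1}^{\beta_{j,j-1}},
\]
in which $0 \leq \beta_{j,i} < \alpha_i$ for $1 \leq i \leq j-1$ and only $\beta_{j,0} \in \zz$ is a priori of arbitrary sign. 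The semigroup identity then reads $\alpha_j \delta(g_j) = \sum_{i=0}^{j-1} \beta_{j,i}\, \delta(g_i)$; once I can show $\beta_{j,0} \geq 0$, the right-hand side will be a sum of non-negative terms and the inductive step will close.

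To force $\beta_{j,0} \geq 0$ I plan to exploit hypothesis~\eqref{polynomial-cond} as follows. Because $j+1 \leq j_k$, Lemma~\ref{delta-k-lemma}\eqref{delta-k-key-forms} identifies $g_{j+1}$ with a key form of $\delta_k$, hence by \eqref{polynomial-cond} it is a polynomial, and $g_j^{\alpha_j}$ is a polynomial for the same reason. Therefore the single monomial expression
\[
\theta_j\, g_0^{\beta_{j,0}} g_1^{\beta_{j,1}} \cdots g_{j-1}^{\beta_{j,j-1}} \;=\; g_j^{\alpha_j} - g_{j+1}
\]
must lie in $\cc[x,y]$. Since each $g_i$ with $i \geq 1$ is monic in $y$, the product $g_1^{\beta_{j,1}} \cdots g_{j-1}^{\beta_{j,j-1}}$ is also monic in $y$, and the leading $y$-coefficient of the displayed expression is therefore exactly $\theta_j x^{\beta_{j,0}}$. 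Membership in $\cc[x,y]$ then rules out $\beta_{j,0} < 0$.

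The main delicate point I expect to have to spell out is the monic-in-$y$ property of the $g_i$ for $i \geq 1$, which is what lets me isolate $\theta_j x^{\beta_{j,0}}$ as the leading $y$-coefficient (rather than having it cancel against a stray contribution from elsewhere in the product). I plan to extract this from a quick inspection of Algorithm~\ref{key-algorithm}: in Case~2.2 the subtracted term has strictly smaller $y$-degree than $g_s$, and in Case~2.3 the same holds for $g_s^{p_{k+1}}$ versus the subtracted term, via the telescoping bound $\sum_{i=1}^{k} \beta_i^s\, p_1 \cdots p_i \leq p_1 \cdots p_k - 1$ that comes from the constraints $\beta_i^s < p_i$. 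Modulo this fact, the remainder of the argument is a direct manipulation of the key-form axioms.
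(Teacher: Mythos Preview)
Your argument is correct and is essentially a self-contained unpacking of the paper's one-line proof: the paper simply cites assertion~\eqref{key-semigroup} of Proposition~\ref{key-properties} (which packages the equivalence ``all $g_i$ polynomial $\Leftrightarrow$ each $\alpha_i\delta(g_i)$ lies in the semigroup generated by $\delta(g_0),\ldots,\delta(g_{i-1})$'') together with Lemma~\ref{delta-k-lemma}, and then the same induction you describe gives $\delta(g_j)\geq 0$. Your monic-in-$y$ argument to force $\beta_{j,0}\geq 0$ is exactly the content of the direction (a)$\Rightarrow$(b) of assertion~\eqref{key-semigroup}, so you are reproving that step rather than invoking it; the paper already records the monic-in-$y$ fact in the statement of assertion~\eqref{key-semigroup}, so you need not extract it from Algorithm~\ref{key-algorithm} yourself.
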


\begin{proof} 
This follows from combining assertion \eqref{key-semigroup} of Proposition \ref{key-properties} with assertion \eqref{delta-k-values} of Lemma \ref{delta-k-lemma}.
\end{proof}

Let $C_k := \cc[x,y_1, \ldots, y_k] \subseteq A_k$. Since $g_j$'s are polynomial for $0 \leq j \leq j_k$, Algorithm \ref{key-algorithm} implies that $F_j$'s (defined in \eqref{F-k}) are also polynomial for $0 \leq j \leq k$; in particular, $F_1 \in C_1$ and $F_{j+1} \in C_{j}$, $1 \leq j \leq k-1$. For $1 \leq j \leq k-1$, let $H_{j+1}$ be the {\em leading form} of $F_{j+1}$ with respect to $\omega$, i.e.\
\begin{align}
H_{j+1} := y_{j}^{p_j} - c_{j,0} x^{\beta^0_{j,0}} y_1^{\beta^0_{j,1}} \cdots y_{j-1}^{\beta^0_{j,j-1}},\quad 1 \leq j \leq k-1. \label{H-defn}
\end{align} 
Let $\prec$ be the reverse lexicographic order on $C_k$, i.e.\ $x^{\beta_0}y_1^{\beta_1} \cdots y_k^{\beta_k} \prec x^{\beta'_0}y_1^{\beta'_1} \cdots y_k^{\beta'_k}$ iff the right-most non-zero entry of $(\beta_0 - \beta'_0, \ldots, \beta_k - \beta'_k)$ is negative.\\

The following lemma is the main result of this subsection. Its proof is long, and we put it in Appendix \ref{polynomial-appendix}

\begin{lemma} \label{poly-positive-prop}
Assume \polsub{l+1} holds. Then 
\begin{enumerate}
\item \label{wp-assertion} Recall the notation of Section \ref{degree-like-section}, and define 
$$S^\delta := \dsum_{d \in \zz} \scrF^\delta_d \supseteq \cc[x,y]^{\delta}$$
Then $S^\delta$ is generated as a $\cc$-algebra by $(1)_1, (x)_{\omega_0}, (y_1)_{\omega_1}, \ldots,  (y_{l+1})_{\omega_{l+1}}$. 
\item \label{grobner-assertion} Let $J_{l+1}$ be the ideal in $C_{l+1}$ generated by the {\em leading weighted homogeneous forms} (with respect to $\omega$) of polynomials $F \in C_{l+1}$ such that $\delta(\pi(F)) < \omega(F)$. Then $\scrB_{l+1} :=(H_{l+1}, \ldots, H_2)$ is a Gr\"obner basis of $J_{l+1}$ with respect to $\prec$.
\end{enumerate}
\end{lemma}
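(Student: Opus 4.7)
My plan is to prove both assertions together via a substitution argument using the generic descending Puiseux series $\tilde\phi_\delta(x,\xi)$. The crux is the following sub-claim, which I will establish first:

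\emph{If $G \in C_{l+1}$ is $\omega$-homogeneous of degree $d$, satisfies $\deg_{y_j}(G) < p_j$ for $1 \leq j \leq l$, and $\delta(\pi(G)) < d$, then $G = 0$.}

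To prove the sub-claim, I first decompose $G = \sum_a Q_a(x,y_1,\ldots,y_l)\, y_{l+1}^a$ with each $Q_a$ satisfying the canonical-form bounds. Each $Q_a$ is $\omega$-homogeneous of degree $d - a\omega_{l+1}$, and by assertion \eqref{assertion-H-1-2} of Lemma~\ref{delta-omega-lemma} (applied at $k=l$) it is either zero or a single monomial $c_a M_a$ up to scalar (two distinct canonical-form monomials in $C_l$ have distinct $\omega$-values). So $G = \sum_a c_a M_a y_{l+1}^a$. Substituting $y \mapsto \tilde\phi_\delta(x,\xi)$: assertion \eqref{assertion-H} of Lemma~\ref{delta-omega-lemma} gives that $\pi_l(M_a)(x,\tilde\phi_\delta)$ has leading $x$-coefficient $\tau_a \in \cc^*$, and tracking Algorithm~\ref{key-algorithm} (Cases 2.2 and 2.3) shows $\tau_a$ is $\xi$-free, because the leading substitution coefficients $\tilde c_{j_k}$ of $f_k|_{y=\tilde\phi_\delta}$ for $k \leq l$ lie in $\cc^*$. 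By contrast, Case 2.1 (the stopping condition) guarantees $f_{l+1}|_{y=\tilde\phi_\delta}$ has leading $x$-coefficient $P(\xi) := \tilde c_{n+1} \in \cc[\xi]\setminus\cc$. Hence the leading $x$-coefficient of $\pi(G)(x,\tilde\phi_\delta)$ is $\sum_a c_a \tau_a P(\xi)^a$. The hypothesis $\delta(\pi(G)) < d$ forces this expression to vanish in $\cc[\xi]$; since $\{P(\xi)^a\}_{a\geq 0}$ is $\cc$-linearly independent ($P$ being non-constant) and each $\tau_a \neq 0$, every $c_a$ must vanish, so $G = 0$.

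Assertion \eqref{wp-assertion} then follows: for $f \in \cc[x,y]$, the canonical lift $F := F^\pi_f \in C_{l+1}$ from Lemma~\ref{F-Phi} is polynomial under \polsub{l+1}, and sub-additivity gives $\delta(f) \leq \omega(F)$. If the inequality were strict, the $\omega$-leading form $G$ of $F$ would be in canonical form with $\delta(\pi(G)) < \omega(G)$ (because $\pi(F-G) \in \scrF^\delta_{\omega(F)-1}$ by sub-additivity and $\pi(F) = f \in \scrF^\delta_{\delta(f)}$), contradicting the sub-claim. Hence $\omega(F) = \delta(f)$, and $F$ together with an appropriate power of $t := (1)_1$ exhibits $(f)_d$ as a polynomial in the claimed generators.

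For assertion \eqref{grobner-assertion}, the Gr\"obner basis property of $\scrB_{l+1}$ with respect to $\prec$ for the ideal $(H_{l+1},\ldots,H_2)$ is immediate from Buchberger's criterion, since the $\prec$-leading terms $y_k^{p_k}$ are pairwise coprime. The inclusion $(H_{l+1},\ldots,H_2) \subseteq J_{l+1}$ is clear because $H_{k+1}$ is the $\omega$-leading form of $R_{k+1} := y_{k+1} - F_{k+1}(x,y_1,\ldots,y_k) \in \ker\pi$. For the reverse, I take $F \in C_{l+1}$ $\omega$-homogeneous of degree $d$ with $\delta(\pi(F)) < d$ and reduce modulo $\scrB_{l+1}$ to $F_{\mathrm{red}}$ in canonical form (still $\omega$-homogeneous of degree $d$, since each $H_{k+1}$ is $\omega$-homogeneous). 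Noting that $\pi(H_{k+1}) = -\pi(R_{k+1} - H_{k+1})$ has $\delta$-value strictly less than $\omega(H_{k+1}) = p_k\omega_k$, sub-additivity gives $\delta(\pi(F - F_{\mathrm{red}})) < d$, hence $\delta(\pi(F_{\mathrm{red}})) < d$, and the sub-claim forces $F_{\mathrm{red}} = 0$. The main obstacle will be the sub-claim itself --- in particular, the bookkeeping through Algorithm~\ref{key-algorithm} needed to confirm that each $\tau_a$ is nonzero and $\xi$-free and that $P(\xi)$ is non-constant; once these facts are secured, the linear independence of $\{P^a\}_{a\geq 0}$ over $\cc$ closes the argument and yields both assertions simultaneously.
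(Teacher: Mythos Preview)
Your proof is correct. The sub-claim and its substitution proof via $\tilde\phi_\delta(x,\xi)$ are essentially the paper's own argument for assertion~\eqref{wp-assertion}: the paper also expands $f$ in canonical form, substitutes $y=\tilde\phi_\delta$, and uses that the leading $x$-coefficient of $f_j|_{y=\tilde\phi_\delta}$ lies in $\cc^*$ for $j\le l$ but is $c^*_{l+1}\xi+c_{l+1}$ for $j=l+1$, together with assertion~\eqref{assertion-H-1-2} of Lemma~\ref{delta-omega-lemma}, to conclude nonvanishing. So for part~\eqref{wp-assertion} the two proofs coincide, only the packaging differs.

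For assertion~\eqref{grobner-assertion} your route genuinely diverges from the paper's. The paper first proves an auxiliary structural result (their Lemma~\ref{semi-isomorphism}) that $C_j/(H_2,\ldots,H_{j+1})\cong\cc[t^{\omega_0},\ldots,t^{\omega_j}]$, then uses a dimension count on $S^\delta/((1)_1)$ to establish that $(H_2,\ldots,H_{l+1})$ actually equals $J_{l+1}$, and finally verifies the Gr\"obner property by an explicit Buchberger $S$-polynomial computation. You bypass all of this: the ideal equality drops out of your sub-claim via reduction (the remainder $F_{\mathrm{red}}$ is in canonical form, $\omega$-homogeneous, and still satisfies $\delta(\pi(F_{\mathrm{red}}))<d$, hence vanishes), and the Gr\"obner property is immediate from the pairwise-coprime leading terms $y_k^{p_k}$. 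Your approach is shorter and more unified, since both assertions flow from the single sub-claim; the paper's approach is longer but yields the extra structural information $C_j/\bar J_{j+1}\cong\cc[\Omega_j]$ along the way.
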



\section{Proof of the main results} \label{proof-section}
In this section we give proofs of Theorems \ref{effective-answer}, \ref{geometric-answer}, \ref{graph-thm} and \ref{cousin-answer}. 

%

\begin{proof}[\bf Proof of Theorem \ref{cousin-answer}]

The implication \eqref{all-cousinomials} $\im$ \eqref{last-cousinomial} is obvious. It is straightforward to see the following reformulation of assertion \eqref{cousin-polynomial}:

\begin{lemma} \label{cousin-polynomial-lemma}
Assertion \eqref{cousin-polynomial} equivalent to the following assertion:
\begin{enumerate}
\let\oldenumi\theenumi
\renewcommand{\theenumi}{$\oldenumi'$}
\item \label{cousin-polynomial'} There exists a polynomial $f \in \cc[x,y]$ such that for each analytic branch $C$ of the curve $f = 0$ at infinity, 
\begin{itemize}
	\item $C$ intersects $L_\infty$ at $O$,
	\item  $C$ has a {\em descending} Puiseux expansion $y = \theta(x)$ at $O$ such that $\deg_x(\theta - \phi) \leq 1-r$. \qed
\end{itemize}
\end{enumerate}
\end{lemma}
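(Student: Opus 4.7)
The proof is essentially a direct unwinding of the change of coordinates $(x,y)=(1/u,v/u)$ recorded in the paragraph preceding the statement, so the plan is to verify that the two conditions attached to each analytic branch $C$ correspond to one another term by term under this substitution, while the existence of the polynomial $f$ is the \emph{same} condition in both assertions.

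First I would fix a polynomial $f\in\cc[x,y]$ and an analytic branch $C$ of $\{f=0\}$ at infinity, and observe that requiring $C$ to meet $L_\infty$ at $O$ is a condition intrinsic to $\bar X$ and $O$ and therefore appears identically in \eqref{cousin-polynomial} and \eqref{cousin-polynomial'}. So the only content of the lemma is the equivalence, for such a branch, of the two growth conditions. Next I would set up the coordinate dictionary: since $(u,v)=(1/x,y/x)$, a local meromorphic parametrization $v=\theta(u)$ of $C$ near $O$ with $\theta(u)=\sum_{m\ge M}a_m u^{m/p}$ (ordinary Puiseux series in $u$) corresponds bijectively to the descending Puiseux expansion
\[
y \;=\; x\,\theta(1/x) \;=\; \sum_{m\ge M} a_m\, x^{1-m/p}
\]
of $C$ in the $(x,y)$-chart. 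Conversely any descending Puiseux expansion $y=\tilde\theta(x)$ of $C$ at $O$ arises in this way from $\theta(u):=u\,\tilde\theta(1/u)$. This is exactly the relation that the paragraph preceding the lemma uses to define $\phi(x):=x\psi(1/x)$, so the same transformation sends $\psi$ to $\phi$.

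Then I would translate the growth condition. Writing $\theta(u)-\psi(u)=\sum_{m} b_m u^{m/p}$, the transform is $x\bigl(\theta(1/x)-\psi(1/x)\bigr)=\sum_m b_m x^{1-m/p}$. Since $\ord_u$ picks out the smallest exponent of $u$ with non-zero coefficient and $\deg_x$ picks out the largest exponent of $x$ with non-zero coefficient, and the map $m/p\mapsto 1-m/p$ reverses order, I get the elementary identity
\[
\deg_x\!\bigl(\tilde\theta(x)-\phi(x)\bigr) \;=\; 1-\ord_u\!\bigl(\theta(u)-\psi(u)\bigr).
\]
Hence $\ord_u(\theta-\psi)\ge r$ holds if and only if $\deg_x(\tilde\theta-\phi)\le 1-r$. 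Applying this to every branch of $\{f=0\}$ that meets $L_\infty$ at $O$ yields the equivalence of \eqref{cousin-polynomial} and \eqref{cousin-polynomial'} for the same polynomial $f$.

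There is no real obstacle here; the only bookkeeping point I would double-check is that the substitution $u\mapsto 1/x$ maps convergent Puiseux series in $u$ about $u=0$ to descending Puiseux series in $x$ representing the same analytic branch of $C$ near the point $O\in L_\infty$, which is standard since $C$ meets $L_\infty$ precisely at $O$ and the chart $(x,y)$ covers $\pp^2\setminus L_\infty\supseteq C\setminus\{O\}$.
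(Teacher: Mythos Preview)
Your argument is correct and is exactly the straightforward verification the paper has in mind: the paper gives no proof at all beyond the phrase ``It is straightforward to see'' and the \qed\ in the statement, so what you have written is precisely the coordinate-change computation that justifies it. The key identity $\deg_x(\tilde\theta-\phi)=1-\ord_u(\theta-\psi)$ under $(x,y)=(1/u,v/u)$ is the whole content, and you have checked it cleanly.
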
 
Assertion \eqref{last-expansion} of Theorem \ref{key-thm} implies that if $g_{n+1}$ is a polynomial, then $g_{n+1}$ satisfies the properties of $f$ from \eqref{cousin-polynomial'}; in particular \eqref{last-cousinomial} $\im$ \eqref{cousin-polynomial'}. To finish the proof of Theorem \ref{cousin-answer} it remains to prove that \eqref{cousin-polynomial'} $\im$ \eqref{all-cousinomials}. So assume \eqref{cousin-polynomial'} holds. We proceed by contradiction, i.e.\ we also assume that there exists $m$, $1 \leq m \leq n$, such that $g_{m+1}$ is not a polynomial, and show that this leads to a contradiction. By assertion \eqref{truncassertion} of Proposition \ref{key-properties}, we may (and will) assume that $m = n$.\\

We adopt the notations of Sections \ref{lifting-section} and \ref{polynomial-section}. In particular, we write $\tilde \phi_\delta(x,\xi)$ (resp.\ $\phi_\delta(x)$, $r_\delta$) for $\tilde \phi(x,\xi)$ (resp.\ $[\phi(x)]_{> 1-r}$, $1-r$), and denote by $(q_1, p_1), \ldots, (q_{l + 1}, p_{l+1})$ the formal Puiseux pairs of $\tilde \phi_\delta$. We also denote by $g_{j_0}, \ldots, g_{j_{l+1}}$ the sequence of essential key forms, and set $f_k := g_{j_k}$, $0 \leq k \leq l+1$.\\

Let $f \in \cc[x,y]$ be as in \eqref{cousin-polynomial'}. By assumption $f$ has a descending Puiseux factorization of the form
\begin{align}
f &= a\prod_{m=1}^M f_{\psi_m} \label{f-decomposition} 
\end{align}
for some $a \in \cc^*$ and $\psi_1, \ldots, \psi_m \in \dpsxc$ such that 
\begin{align}
\psi_m &\equiv_{r_\delta} \phi_\delta,\quad \text{for each}\ m,\ 1 \leq m \leq M, \label{psi-n-equivalence}
\end{align}
where $f_{\psi_m}$'s are defined as in \eqref{f-phi-eqn}. W.l.o.g.\ we may (and will) assume that $a = 1$. \\

At first we claim that $l \geq 1$. Indeed, assume to the contrary that $l=0$. Then
\begin{align*}
\tilde \phi_\delta(x,\xi) = h(x) + \xi x^{r_\delta}
\end{align*}
for some $h \in \cc[x,x^{-1}]$. Since $g_{n+1}$ is not a polynomial, assertion \eqref{key-semigroup} of Proposition \ref{key-properties} implies that $h(x) = h_1(x)+ h_2(x)$ where $h_1 \in \cc[x]$, $h_2 \in \cc[x^{-1}]\setminus \cc$, and $0 > \deg_x(h_2(x)) > r_\delta$. Let $e := -\deg_x(h_2(x)) > 0$ and $y' := y - h_1(x)$. Then \eqref{f-decomposition} implies that $f$ is a product of elements in $\dpsxc[ y']$ of the form $ y' - \psi_{m,i}(x)$ for $\psi_{m,i} \in \dpsxc$ such that each $\psi_{m,i}(x) = h_2(x) + \ldt$, where $\ldt$ denotes terms with degree smaller than $\ord_x(h_2) < -e$. It is then straightforward to see that $f \not\in \cc[x, y'] = \cc[x,y]$, which contradicts our choice of $f$. It follows that $l \geq 1$, as claimed.\\

Let $F_k$, $1 \leq k \leq l+1$, be as in \eqref{F-k}. Fix $m$, $1 \leq m \leq M$. Then \eqref{psi-n-equivalence} and Lemma \ref{comparison-lemma-2} imply that 
\begin{align}
F_{\psi_m}^{(l)} = F_{l+1} + \tilde F_m
\end{align}
where $\tilde F_m \in \tilde A_l := \dpsxc[y_1, \ldots, y_l]$ and $\omega(\tilde F_m) \leq \omega_{l+1}$. Let $s_m$ denote the polydromy order of $\psi_m$ and $\mu_m$ be a primitive $s_m$-th root of unity. Identity \eqref{psi-n-equivalence} implies that $t_m := s_m/(p_1p_2 \cdots p_{l})$ is a positive integer. Therefore \eqref{f-phi-n-m} and assertion \eqref{star-projection} of Lemma \ref{star-lemma} imply that 
\begin{align}
f_{\psi_m}  
&= \prod_{j=0}^{t_m-1} \mu_k^{jp_1 \cdots p_l} \star_{s_m} \left(f_{\psi_m}^{(l)}\right)
= \prod_{j=0}^{t_m-1} \mu_m^{jp_1 \cdots p_l} \star_{s_m} \left(\pi_l \left(F_{l+1} + \tilde F_m\right) \right) 
= \pi_l(G_m)\quad \text{where} \label{f-psi-n} \\
G_m &:= \prod_{j=0}^{t_m-1} \left(F_{l+1} + \mu_m^{jp_1 \cdots p_l} \star_{s_m}\left(\tilde F_m \right) \right) \in \tilde B_l. \label{G-n-defn}
\end{align}
Recall that $F_{l+1} = y_{l}^{p_{l}} - \sum_{i=1}^{m_l}c_{l,i} x^{\beta^i_{l,0}} y_1^{\beta^i_{l,1}} \cdots y_{l}^{\beta^i_{l,l}}$. Since by our assumption all the key forms but the last one are polynomials, it follows from assertion \eqref{key-semigroup} of Proposition \ref{key-properties} that $\beta^{i}_{l,0} \geq 0$ for all $i < m_l$, but $\beta^{i}_{m_l,0} < 0$; set 
\begin{align}
\omega'_{l+1} := \omega\left(x^{\beta^{m_l}_{l,0}} y_1^{\beta^{m_l}_{l,1}} \cdots y_{l}^{\beta^{m_l}_{l,l}}\right) = \sum_{i=0}^l \beta^{m_l}_{l,i}\omega_i.
\end{align}
Then $\omega'_{l+1} > \omega_{l+1}$ and therefore we may express $G_m$ as
\begin{align}
G_m	&= \prod_{j=0}^{t_m - 1} \left(y_{l}^{p_{l}} - \sum_{i=0}^{m_l}c_{l,i} x^{\beta^i_{l,0}} y_1^{\beta^i_{l,1}} \cdots y_{l}^{\beta^i_{l,l}} - G_{m,j} \right) , \label{G-n}
\end{align} 
for some $G_{m,j} \in \tilde B_l$ with $\omega(G_{m, j}) < \omega'_{l+1}$. Identities \eqref{f-decomposition}, \eqref{f-psi-n} and \eqref{G-n} imply that $f = \pi_l(F)$ for some $F \in \tilde A_l$ of the form
\begin{align}
F = \prod_{m'=1}^{M'} \left(y_{l}^{p_{l}}  - \sum_{i=0}^{m_l}c_{l,i} x^{\beta^i_{l,0}} y_1^{\beta^i_{l,1}} \cdots y_{l}^{\beta^i_{l,l}} - G'_{m'} \right),
\end{align}
where $\omega(G'_{m'}) < \omega'_{l+1}$ for all $m'$, $1 \leq m' \leq M'$. Let 
\begin{align*}
G := \begin{cases}
F - y_l^{M'p_{l}}	
& \text{if}\ m_l=0,\\
F - \left(y_l^{p_{l}} - \sum_{i=0}^{m_l-1}c_{l,i} x^{\beta^i_{l,0}} y_1^{\beta^i_{l,1}} \cdots y_{l}^{\beta^i_{l,l}} \right)^{M'}
& \text{otherwise.}
\end{cases}
\end{align*}
Recall from assertion \eqref{zero} of Lemma \ref{f-k-lemma} that $\beta^0_{l,l} = 0$. It follows that the {\em leading weighted homogeneous form} of $ G$ with respect to $\omega$ is
\begin{align} \label{ld-omega}
\ld_{\omega}(G) = 
\begin{cases}
- c_{l,0}x^{\beta^0_{l,0}} y_1^{\beta^0_{l,1}} \cdots y_{l-1}^{\beta^0_{l,l-1}} & \text{if}\ m_l = 0\ \text{and}\ M' = 1, \\
\sum_{i=1}^{M'} \binom{M'}{i} (-c_{l,0})^i y_l^{(M'-i)p_{l}} x^{i\beta^0_{l,0}} y_1^{i\beta^0_{l,1}} \cdots y_{l-1}^{i\beta^0_{l,l-1}} & \text{if}\ m_l = 0, \text{and}\ M' > 1,\\
M'c_{l,m_l}\left(y_l^{p_{l}} - c_{l,0}x^{\beta^0_{l,0}} y_1^{\beta^0_{l,1}} \cdots y_{l-1}^{\beta^0_{l,l-1}}\right)^{M'-1} x^{\beta^{m_l}_{l,0}} y_1^{\beta^{m_l}_{l,1}} \cdots y_{l}^{\beta^{m_l}_{l,l}} & \text{otherwise.}
\end{cases}
\end{align}
Since $\pi_l(F) = f \in \cc[x,y]$, it follows that $g:= \pi_l(G)$ is also a polynomial in $x$ and $y$. Assertion \eqref{wp-assertion} of Lemma \ref{poly-positive-prop} then implies that there is a polynomial $\tilde G \in C_l := \cc[x,y_1, \ldots, y_l]$ such that $\pi_l(\tilde G) = g$ and $\omega(\tilde G) = \delta_l(g)$. In particular, $\omega(\tilde G) \leq \omega(G)$.

\begin{claim} \label{omega-and-tilde}
$\omega(\tilde G) = \omega(G)$.
\end{claim}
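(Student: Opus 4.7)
The direction $\omega(\tilde G) \leq \omega(G)$ has already been noted, so the plan is to establish the reverse $\omega(G) \leq \omega(\tilde G)$ by showing that the specific element $G \in \tilde A_l$ arising from the construction in \eqref{f-psi-n}--\eqref{G-n-defn} realizes the minimal $\omega$-value among pre-images of $g$ under $\pi_l$.

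First, I will compute the $\prec$-leading monomial of $\ld_\omega(G)$ in each of the three cases of \eqref{ld-omega}. Combining with the bounds $0 \le \beta^i_{l,j} < p_j$ from Lemma \ref{f-k-lemma}\eqref{exponent-bound}, I will verify that in every case this monomial has $y_j$-exponent strictly less than $p_j$ for each $j \in \{1, \ldots, l-1\}$: it is $x^{\beta^0_{l,0}}y_1^{\beta^0_{l,1}}\cdots y_{l-1}^{\beta^0_{l,l-1}}$ in case 1, $y_l^{(M'-1)p_l}x^{\beta^0_{l,0}}y_1^{\beta^0_{l,1}}\cdots y_{l-1}^{\beta^0_{l,l-1}}$ in case 2 (coming from the $i=1$ term, which dominates in $\prec$), and $y_l^{(M'-1)p_l+\beta^{m_l}_{l,l}}x^{\beta^{m_l}_{l,0}}y_1^{\beta^{m_l}_{l,1}}\cdots y_{l-1}^{\beta^{m_l}_{l,l-1}}$ in case 3. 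After multiplication by a sufficiently high power $x^N$ (needed only in case 3, to clear the negative exponent $\beta^{m_l}_{l,0}$), the element $x^N \ld_\omega(G)$ lies in $C_l$ and its $\prec$-leading monomial falls outside $(y_1^{p_1}, \ldots, y_{l-1}^{p_{l-1}})$.

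Next I will invoke assertion \eqref{grobner-assertion} of Lemma \ref{poly-positive-prop} applied to the semidegree $\delta_l$ (legitimate since \polsub{l} holds by hypothesis): this produces a Gr\"obner basis $\scrB_l = (H_l, \ldots, H_2)$ of $J_l \subseteq C_l$ under $\prec$ whose initial ideal is exactly $(y_1^{p_1}, \ldots, y_{l-1}^{p_{l-1}})$. The previous step therefore shows $x^N \ld_\omega(G) \notin J_l$.

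Finally, I will argue by contradiction. If $\omega(G) > \omega(\tilde G)$, then the kernel element $K := G - \tilde G \in \ker\pi_l$ satisfies $\ld_\omega(K) = \ld_\omega(G)$. Exploiting that $\ld_\omega(G)$ already lives in $A_l = C_l[x^{-1}]$ -- visible from \eqref{ld-omega}, since the Puiseux-coefficient contributions of the $\tilde F_m$'s to $G$ in \eqref{G-n-defn} enter only through factors of $\omega$-weight at most $\omega_{l+1}$, and $\omega_{l+1} < \omega'_{l+1} \le \omega(G)$ -- multiplication through by $x^N$ will produce an element of $\ker(\pi_l|_{C_l})$ whose leading $\omega$-form is $x^N \ld_\omega(G)$; by the defining property of $J_l$ this form must lie in $J_l$, contradicting the preceding step. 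The main technical obstacle will be precisely this passage from $\tilde A_l$ to $C_l$: one must verify that the top-$\omega$ part of $G$ is free of genuine descending-Puiseux contributions, which ultimately rests on tracking, through \eqref{G-n}, how the conjugation $\mu_m^{jp_1 \cdots p_l} \star_{s_m}$ acts on $\tilde F_m$ at each weight level and confirming that no Puiseux coefficient survives in $\ld_\omega(G)$.
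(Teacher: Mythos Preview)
Your strategy is the paper's: compute the $\prec$-leading monomial of (a power of $x$ times) $\ld_\omega(G)$ case by case, check that its $y_j$-exponents are all below $p_j$ for $j<l$, and contradict membership in $J_l$ via the Gr\"obner basis of Lemma~\ref{poly-positive-prop}\eqref{grobner-assertion}. Two points need correcting.

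First, the parenthetical ``needed only in case 3'' is wrong. The running hypothesis is $\beta^{m_l}_{l,0}<0$; when $m_l=0$ this reads $\beta^0_{l,0}<0$, so in cases 1 and 2 the monomials you wrote already carry a negative $x$-exponent and you must multiply by $x^N$ there too (as the paper does, with $\alpha$ the least such power).

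Second, and more substantively, the detour through $K=G-\tilde G$ does not do what you want. Even after you check that $\ld_\omega(G)\in A_l$ (which \eqref{ld-omega} already hands you), the lower-weight part of $G$ still carries genuine descending-Puiseux coefficients coming from the $\tilde F_m$'s; hence $x^N K\notin C_l$ and you cannot ``produce an element of $\ker(\pi_l|_{C_l})$'' from it. The ``main technical obstacle'' you flag is therefore misdiagnosed: the issue is not whether $\ld_\omega(G)$ is Puiseux-free (it is), but that the tail of $K$ is not. The paper sidesteps this entirely. Once $x^\alpha\ld_\omega(G)\in C_l$, one argues directly that $\delta_l(\pi_l(\ld_\omega(G)))<\omega(G)$: indeed $\pi_l(G)=g$ with $\delta_l(g)<\omega(G)$ by assumption, and $\delta_l(\pi_l(G-\ld_\omega(G)))\le\omega(G-\ld_\omega(G))<\omega(G)$. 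Thus $x^\alpha\ld_\omega(G)$ is itself an $\omega$-homogeneous element of $C_l$ whose $\pi_l$-image has $\delta_l$-value strictly below its $\omega$-weight, so $x^\alpha\ld_\omega(G)\in J_l$ by definition---no kernel element needed, and no need to control the Puiseux tail of $G$.
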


\begin{proof}
Let $\prec$ be the reverse lexicographic monomial ordering on $C_l$ from Section \ref{polynomial-section} and let $\alpha$ be the smallest positive integer such that $x^\alpha \ld_{\omega}(G)$ is a polynomial. Then \eqref{ld-omega} implies that the {\em leading term} of $x^\alpha \ld_{\omega}(G)$ with respect to $\prec$ is

\begin{align} \label{lt-G}
\lt_{\prec}\left(x^\alpha\ld_{\omega}\left(G \right)\right) 
= \begin{cases}
- c_{l,0} y_1^{\beta^0_{l,1}} \cdots y_{l-1}^{\beta^0_{l,l-1}} 
& \text{if}\ m_l = 0\ \text{and}\ M' = 1, \\
- c_{l,0} M' y_l^{(M'-1)p_{l}} x^{(M'-1)|\beta^0_{l,0}|} y_1^{\beta^0_{l,1}} \cdots y_{l-1}^{\beta^0_{l,l-1}} 
& \text{if}\ m_l = 0, \text{and}\ M' > 1,\\
M'c_{l,m_l}y_l^{(M'-1)p_{l}+ \beta^{m_l}_{l,l}} y_1^{\beta^{m_l}_{l,1}} \cdots y_{l-1}^{\beta^{m_l}_{l,l-1}} & \text{otherwise.}
\end{cases}
\end{align}
Assume contrary to the claim that $\omega(G) > \omega(\tilde G) = \delta_l(g)$. Then $x^\alpha \ld_\omega(G) \in J_l$, where $J_l$ is the ideal from assertion \eqref{grobner-assertion} of Lemma \ref{poly-positive-prop}. Assertion \eqref{grobner-assertion} of Lemma \ref{poly-positive-prop} then implies that there exists $j$, $1 \leq j \leq l-1$, such that $y_{j}^{p_{j}} = \lt_{\prec}(H_{j+1})$ divides $\lt_{\prec}(x^\alpha\ld_{\omega}(G))$. But this contradicts the fact that $\beta^{m_l}_{l,j'} < p_{j'}$ for all $j'$, $1 \leq j' \leq l-1$ (assertion \eqref{exponent-bound} of Lemma \ref{f-k-lemma}) and completes the proof of the claim.
\end{proof}

Let $J_l$ and $\alpha$ be as in the proof of Claim \ref{omega-and-tilde}. Note that $\ld_{\omega}(x^\alpha \tilde G) \not\in J_l$ by our choice of $\tilde G$. Therefore, after `dividing out' $\tilde G$ by the Gr\"obner basis $\scrB_l$ of Lemma \ref{poly-positive-prop} (which does not change $\omega(\tilde G)$) if necessary, we may (and will) assume that 
\begin{align}
\parbox{.9\textwidth}{$y_{j}^{p_{j}}$ does {\em not} divide any of the monomial terms of $\ld_\omega(x^\alpha \tilde G)$ for any $j$, $1 \leq j \leq l-1$.} \label{tilde-G-division}
\end{align}
Since $\pi_l(x^\alpha G - x^\alpha \tilde G) = 0$, it follows that $\ld_\omega(x^\alpha G - x^\alpha \tilde G) \in J_l$. Since $\omega(G) = \omega(\tilde G)$ by Claim \ref{omega-and-tilde}, it follows that $H^* := \ld_\omega(x^\alpha G) - \ld_\omega(x^\alpha \tilde G) \in J_l$. Let 
\begin{align*}
H := \lt_{\prec}\left(\ld_\omega(x^\alpha G)\right)\ \text{and}\ \tilde H := \lt_{\prec}\left(\ld_\omega(x^\alpha \tilde G)\right).
\end{align*}
Since $\tilde G \in \cc[x,y_1, \ldots, y_l]$, it follows that $\deg_x(\tilde H) \geq \alpha$. On the other hand, \eqref{lt-G} implies that $\deg_x(H) = \alpha + \beta^0_{m_l,0} < \alpha$. It follows in particular that $H \neq \tilde H$ and $\lt_{\prec}(H^*) = \max_{\prec}\{H, -\tilde H\}$. Then \eqref{lt-G} and \eqref{tilde-G-division} imply that $y_{j}^{p_{j}} = \lt_{\prec}(H_{j})$ does {\em not} divide $\lt_{\prec}(H^*)$ for any $j$, $1 \leq j \leq l-1$. This contradicts assertion \eqref{grobner-assertion} of Lemma \ref{poly-positive-prop} and finishes the proof of the implication \eqref{cousin-polynomial'} $\im$ \eqref{all-cousinomials}, as required to complete the proof of Theorem \ref{cousin-answer}.
\end{proof}

\begin{proof}[\bf Proof of Theorem \ref{effective-answer}]
Theorem \eqref{cousin-answer} implies that \eqref{all-polynomials} $\Leftrightarrow$ \eqref{last-polynomial}. Now assume \eqref{all-polynomials} is true. Note that $\delta(f) > 0$ for each non-constant $f \in \cc[x,y]$ (since such an $f$ must have a pole at the irreducible curve $E'_1 := \pi'(E_1)\subseteq Y'$); so that the ring $S^\delta$ defined in Lemma \ref{poly-positive-prop} is precisely the ring $\cc[x,y]^\delta$ from Section \ref{degree-like-section}. Assertion \eqref{wp-assertion} of Lemma \ref{poly-positive-prop} and Proposition \ref{basic-semi-prop} then imply that $Y'$ is isomorphic to the closure of the image of $\cc^2$ in the weighted projective variety $\pp^{l+2}(1,\delta(f_0), \ldots, \delta(f_{l+1}))$ under the mapping $(x,y) \mapsto [1:f_0:\cdots:f_{l+1}]$. This in particular shows that \eqref{all-polynomials} $\im$ \eqref{algebraic-Y'}.\\

It remains to show that \eqref{algebraic-Y'} $\im$ \eqref{all-polynomials}. So assume that $Y'$ is algebraic. Recall the set up of Proposition \ref{prop-param}. We can identify $Y'$ with $\bar X$ and $E'_1$ with $C_\infty$ (where $\bar X$ and $C_\infty$ are as in Proposition \ref{prop-param}). Let $P_\infty \in C_\infty$ be as in Proposition \ref{prop-param}. Since $Y'$ is algebraic, there exists a compact algebraic curve $C$ on $Y'$ such that $P_\infty \not\in C$. Let $f \in \cc[x,y]$ be the polynomial that generates the ideal of $C$ in $\cc[x,y]$. Proposition \ref{prop-param} then implies that $f$ satisfies the condition of property \eqref{cousin-polynomial'} from Lemma \ref{cousin-polynomial-lemma}. Theorem \ref{cousin-answer} and Lemma \ref{cousin-polynomial-lemma} then show that \eqref{all-polynomials} is true, as required. 
\end{proof}

\begin{proof}[\bf Proof of Theorem \ref{geometric-answer}]
Let $\delta$ be the semidegree on $\cc[x,y]$ corresponding to the curve at infinity on $\bar X$, $\tilde \phi_\delta(x,\xi)$ be the associated generic descending Puiseux series, and $g_0, \ldots, g_{n+1} \in \cc[x,x^{-1},y]$ be the key forms. If $\bar X$ is algebraic, then Theorem \ref{effective-answer} implies that $g_{n+1}$ is a polynomial. Proposition \ref{prop-param} and assertion \eqref{last-expansion} of Theorem \ref{key-thm} then imply that $g_{n+1} = 0$ defines a curve $C$ as in assertion \eqref{one-place-curve} of Theorem \ref{geometric-answer}. This proves the implication \eqref{algebraic-X-bar} $\im$ \eqref{one-place-curve}, and also the last assertion of Theorem \ref{geometric-answer}. It remains to prove the implication \eqref{one-place-curve} $\im$ \eqref{algebraic-X-bar}. So assume there exists $f \in \cc[x,y]$ such that $C := \{f = 0\}$ is as in \eqref{one-place-curve}. Proposition \ref{prop-param} implies that $f$ satisfies the condition of property \eqref{cousin-polynomial'} from Lemma \ref{cousin-polynomial-lemma}. Lemma \ref{cousin-polynomial-lemma}, Theorem \ref{cousin-answer} and Theorem \ref{effective-answer} then imply that $\bar X$ is algebraic, as required. 
\end{proof}

\begin{proof}[\bf Proof of Theorem \ref{graph-thm}]
%

The $(\im)$ direction of assertion \eqref{algebraic-graph} follows from Theorem \ref{effective-answer} and assertion \eqref{key-semigroup} of Proposition \ref{key-properties}. For the $(\Leftarrow)$ implication, note that assertion \eqref{alpha=p} of Proposition \ref{essential-value-prop} and Property \eqref{semigroup-property} of key forms imply that for each $k$, $1 \leq k \leq l$,
\begin{align*}
p_k\omega_k = \sum_{j = 0}^{k-1}\beta'_{k,j}\omega_j
\end{align*}
where $\beta'_{k,j}$'s are integers such that $0 \leq \beta'_{k,j} < p_j$ for $1 \leq j <k$. Define $g^0_k$, $0  \leq k \leq l+1$, as follows:
\begin{align*}
g^0_k &= 
			\begin{cases}
				x &\text{if}\ k = 0,\\
				y &\text{if}\ k = 1,\\
				(g^0_{k-1})^{p_{k-1}} - \prod_{j=0}^{k-2} (g^0_j)^{\beta'_{k-1,j}} &\text{if}\ 2 \leq k \leq l+1.
			\end{cases}
\end{align*}
Assertion \eqref{key-to-degree} of Theorem \ref{key-thm} implies that there is a unique semidegree $\delta^0$ on $\cc[x,y]$ such that its key forms are $g^0_0, \ldots, g^0_{l+1}$ and $\delta^0(g^0_k) =  \omega_k$, $0 \leq k \leq l+1$. Since $\omega_{l+1} >  0$ (assertion \ref{converse-graph} of Theorem \ref{primitive-resolution-graph}) it follows that $\delta^0$ defines a primitive normal compactification $\bar X^0$ of $\cc^2$ (Remark \ref{analytic-contractibility}). It follows from Proposition \ref{essential-value-prop} that $(q_k,p_k)$, $1 \leq k \leq l+1$, are uniquely determined in terms of $\omega_0, \ldots, \omega_{l+1}$. Therefore $\Gamma$ is precisely the augmented and marked dual graph associated to the minimal $\pp^2$-dominating resolution of singularities of $\bar X^0$. Now, if \eqref{semigroup-criterion-1} holds for each $k$, $1 \leq k \leq l$, then each $\beta'_{k,0}$ is non-negative, so that each $g^0_k$ is a polynomial. Theorem \ref{effective-answer} then implies that $\bar X^0$ is algebraic, which proves the $(\Leftarrow)$ implication of assertion \eqref{algebraic-graph}. \\

Now we prove assertion \eqref{non-algebraic-graph}. For the $(\im)$ implication, pick a non-algebraic normal primitive compactification $\bar X$ of $\cc^2$ such that $\Gamma = \Gamma_{\bar X}$. Let $\delta$ be the order of pole along the curve at infinity on $\bar X$. Theorem \ref{effective-answer} implies that at least one of the key forms of $\delta$ is not a polynomial. Assertions \eqref{omega-from-p} and \eqref{essential-group} of Proposition \ref{essential-value-prop} and assertion \eqref{key-semigroup} of Proposition \ref{key-properties} now imply that either \eqref{semigroup-criterion-1} or \eqref{semigroup-criterion-2} fails, as required. It remains to prove the $(\Leftarrow)$ implication of assertion \eqref{non-algebraic-graph}. Let $g^0_k$, $0 \leq k \leq l+1$, be as in the preceding paragraph. If \eqref{semigroup-criterion-1} fails for some $k$, $1 \leq k \leq l$, take the smallest such $k$. Then by construction $g^0_k$ is {\em not} a polynomial, so that $\bar X^0$ is not algebraic (Theorem \ref{effective-answer}), as required. Now assume that \eqref{semigroup-criterion-1} holds for all $k$, $1 \leq k \leq l$, but there exists $k$, $1 \leq k \leq l$, such that \eqref{semigroup-criterion-2} fails; let $k$ be the smallest integer such that \eqref{semigroup-criterion-2} fails. Pick $\tilde \omega \in (\omega_{k+1}, p_k \omega_k) \cap \zz\langle \omega_0, \ldots, \omega_k \rangle \setminus \zz_{\geq 0}\langle \omega_0, \ldots, \omega_k \rangle$. Then it is straightforward to see that there exist integers $\tilde \beta_0, \ldots, \tilde \beta_k $ such that $\tilde \beta_0 < 0$, $0 \leq \tilde \beta_j < p_j$, $1 \leq j <k$, and 
 \begin{align*}
\tilde \omega = \sum_{j = 0}^{k-1}\tilde \beta_j \omega_j
\end{align*}
Define $g^1_i$, $0  \leq i \leq l+2$, as follows:
\begin{align*}
g^1_i &= 
			\begin{cases}
				g^0_i &\text{if}\ 0 \leq i \leq k+1,\\
				g^0_{k+1}- \prod_{j=0}^{k} (g^0_j)^{\tilde \beta_j} &\text{if}\ i = k + 2,\\
				(g^1_{i-1})^{p_{i-2}} - \prod_{j=0}^{k} (g^1_j)^{\beta'_{i-2,j}}\prod_{j=k+2}^{i-2} (g^1_j)^{\beta'_{i-2,j-1}} &\text{if}\ k+3 \leq i \leq l+2.
			\end{cases}
\end{align*}
The same arguments as in the proof of assertion \eqref{algebraic-graph} imply that there is a primitive normal compactification $\bar X^1$ of $\cc^2$ such that 
\begin{itemize}
\item the semidegree $\delta^1$ corresponding to its curve at infinity  has key forms $g^1_0, \ldots, g^1_{l+2}$, and 
$$
\delta^1(g^1_i) = 
			\begin{cases}
				\omega_i &\text{if}\ 0 \leq i \leq k,\\
				\tilde \omega &\text{if}\ i = k+1,\\
				\omega_{i-1} &\text{if}\ k+2 \leq i \leq l+2.
			\end{cases}
$$
\item $\Gamma$ is the augmented and marked dual graph associated to the minimal $\pp^2$-dominating resolution of singularities of $\bar X^1$. 
\end{itemize}
Since $g^1_{k+2}$ is not a polynomial, $\bar X^1$ is not algebraic (Theorem \ref{effective-answer}), as required to complete the proof of assertion \eqref{non-algebraic-graph}.
\end{proof}

\appendix 

\section{Proof of Lemma \ref{comparison-lemma-2}} \label{comparisondix}

\begin{notation}
Fix $k$, $1 \leq k \leq l+1$. For $F \in \tilde A_k$ and $\mu \in \rr$, we write $[F]_{> \mu}$ for the sum of all monomial terms $H$ of $F$ such that $\omega(H) > \mu$.
\end{notation}

\begin{lemma} \label{F-phi-psi-inductive-agreement}
Fix $k$, $1 \leq k \leq l$. Pick $\psi_1, \psi_2 \in \dpsxc$ and $\mu \leq \omega_k \in \rr$. Assume 
\begin{compactenum}
\item \label{Puiseux-assumption} the first $k$ Puiseux pairs of each $\psi_j$ are $(q_1, p_1), \ldots, (q_k,p_k)$. 
\end{compactenum}
Assumption \eqref{Puiseux-assumption} implies that we can define $F_{\psi_j}^{(k-1)}, F_{\psi_j}^{(k)}$, $1 \leq j \leq 2$, as in Lemma \ref{comparison-lemma-2}. Assume 
\begin{compactenum}
\setcounter{enumi}{1}
\item \label{mu-assumption} $[F_{\psi_1}^{(k-1)}]_{> \mu}  = [F_{\psi_2}^{(k-1)}]_{> \mu}$, and
\item \label{omega-k-assumption} $[F_{\psi_j}^{(k-1)}]_{> \omega_k} = [F_k]_{> \omega_k}$ for each $j$, $1 \leq j \leq 2$.
\end{compactenum}
Then $[F_{\psi_1}^{(k)}]_{>(p_{k} -1)\omega_{k} + \mu} = [F_{\psi_2}^{(k)}]_{>(p_{k} -1)\omega_{k} + \mu}$.
\end{lemma}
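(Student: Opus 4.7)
The plan is to realize $F_{\psi_j}^{(k)}$ modulo $\ker\pi_k$ via the product identity \eqref{f-phi-n-m} and then appeal to uniqueness of the canonical lift (Lemma \ref{F-Phi}). Choose a common polydromy $P$ divisible by those of both $\psi_j$ and a primitive $P$-th root of unity $\zeta$; by Lemma \ref{star-lemma}\eqref{different-stars}, identity \eqref{f-phi-n-m} applied with $(m,n)=(k-1,k)$ simultaneously yields
\[
f_{\psi_j}^{(k)}=\prod_{i=0}^{p_k-1}\mu^i\star_{P} f_{\psi_j}^{(k-1)},\qquad \mu := \zeta^{p_0 p_1\cdots p_{k-1}},
\]
for both $j=1,2$. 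Setting $R_j := F_{\psi_j}^{(k-1)} - F_k$, assumption \eqref{omega-k-assumption} forces $[R_j]_{>\omega_k}=0$, whence $\omega(R_j)\le\omega_k$, while assumption \eqref{mu-assumption} gives $\omega(R_1-R_2)\le\mu$. Since $F_k\in A_{k-1}$ has only integer $x$-exponents, star action by any root of unity fixes it, so $\mu^i\star F_{\psi_j}^{(k-1)}=F_k+\mu^i\star R_j$.

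Next I transfer to $\tilde A_k$ by replacing $F_k$ with $y_k$: set $\hat G_{\psi_j} := \prod_{i=0}^{p_k-1}(y_k + \mu^i\star R_j)\in \tilde A_k$. Using $y_k - F_k\in\ker\pi_k$ and Lemma \ref{star-lemma}\eqref{star-projection}, one checks $\pi_k(\hat G_{\psi_j}) = f_{\psi_j}^{(k)}$. Expanding via elementary symmetric polynomials,
\[
\hat G_{\psi_j} \;=\; y_k^{p_k} + \sum_{m=1}^{p_k} y_k^{p_k-m}\, e_m\bigl(\mu^0\star R_j,\ldots,\mu^{p_k-1}\star R_j\bigr),
\]
so the $m=0$ contribution $y_k^{p_k}$ is independent of $j$. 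For each $m\ge 1$, a standard telescoping argument using $\omega(\mu^i\star R_j)=\omega(R_j)\le\omega_k$ and $\omega(\mu^i\star(R_1-R_2))=\omega(R_1-R_2)\le\mu$ yields $\omega\bigl(e_m(\mu^\bullet\star R_1)-e_m(\mu^\bullet\star R_2)\bigr)\le (m-1)\omega_k+\mu$, and summing over $m$ gives $\omega(\hat G_{\psi_1}-\hat G_{\psi_2})\le (p_k-1)\omega_k+\mu$.

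The final step passes from $\hat G_{\psi_j}$ to the canonical lift $F_{\psi_j}^{(k)}$. The degrees of $\hat G_{\psi_j}$ in $y_1,\ldots,y_{k-1}$ may exceed the bounds of Lemma \ref{F-Phi}, so I apply the rewriting rules $y_{j'}^{p_{j'}}\mapsto y_{j'+1}+\sum_i c_{j',i}\,x^{\beta^i_{j',0}}y_1^{\beta^i_{j',1}}\cdots y_{j'}^{\beta^i_{j',j'}}$ for $j'=1,\ldots,k-1$ (valid modulo $\ker\pi_k$, as $y_{j'+1}-F_{j'+1}\in\ker\pi_k$). The principal obstacle is checking that every such substitution is $\omega$-non-increasing; this follows from Lemma \ref{f-k-lemma}\eqref{decreasing-omega}, which ensures that both sides of the substitution have the same leading $\omega$-value $p_{j'}\omega_{j'}$, so each monomial introduced has $\omega$-value at most that of the monomial it replaces. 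Applying these reductions iteratively to $\hat G_{\psi_1} - \hat G_{\psi_2}$ yields an element satisfying the canonical constraints of Lemma \ref{F-Phi} with $\omega$-value still $\le(p_k-1)\omega_k+\mu$; since the $y_k^{p_k}$ terms of $\hat G_{\psi_1}$ and $\hat G_{\psi_2}$ cancel and the reductions do not alter the $y_k$-degree, this element has $\deg_{y_k}<p_k$. Uniqueness in Lemma \ref{F-Phi} identifies it with $F_{\psi_1}^{(k)}-F_{\psi_2}^{(k)}$, giving $\omega\bigl(F_{\psi_1}^{(k)}-F_{\psi_2}^{(k)}\bigr)\le(p_k-1)\omega_k+\mu$, which is the claim.
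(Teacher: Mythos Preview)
Your argument is correct and follows essentially the same route as the paper's: lift the product identity \eqref{f-phi-n-m} to $\tilde A_k$ by replacing $F_k$ with $y_k$, bound the $\omega$-value of the difference (you via elementary symmetric polynomials, the paper via the decomposition $R_j = G + G_j$), and then normalize by $\omega$-non-increasing rewrites so that the uniqueness in Lemma \ref{F-Phi} applies. One small inaccuracy that does not affect the proof: the rewrite for $j'=k-1$ replaces $y_{k-1}^{p_{k-1}}$ by $y_k+\cdots$, so it \emph{can} raise $\deg_{y_k}$; this is harmless because Lemma \ref{F-Phi} places no constraint on $\deg_{y_k}$, and hence your appeal to uniqueness already goes through without the sentence about $y_k$-degree.
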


\begin{proof}
Let 
\begin{align*}
\tilde A:= \begin{cases}
\tilde A_1 & \text{if}\ k = 1, \\
\tilde A_{k-1} & \text{otherwise.}
\end{cases}
\end{align*}
Assumptions \ref{mu-assumption} and \ref{omega-k-assumption} imply that there exists $G \in \tilde A$ with $\omega(G) \leq \omega_k$ such that for both $j$, $1 \leq j \leq 2$,
$$F_{\psi_j}^{(k-1)} = F_k + G + G_j$$
for some $G_j \in \tilde A$ with $\omega(G_j) \leq \mu$. Fix $j$, $1 \leq j \leq 2$. Let $m_j$ be the polydromy order of $\psi_j$ and $\mu_j$ be a primitive $m_j$-th root of unity. Then identity \eqref{f-phi-n-m} and assertion \eqref{star-projection} of Lemma \ref{star-lemma} imply that 
\begin{align*}
f_{\psi_j}^{(k)} &= \prod_{i=0}^{p_k-1} \mu_j^{ip_1 \cdots p_{k-1}} \star_{m_j} \left(f_{\psi_j}^{(k-1)}\right) = \pi_{k-1}(G^*_j)\quad\ \text{where}\\
G^*_j 	&:= \prod_{i=0}^{p_k-1} \mu_j^{ip_1 \cdots p_{k-1}} \star_{m_j} \left(F_{\psi_j}^{(k-1)}\right) 
= \prod_{i=0}^{p_k-1} \mu_j^{ip_1 \cdots p_{k-1}} \star_{m_j} \left(F_k + G + G_j \right) \\
&= \prod_{i=0}^{p_k-1} \left(F_k + \mu_j^{ip_1 \cdots p_{k-1}} \star_{m_j}G + \mu_j^{ip_1 \cdots p_{k-1}} \star_{m_j}G_j \right)
= \prod_{i=0}^{p_k-1} \left(F_k + \mu^{ip_1 \cdots p_{k-1}} \star_{m}G + \mu_j^{ip_1 \cdots p_{k-1}} \star_{m_j}G_j \right)
\end{align*}
where $m$ is the {\em polydromy order} of $G$ (Definition \ref{star-defn}) and $\mu$ is a primitive $m$-th root of unity (the last equality is an implication of assertion \eqref{different-stars} of Lemma \ref{star-lemma}). Let 
\begin{align} \label{zeroth-step}
G_{j,0} &:= \prod_{i=0}^{p_k-1} \left(y_k + \mu^{ip_1 \cdots p_{k-1}} \star_{m}G + \mu_j^{ip_1 \cdots p_{k-1}} \star_{m_j}G_j \right) \in \tilde A_k.
\end{align}
Note that $\pi_k(G_{j,0}) = f_{\psi_j}^{(k)} = \pi_k(F_{\psi_j}^{(k)})$. Now we construct $F_{\psi_j}^{(k)}$ from $G_{j,0}$ via constructing a sequence of elements $G_{j,0}, G_{j,1}, \ldots $ as follows:
\begin{itemize}
\item  For $\beta := (\beta_1, \ldots, \beta_{k}) \in \zz_{\geq 0}^{k}$, define
$$|\beta|_{k-1} := \sum_{j=1}^{k-1} p_0 \ldots p_{j-1} \beta_j.$$
Consider the well order $\prec^*_{k-1}$ on $\zz_{\geq 0}^{k}$ defined as follows: $ \beta \prec^*_{k-1} \beta'$ iff 
\begin{enumerate}
\item $|\beta|_{k-1} <  |\beta'|_{k-1}$, or  
\item $|\beta|_{k-1} =  |\beta'|_{k-1}$ and the left-most non-zero entry of $\beta - \beta'$ is negative. 
\end{enumerate}
\item Assume $G_{j,N}$ has been constructed, $N \geq 0$. Express $G_{j,N}$ as 
\begin{align*}
G_{j,N} 
	&= \sum_{\beta \in \zz_{\geq 0}^{k}} g_{j,N,\beta}(x) y_1^{\beta_1}\cdots y_{k}^{\beta_{k}} 
\end{align*} 
and define
\begin{align*}
\scrE_{j,N} &:= \{\beta \in \zz_{\geq 0}^{k}: g_{j,N,\beta} \neq 0\ \text{and}\ \beta_i \geq p_i\ \text{for some}\ i, 1 \leq i \leq k-1 \}.
\end{align*}
\item If $\scrE_{j,N}  = \emptyset$, then {\bf stop.} 
\item Otherwise pick the maximal element $\beta^* = (\beta^*_1, \ldots, \beta^*_k)$ of $\scrE_{j,N}$ with respect to $\prec^*_{k-1}$, and the maximal $i^*$, $1 \leq i^* \leq k-1$, such that $\beta^*_{i^*} \geq p_{i^*}$, and set 
\begin{align}
G_{j,N+1} 
	&= \sum_{\beta \neq \beta^*} g_{j,N,\beta}(x) y_1^{\beta_1}\cdots y_{k}^{\beta_{k}} 
		+  g_{j,N,\beta^*}(x) \prod_{i \neq i^*} (y_i)^{\beta^*_i}(y_i)^{\beta^*_{i^*} - p_{i^*}}
			(y_{i+1} - (F_{i+1} - y_{i^*}^{p_{i^*}})) 
	\label{G-next}
\end{align}
\end{itemize}

Assertion \eqref{F-k+1-agreement} of Lemma \ref{comparison-lemma-1} and assertion \eqref{F-Phi-exponents} of Lemma \ref{F-Phi} imply that the all the `new' exponents of $(y_1, \ldots, y_k)$ that appear in $G_{j,N+1}$ are smaller (with respect to $\prec^*_{k-1}$) than $\beta^*$, and it follows that the sequence of $G_{j,N}$'s stop at some finite value $N^*$ of $N$. 

\begin{proclaim} \label{last-step}
$G_{j,N^*} = F_{\psi_j}^{(k)}$.
\end{proclaim}

\begin{proof}
Indeed, \eqref{G-next} implies that $\pi_k(G_{j,N^*}) = \pi_k(G_{j,0}) = f_{\psi_j}^{(k)}$. Since, we must have $\scrE_{j,N^*} = \emptyset$ for $G_{j,N^*}$ to be the last element of the sequence of $G_{j,N}$'s, $G_{j,N^*}$ satisfies the characterizing properties of $F_{\psi_j}^{(k)} = F^{\pi_k}_{f_{\psi_j}^{(k)}}$ from Lemma \ref{F-Phi}. 
\end{proof}

Now note that, for each $i$, $1 \leq i \leq k-1$, every monomial term in $y_{i+1} - (F_{i+1} - y_i^{p_i})$ has $\omega$-value smaller than or equal to $\omega(y_i^{p_i})$ (assertion \eqref{decreasing-omega}, Lemma \ref{f-k-lemma}). It then follows from \eqref{zeroth-step} and \eqref{G-next} that $G_j$ has no effect on $[G_{j,N}]_{> (p_k-1)\omega_k + \mu}$ for any $N$; i.e.\ $[G_{1,N}]_{> (p_k-1)\omega_k + \mu} = [G_{2,N}]_{> (p_k-1)\omega_k + \mu}$ for all $N$. Claim \ref{last-step} then implies the lemma.
\end{proof}

\begin{cor} \label{stronger-phi-psi-agreement}
Let $\omega_{i,j} := \omega_i + q_{j}p_{j+1} \cdots p_{l+1} - q_ip_{i+1} \cdots p_{l+1}$ for $1 \leq i\leq j \leq l+1$. 
Fix $j$, $0 \leq j \leq l$. Let $\psi \in \dpsxc$ be such that $\psi \equiv_{r_{j+1}} \phi_{j+1}$ (where $r_1, \ldots, r_{l+1}$ and $\phi_1, \ldots, \phi_{l+1}$ are as in \eqref{r-k}, \eqref{phi-k} respectively). Then for all $i$ such that $0 \leq i \leq j$,
$$\left[F_{\psi}^{(i)}\right]_{> \omega_{i+1,  j+1}} = \left[F_{\phi_{j+1}}^{(i)}\right]_{> \omega_{i+1,  j+1}}.$$
\end{cor}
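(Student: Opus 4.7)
The plan is to induct on $i$, using Lemma \ref{F-phi-psi-inductive-agreement} to pass from $i$ to $i+1$, with Lemma \ref{comparison-lemma-2} supplying the technical hypothesis relating each $F_{\psi}^{(i)}$ (or $F_{\phi_{j+1}}^{(i)}$) to $F_{i+1}$. The underlying point is that, since $r_{j+1} \leq r_{i+1}$ for $i \leq j$, the assumption $\psi \equiv_{r_{j+1}} \phi_{j+1}$ implies $\psi \equiv_{r_{i+1}} \phi_\delta$, and analogously $\phi_{j+1} \equiv_{r_{i+1}} \phi_\delta$. Note that the first $j$ Puiseux pairs of both $\psi$ and $\phi_{j+1}$ are $(q_1, p_1), \ldots, (q_j, p_j)$, so $F_\psi^{(i)}$ and $F_{\phi_{j+1}}^{(i)}$ are well-defined in $\tilde A_i$ for $0 \leq i \leq j$.

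For the base case $i = 0$, observe that by the defining property of $F^{\pi_1}$ in Lemma \ref{F-Phi}, we have $F_\psi^{(0)} = y_1 - \psi(x)$ and $F_{\phi_{j+1}}^{(0)} = y_1 - \phi_{j+1}(x)$. Their difference is $\phi_{j+1}(x) - \psi(x)$, and the hypothesis $\psi \equiv_{r_{j+1}} \phi_{j+1}$ (combined with $\phi_{j+1} = [\phi_\delta]_{> r_{j+1}}$) implies that every monomial term of this difference has $x$-degree at most $r_{j+1}$. Its $\omega$-value is therefore bounded by $r_{j+1}\omega_0 = r_{j+1}\, p_1 \cdots p_{l+1} = q_{j+1} p_{j+2} \cdots p_{l+1} = \omega_{1, j+1}$, which gives the base case.

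For the inductive step, assuming $i < j$ and that the result holds for $i$, I would apply Lemma \ref{F-phi-psi-inductive-agreement} with $k := i+1$, $\psi_1 := \psi$, $\psi_2 := \phi_{j+1}$, and $\mu := \omega_{i+1, j+1}$. Hypothesis (ii) of that lemma is precisely the inductive hypothesis. Hypothesis (iii) follows from Lemma \ref{comparison-lemma-2} applied to $\psi$ and $\phi_{j+1}$ (both of which satisfy $\cdot \equiv_{r_{i+1}} \phi_\delta$ as noted above), giving $\omega(F_\psi^{(i)} - F_{i+1}) \leq \omega_{i+1}$ and likewise for $\phi_{j+1}$. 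The inequality $\mu \leq \omega_{i+1}$ needed in the lemma reduces to $q_{j+1} p_{j+2} \cdots p_{l+1} \leq q_{i+1} p_{i+2} \cdots p_{l+1}$, which holds because the characteristic exponents $q_k/(p_1 \cdots p_k)$ are strictly decreasing in $k$.

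The conclusion of Lemma \ref{F-phi-psi-inductive-agreement} then yields agreement of $F_\psi^{(i+1)}$ and $F_{\phi_{j+1}}^{(i+1)}$ above the level $(p_{i+1}-1)\omega_{i+1} + \omega_{i+1, j+1}$. The final step is the arithmetic identity
\[
(p_{i+1}-1)\omega_{i+1} + \omega_{i+1, j+1} = \omega_{i+2, j+1},
\]
which follows by direct substitution from the recursion $\omega_{i+2} = p_{i+1}\omega_{i+1} + (q_{i+2} - q_{i+1} p_{i+2}) p_{i+3} \cdots p_{l+1}$ of \eqref{omega-defn} and the definition of $\omega_{i+1, j+1}$. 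I do not expect any serious obstacle; the only delicate point is the bookkeeping to verify that the threshold produced by Lemma \ref{F-phi-psi-inductive-agreement} matches the threshold $\omega_{i+2, j+1}$ required at the next stage, which is handled by the arithmetic check above.
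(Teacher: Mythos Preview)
Your inductive step is circular. In the paper's logical order, Lemma~\ref{comparison-lemma-2} is not proved independently: Appendix~\ref{comparisondix} first proves Lemma~\ref{F-phi-psi-inductive-agreement}, then the present Corollary~\ref{stronger-phi-psi-agreement}, and only then deduces Lemma~\ref{comparison-lemma-2} as the special case $i=j=k$ (using $\omega_{k+1,k+1}=\omega_{k+1}$ and $F_{\phi_{k+1}}^{(k)}=F_{\phi_{k+1}}$, together with Lemma~\ref{comparison-lemma-1}\eqref{F-k+1-agreement}). So invoking Lemma~\ref{comparison-lemma-2} to verify hypothesis~(iii) of Lemma~\ref{F-phi-psi-inductive-agreement} while proving the corollary assumes what you are trying to prove.

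The paper's remedy is exactly the strengthening your single induction lacks: run a double induction, outer on $j$, inner on $i$. At the step where you need hypothesis~(iii) for $k=i+1$, apply the already-established case $(j',i')=(i,i)$ of the corollary (a strictly smaller outer index, since $i\le j'-1<j$) to each of $\psi$ and $\phi_{j+1}$ separately---both satisfy $\cdot\equiv_{r_{i+1}}\phi_{i+1}$---to obtain $[F_\psi^{(i)}]_{>\omega_{i+1}}=[F_{\phi_{i+1}}^{(i)}]_{>\omega_{i+1}}=[F_{\phi_{j+1}}^{(i)}]_{>\omega_{i+1}}$, and then identify this with $[F_{i+1}]_{>\omega_{i+1}}$ via Lemma~\ref{comparison-lemma-1}\eqref{F-k+1-agreement}. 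Apart from this, your outline matches the paper: the arithmetic identity $(p_{i+1}-1)\omega_{i+1}+\omega_{i+1,j+1}=\omega_{i+2,j+1}$ is the same check the paper performs. One minor slip in the base case: since $\pi_1(y_1)=f_1=y-\phi_1(x)$, one has $F_\psi^{(0)}=y_1+\phi_1(x)-\psi(x)$ rather than $y_1-\psi(x)$; the extra term cancels in the difference, so your conclusion there is unaffected.
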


\begin{proof}
At first we consider the $i = 0$ case. \eqref{f-phi-eqn} implies that $f_{\psi}^{(0)} = y - \psi(x)$ and $f_{\phi_{k+1}}^{(0)} = y - \phi_{k+1}(x)$. Then \eqref{F-psi-k-defn} implies that
\begin{align*}
F_{\psi}^{(0)} &= y_1 + \phi_1(x) - \psi(x) \\
F_{\phi_{k+1}}^{(0)} &= y_1 + \phi_1(x) - \phi_{j+1}(x)
\end{align*} 
It follows that 
\begin{align*}
\omega\left(F_{\psi}^{(0)} - F_{\phi_{j+1}}^{(0)}\right)
	& = \omega_0\deg_x(\phi_{j+1}(x) - \psi(x)) 
	\leq p_1 \cdots p_{l+1}r_{j+1} = q_{j+1}p_{j+2} \cdots p_{l+1} = \omega_{1,j+1}
\end{align*}
It follows that the corollary is true for $i=0$ and all $j$, $0 \leq j \leq l$. \\

Now we start the proof of the general case. We proceed by induction on $j$. It follows from the preceding discussion that the corollary is true for $j=0$. So assume it holds for $0 \leq j \leq j' \leq l-1$. To show that it holds for $j = j'+1$, we proceed by induction on $i$. By the same reasoning, we may assume that it also holds for $j=j'+1$ and $0 \leq i \leq i' \leq j'$. Pick $\psi$ such that $\psi \equiv_{r_{j'+2}} \phi_{j'+2}$. Then applying the induction hypothesis with $j = j'+1$ and $i = i'$, we have 
\begin{align}
[F_{\psi}^{(i')}]_{> \omega_{i'+1,j'+2}} = [F_{\phi_{j'+2}}^{(i')}]_{> \omega_{i'+1,j'+2}}
\label{appendix-psi-phi-1}
\end{align}
On the other hand, since $\psi \equiv_{r_{i'+1}} \phi_{i'+1}$, we can apply the induction hypothesis with $j = i'$ and $i = i'$ to obtain
\begin{align*}
[F_{\psi}^{(i')}]_{> \omega_{i'+1,i'+1}} = [F_{\phi_{i'+1}}^{(i')}]_{> \omega_{i'+1,i'+1}}
\end{align*}
Similarly, since $\phi_{j'+2} \equiv_{r_{i'+1}} \phi_{i'+1}$, we have
\begin{align*}
[F_{\phi_{j'+2}}^{(i')}]_{> \omega_{i'+1,i'+1}} = [F_{\phi_{i'+1}}^{(i')}]_{> \omega_{i'+1,i'+1}}
\end{align*}
Since $\omega_{i'+1,i'+1} = \omega_{i'+1}$, it follows that
\begin{align}
[F_{\psi}^{(i')}]_{> \omega_{i'+1}} 
	= [F_{\phi_{j'+2}}^{(i')}]_{> \omega_{i'+1}} 
	= [F_{\phi_{i'+1}}^{(i')}]_{> \omega_{i'+1}} 
	\label{appendix-psi-phi-2}
\end{align}
Identities \eqref{appendix-psi-phi-1}, \eqref{appendix-psi-phi-2} and assertion \eqref{F-k+1-agreement} of Lemma \ref{comparison-lemma-1} imply that $\psi$ and $\phi_{j'+2}$ satisfy the hypothesis of Lemma \ref{F-phi-psi-inductive-agreement} with $\mu = \omega_{i'+1, j'+2}$ and $k = i'+1$. Lemma \ref{F-phi-psi-inductive-agreement} therefore implies that
$$[F_{\psi}^{(i'+1)}]_{> \mu'} 
= [F_{\phi_{j'+2}}^{(i'+1)}]_{> \mu'}.$$
where $\mu' = (p_{i'+1} - 1)\omega_{i'+1} + \omega_{i'+1, j'+2}$. It is straightforward to check using \eqref{omega-defn} that $\mu' = \omega_{i'+2,j'+2}$, as required to complete the induction.
\end{proof}

\begin{proof}[\bf Proof of Lemma \ref{comparison-lemma-2}]
Since $\omega_{k+1} = \omega_{k+1,k+1}$ and $F_{\phi_{k+1}}^{(k)} = F_{\phi_{k+1}}$, Lemma \ref{comparison-lemma-2} is simply a special case of Corollary \ref{stronger-phi-psi-agreement}. \end{proof}

\section{Proof of Lemma \ref{poly-positive-prop}} \label{polynomial-appendix}

We freely use the notations of Section \ref{polynomial-section}. 

\begin{proof}[\bf Proof of assertion \eqref{wp-assertion} of Lemma \ref{poly-positive-prop}]
Since $f_0 = x$ and each $f_j$, $1 \leq j \leq l+1$, is monic in $y$ with $\deg_y(f_j) = p_0 \cdots p_{j-1}$ (where $p_0 := 1$), it is straightforward to see that each polynomial $f \in \cc[x,y]$ can be represented as a finite sum of the form
\begin{align*}
f = \sum_{\beta \in \zz_{\geq 0}^{l+2}} a_\beta f_0^{\beta_0} \cdots f_{l+1}^{\beta_{l+1}}
\end{align*}
where for each $\beta = (\beta_0, \ldots, \beta_{l+1})$, we have $a_\beta \in \cc$ and $\beta_j < p_j$, $1 \leq j \leq l$. It suffices to show that 
\begin{align}
\delta(f) &= \max\{\delta(f_0^{\beta_0} \cdots f_{l+1}^{\beta_{l+1}}): c_\beta \neq 0 \}
\end{align}
We compute $\delta(f)$ via identity \eqref{phi-delta-defn}. Assertion \eqref{last-expansion} of Theorem \ref{key-thm} imply that 
\begin{align}
f_j|_{y = \tilde \phi_\delta(x,\xi)} 
	= \begin{cases}
		c^*_j x^{\omega_j/\omega_0} + \ldt				& \text{for}\ 0 \leq j \leq l,\\
		(c^*_{l+1}\xi + c_{l+1}) x^{\omega_{l+1}/\omega_0} + \ldt	& \text{for}\ j = l + 1,
	  \end{cases} \label{g-j-substitution}
\end{align}
where $c^*_j \in \cc^*$, $0 \leq j \leq l$, $c_{l+1} \in \cc$, and $\ldt$ denotes terms with lower degree in $x$. Let $d :=  \max\{\delta(f_0^{\beta_0} \cdots f_{l+1}^{\beta_{l+1}}): a_\beta \neq 0 \}$ and $\scrB:= \{\beta:a_\beta \neq 0,\ \delta(f_0^{\beta_0} \cdots f_{l+1}^{\beta_{l+1}}) = d \}$. It follows that  
\begin{align}
& f|_{y = \tilde \phi_\delta(x,\xi)} =  c(\xi) x^{d/\omega_0} + \ldt,\ \text{where} 
\label{c-xi-0} \\
& c(\xi) := \sum_{\beta \in \scrB} a_\beta (c^*_{l+1}\xi + c_{l+1}) ^{\beta_{l+1}}\prod_{j=0}^{l} (c^*_j)^{\beta_j}  \label{c-xi}
\end{align}
Now, assertion \eqref{assertion-H-1-2} of Lemma \ref{delta-omega-lemma} implies that for two distinct elements $\beta, \beta'$ of $\scrB$, $\beta_{l+1} \neq \beta'_{l+1}$. Identity \eqref{c-xi} then implies that $c(\xi) \neq 0$, so that \eqref{c-xi-0} implies that $\delta(f) = d$, as required to complete the proof of assertion \eqref{wp-assertion}. 
\end{proof}

For each $j$, $0 \leq j \leq l+1$, let $\Omega_j \subseteq \zz$ be the semigroup generated by $\omega_0, \ldots, \omega_j$; recall that for $j \geq 1$, condition \polsub{j} implies that $\Omega_{j-1} \subseteq \zz_{\geq 0}$ (Lemma \ref{polytivity-lemma}).

\begin{lemma} \label{semi-isomorphism}
Assume \polsub{l+1} holds. Fix $j$, $1 \leq j \leq l$. Let $\bar J_{j+1}$ be the ideal in $C_j$ generated by $H_2, \ldots, H_{j+1}$. Let $t$ be an indeterminate. Then
$$C_j/\bar J_{j+1} \cong \cc[\Omega_j] \cong \cc[t^{\omega_0}, \ldots, t^{\omega_j}],$$
via the mapping $x \mapsto t^{\omega_0}$ and $y_i \mapsto b_i t^{\omega_i}$, $1 \leq i \leq j$, for some $b_1, \ldots, b_j \in \cc^*$. 
\end{lemma}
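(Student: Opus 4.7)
The plan is to construct a surjective $\cc$-algebra homomorphism $\varphi : C_j \to \cc[\Omega_j]$ whose kernel equals $\bar J_{j+1}$. I would define $\varphi$ by $x \mapsto t^{\omega_0}$ and $y_i \mapsto b_i t^{\omega_i}$ for $1 \leq i \leq j$, choosing the scalars $b_i \in \cc^*$ inductively so that $\varphi(H_{i+1}) = 0$ for each $i$, $1\leq i\leq j$. To see such scalars exist, recall from assertion \eqref{decreasing-omega} of Lemma \ref{f-k-lemma} that $p_i\omega_i = \sum_{k=0}^{i-1}\beta^0_{i,k}\omega_k$; hence both monomials of $H_{i+1}$ carry the same power of $t$ under $\varphi$, and $\varphi(H_{i+1})=0$ reduces to the single equation $b_i^{p_i} = c_{i,0}\, b_1^{\beta^0_{i,1}}\cdots b_{i-1}^{\beta^0_{i,i-1}}$, which is solvable in $\cc^*$ since $\cc$ is algebraically closed.

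This forces $\bar J_{j+1} \subseteq \ker\varphi$, so $\varphi$ descends to $\bar\varphi : C_j/\bar J_{j+1} \to \cc[\Omega_j]$. Surjectivity of $\bar\varphi$ is immediate: any $\omega = \sum_i \gamma_i\omega_i \in \Omega_j$ with $\gamma_i \in \zz_{\geq 0}$ is, up to the nonzero scalar $\prod_i b_i^{\gamma_i}$, the image of $x^{\gamma_0}y_1^{\gamma_1}\cdots y_j^{\gamma_j}$.

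For injectivity, first note that under \polsub{l+1} each $H_{i+1}$ actually lies in $C_j$: polynomiality of $F_{i+1}\in C_i$ forces $\beta^0_{i,0}\geq 0$, and the surviving $y$-variables satisfy $i-1 < j$. The leading monomial of $H_{i+1}$ with respect to $\prec$ is $y_i^{p_i}$, so standard polynomial division by $\{H_2,\ldots,H_{j+1}\}$ reduces any $f\in C_j$ to a representative
$$\bar f = \sum_{\beta} a_\beta\, x^{\beta_0} y_1^{\beta_1}\cdots y_j^{\beta_j}, \quad \text{with}\ \beta_0\geq 0\ \text{and}\ 0\leq \beta_i < p_i\ \text{for}\ 1\leq i\leq j.$$
Then $\varphi(\bar f) = \sum_\beta a_\beta\bigl(\prod_{i=1}^j b_i^{\beta_i}\bigr) t^{\sum_{i=0}^j \beta_i\omega_i}$. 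By assertion \eqref{assertion-H-1-2} of Lemma \ref{delta-omega-lemma}, distinct tuples $(\beta_0,\ldots,\beta_j)$ subject to these bounds yield distinct $\omega$-values $\sum_i\beta_i\omega_i$, so the summands of $\varphi(\bar f)$ are linearly independent over $\cc$. Hence $\varphi(f)=0$ forces $\bar f=0$, i.e.\ $f\in\bar J_{j+1}$, as required.

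The only technical obstacle is bookkeeping — one must check that the reduction stays inside $C_j$ (handled by polynomiality of $F_{i+1}$ under \polsub{l+1}) and that the exponent bounds from assertion \eqref{exponent-bound} of Lemma \ref{f-k-lemma} exactly match the hypothesis of assertion \eqref{assertion-H-1-2} of Lemma \ref{delta-omega-lemma}, which supplies the required distinctness of $\omega$-values.
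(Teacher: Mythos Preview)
Your argument is correct, and it takes a genuinely different route from the paper's proof. The paper proceeds by induction on $j$: for $j=1$ it identifies $C_1/\bar J_2 = \cc[x,y_1]/\langle y_1^{p_1} - c_{1,0}x^{q_1}\rangle$ with $\cc[t^{p_1},t^{q_1}]$ directly, and in the inductive step it writes $C_j/\bar J_{j+1}$ as $\cc[t^{\omega_0},\ldots,t^{\omega_{j-1}},y_j]/\langle y_j^{p_j} - \tilde c\, t^{p_j\omega_j}\rangle$ and then invokes the minimality of $p_j$ (assertion \eqref{alpha=p} of Proposition \ref{essential-value-prop}) to conclude this is $\cc[t^{\omega_0},\ldots,t^{\omega_j}]$. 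Your approach instead writes down the map $\varphi$ at once, produces a normal form for each class in $C_j/\bar J_{j+1}$ by division (the leading terms $y_1^{p_1},\ldots,y_j^{p_j}$ are pairwise coprime, so this is unproblematic), and then appeals to assertion \eqref{assertion-H-1-2} of Lemma \ref{delta-omega-lemma} to see that $\varphi$ separates those normal forms. The paper's argument makes the semigroup-ring structure emerge one generator at a time; yours is more economical, reuses an already-proved lemma for the key injectivity step, and is closer in spirit to the Gr\"obner-basis viewpoint used immediately afterward in the proof of assertion \eqref{grobner-assertion}.
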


\begin{proof}
We proceed by induction on $j$. For $j=1$, identity \eqref{H-defn} and assertions \eqref{zero} and \eqref{decreasing-omega} of Lemma \ref{f-k-lemma} imply that
\begin{align*}
C_1/\bar J_2 = \cc[x,y_1]/\langle y_{1}^{p_1} - c_{1,0} x^{q_1} \rangle \cong \cc[t^{p_1}, t^{q_1}],
\end{align*}
where $t$ is an indeterminate and the isomorphism maps $x \mapsto t^{p_1}$ and $y_1 \mapsto c_{1,0}^{1/p_1}t^{q_1}$, where $c_{1,0}^{1/p_1}$ is a $p_1$-th root of $c_{1,0} \in \cc^*$. Since $\omega_0 = p_1p_2 \cdots p_l$ and $\omega_1 = q_1p_2 \cdots p_l$, this proves the lemma for $j = 1$. Now assume that the lemma is true for $j-1$, $2 \leq j \leq l$, i.e.\ there exists an isomorphism 
\begin{align*}
C_{j-1}/\bar J_{j} \cong \cc[t^{\omega_0},\ldots, t^{\omega_{j-1}}],
\end{align*}
which maps $x \mapsto t^{\omega_0}$ and $y_i \mapsto b_it^{\omega_i}$, $1 \leq i \leq j-1$ for some $b_{1}, \cdots, b_{j-1} \in \cc^*$. It follows that 
\begin{align*}
C_{j}/\bar J_{j+1} 
&= C_{j-1}[y_{j}]/\langle \bar J_{j}, y_{j}^{p_j} - c_{j,0} x^{\beta^0_{j,0}} y_1^{\beta^0_{j,1}} \cdots y_{j-1}^{\beta^0_{j,j-1}} \rangle 
\cong \cc[t^{\omega_0},\ldots, t^{\omega_{j-1}},y_j]/ \langle y_{j}^{p_j} - \tilde c  t^{p_j\omega_j} \rangle
\end{align*}
for some $\tilde c \in \cc^*$ (the last isomorphism uses assertion \eqref{decreasing-omega} of Lemma \ref{f-k-lemma}). Since $p_j = \min\{\alpha \in \zz_{> 0}; \alpha\omega_j \in \zz \omega_0 + \cdots + \zz \omega_{j-1}\}$ (assertion \eqref{alpha=p} of Proposition \ref{essential-value-prop}), it follows that 
\begin{align*}
\cc[t^{\omega_0},\ldots, t^{\omega_{j-1}},y_j]/ \langle y_{j}^{p_j} - \tilde c  t^{p_j\omega_j} \rangle \cong \cc[t^{\omega_0},\ldots, t^{\omega_{j}}]
\end{align*}
via a map which sends $y_j \mapsto (\tilde c)^{1/p_j}t^{\omega_j}$ ($(\tilde c)^{1/p_j}$ being a $p_j$-th root of $\tilde c$), which completes the induction and proves the lemma.
\end{proof}

Let $z$ be an indeterminate and $\hat C_{l+1} := C_{l+1}[z] = \cc[z,x,y_1, \ldots, y_{l+1}]$. Let $\hat \omega$ be the weighted degree on $\hat C_{l+1}$ such that $\hat \omega(z) = 1$ and $\hat \omega|_{C_{l+1}} = \omega$. Equip $\hat C^{l+1}$ with the grading determined by $\hat \omega$. Let $S^\delta$ be as in assertion \eqref{wp-assertion} of Lemma \ref{poly-positive-prop} and $\hat \pi: \hat C_{l+1} \to S^\delta$ be the map which sends $z \mapsto (1)_1$, $x \mapsto (x)_{\omega_0}$, and $y_j \mapsto (f_j)_{\omega_j}$, $1 \leq j \leq l+1$. Assertion \eqref{wp-assertion} implies that $\hat \pi$ is a surjective homomorphism of graded rings. Let $I$ be the ideal generated by $(1)_1$ in $S^{\delta}$ and $\hat J_{l+1} := \hat \pi^{-1}(I) \subseteq \hat C_{l+1}$. 

\begin{claim} \label{generating-claim}
$\hat J_{l+1}$ is generated by $\hat \scrB_{l+1} := (H_{l+1}, \ldots, H_2, z)$. 
\end{claim}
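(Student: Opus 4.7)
The plan is to reduce the claim to a comparison of Hilbert series after realizing both sides as graded objects.

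Step 1: Verify the containment $(z, H_2, \ldots, H_{l+1}) \subseteq \hat J_{l+1}$. For $z$ this is immediate since $\hat\pi(z) = (1)_1$ generates $I$. For $H_j$ with $2 \le j \le l+1$, I would apply $\hat\pi$ term by term to
$$H_j = y_{j-1}^{p_{j-1}} - c_{j-1,0}\, x^{\beta^0_{j-1,0}} y_1^{\beta^0_{j-1,1}} \cdots y_{j-2}^{\beta^0_{j-1,j-2}},$$
and, using $\sum_{i=0}^{j-2} \beta^0_{j-1,i}\omega_i = p_{j-1}\omega_{j-1}$ from assertion \eqref{decreasing-omega} of Lemma \ref{f-k-lemma} together with the expansion of $f_j$ in the same lemma, rewrite
$$\hat\pi(H_j) = \Bigl(f_j + \sum_{i \ge 1} c_{j-1,i}\, x^{\beta^i_{j-1,0}} f_1^{\beta^i_{j-1,1}} \cdots f_{j-1}^{\beta^i_{j-1,j-1}}\Bigr)_{p_{j-1}\omega_{j-1}}.$$
By assertion \eqref{decreasing-omega} of Lemma \ref{f-k-lemma}, every summand inside has $\delta$-value strictly less than $p_{j-1}\omega_{j-1}$, so $\hat\pi(H_j) = (1)_1\cdot g$ for some $g \in S^\delta$; this puts $\hat\pi(H_j) \in I$ and hence $H_j \in \hat J_{l+1}$.

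Step 2: The containment above yields a surjective graded $\cc$-algebra map
$$\bar\pi:\ A := \hat C_{l+1}/(z, H_2, \ldots, H_{l+1}) \longrightarrow S^\delta/I.$$
Under assumption \polsub{l+1} every non-constant polynomial in $\cc[x,y]$ has positive $\delta$-value (Lemma \ref{polytivity-lemma} together with assertion \eqref{wp-assertion} of Lemma \ref{poly-positive-prop}), so $\scrF^\delta_d = 0$ for $d < 0$, $S^\delta = \cc[X]^\delta$, and $S^\delta/I \cong \gr\cc[X]^\delta$. Since the $H_j$ do not involve $y_{l+1}$, we get $A \cong (C_l/\bar J_{l+1})[y_{l+1}]$, which by Lemma \ref{semi-isomorphism} is isomorphic to $\cc[\Omega_l][y_{l+1}]$ with the grading $\deg t^n = n$ for $n \in \Omega_l$ and $\deg y_{l+1} = \omega_{l+1}$. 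A direct monomial count then gives $\dim_\cc A_d = \#\{a \in \zz_{\ge 0} : d - a\omega_{l+1} \in \Omega_l\}$.

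Step 3: Match dimensions on the target. The proof of assertion \eqref{wp-assertion} of Lemma \ref{poly-positive-prop} furnishes each $f \in \cc[x,y]$ with a unique representation $f = \sum a_\beta f_0^{\beta_0}\cdots f_{l+1}^{\beta_{l+1}}$ with $\beta_j < p_j$ for $1 \le j \le l$, and shows $\delta(f) = \max\{\omega(\beta) : a_\beta \neq 0\}$. This produces a $\cc$-basis of $\scrF^\delta_d/\scrF^\delta_{d-1}$ indexed by admissible $\beta$ with $\omega(\beta) = d$. Splitting off $a := \beta_{l+1}$, the remaining $(\beta_0,\ldots,\beta_l)$ with $\beta_j < p_j$ for $1 \le j \le l$ and $\sum_{i=0}^l \beta_i\omega_i = d - a\omega_{l+1}$ are in bijection with $\Omega_l$ via Lemma \ref{semi-isomorphism}, so the basis index set is exactly $\{a \ge 0 : d - a\omega_{l+1} \in \Omega_l\}$. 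A surjective graded map between $\cc$-vector spaces with matching finite-dimensional pieces in every degree is an isomorphism, hence $\ker \bar\pi = 0$ and $\hat J_{l+1} = (z, H_2, \ldots, H_{l+1})$.

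The step I expect to require the most care is the dimension match in Step 3: invoking Lemma \ref{semi-isomorphism} requires $\Omega_l \subseteq \zz_{\ge 0}$, which is precisely where the polynomial hypothesis \polsub{l+1} enters (via Lemma \ref{polytivity-lemma}), and the linear independence of the admissible monomials sharing a common $\omega$-value but distinct $\beta_{l+1}$-coordinates must be read carefully out of the proof of assertion \eqref{wp-assertion} rather than taken for granted.
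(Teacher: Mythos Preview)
Your argument is correct and takes a genuinely different route from the paper's. The paper argues via Krull dimension: after the containment $(z,H_2,\ldots,H_{l+1}) \subseteq \hat J_{l+1}$, it identifies $\hat C_{l+1}/(z,H_2,\ldots,H_{l+1})$ with the two-dimensional domain $R := \cc[t^{\omega_0},\ldots,t^{\omega_l},y_{l+1}]$ via Lemma \ref{semi-isomorphism}, so that $\hat J_{l+1}/(z,H_2,\ldots,H_{l+1})$ corresponds to a prime ideal $\ppp \subseteq R$; since $\dim R/\ppp = \dim S^\delta/I = \dim S^\delta - 1 = 2$ and $R$ is a two-dimensional domain, $\ppp = 0$. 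Your approach instead matches graded Hilbert functions, extracting an explicit basis of $\scrF^\delta_d/\scrF^\delta_{d-1}$ from the proof of assertion \eqref{wp-assertion}. This makes the combinatorics transparent and avoids any appeal to dimension theory.

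One point to watch: your final step relies on each graded piece $A_d$ being finite-dimensional, which needs $\omega_{l+1} > 0$. This is \emph{not} a consequence of \polsub{l+1} alone (e.g.\ the semidegree with $\tilde\phi_\delta = \xi x^{-5}$ has all key forms polynomial yet $\omega_1 = -5$), and Lemma \ref{polytivity-lemma} only bounds $\delta(g_j)$ for $j \leq n$, not $\delta(g_{n+1}) = \omega_{l+1}$. In every use of Lemma \ref{poly-positive-prop} in the paper one does have $\omega_{l+1} > 0$ (Remark \ref{analytic-contractibility}), so your proof suffices there; the paper's dimension argument happens to work uniformly without this extra assumption. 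Relatedly, the identification $S^\delta = \cc[X]^\delta$ in your Step 2 is not needed for the argument: you can compute $(S^\delta/I)_d = \scrF^\delta_d/\scrF^\delta_{d-1}$ directly for all $d \in \zz$.
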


\begin{proof}
Let $\bar J_{l+1}$ be the ideal of $C_l$ as defined in Lemma \ref{semi-isomorphism}, and $\hat J'_{l+1}$ be the ideal of $\hat C_{l+1}$ generated by $\bar J_{l+1}$ and $z$. It is straightforward to see that $\hat J'_{l+1} \subseteq \hat J_{l+1}$. Lemma \ref{semi-isomorphism} implies that 
\begin{align*}
\hat C_{l+1}/\hat J'_{l+1} \cong \cc[t^{\omega_0},\ldots, t^{\omega_l}, y_{l+1}]
\end{align*}
Let $R := \cc[t^{\omega_0},\ldots, t^{\omega_l}, y_{l+1}]$. Then $S^\delta/I \cong \hat C_{l+1}/\hat J_{l+1} \cong R/\ppp$ for some prime ideal $\ppp$ of $R$. Now, it follows from the construction of $S^\delta$ that $\dim(S^\delta) = 3$. Since $I$ is the principal ideal generated by a non-zero divisor in $S^\delta$, it follows that $\dim(R/\ppp) = \dim(S^\delta/I) = 2$. Since $R$ is an integral domain of dimension $2$, we must have $\ppp = 0$, which implies the claim.
\end{proof}

\begin{proof}[\bf Proof of assertion \eqref{grobner-assertion} of Lemma \ref{poly-positive-prop}]
Since $J_{l+1} = \hat J_{l+1} \cap C_{l+1}$, Claim \ref{generating-claim} shows that $\scrB_{l+1}$ generates $J_{l+1}$. Therefore, to show that $\scrB_k$ is a Gr\"obner basis of $J_k$ with respect to $\prec_k$, it suffices to show that running a step of {\em Buchberger's algorithm} with $\scrB_{l+1}$ as input leaves $\scrB_{l+1}$ unchanged. We follow Buchberger's algorithm as described in \cite[Section 2.7]{littlesheacox}, which consists of performing the following steps for each pair of $H_i, H_j \in \scrB_{l+1}$, $2 \leq i < j \leq l+1$:\\

\paragraph{\bf Step 1: Compute the {\em S-polynomial} $S(H_i, H_j)$ of $H_i$ and $H_j$.} The leading terms of $H_i$ and $H_j$ with respect to $\prec$ are respectively $\lt_{\prec}(H_i) = y_{i-1}^{p_{i-1}}$ and $\lt_{\prec}(H_j) = y_{j-1}^{p_{j-1}}$, so that the {\em S-polynomial} of $H_i$ and $H_j$ is
\begin{align*}
S(H_i, H_j) &:= y_{j-1}^{p_{j-1}}H_i - y_{i-1}^{p_{i-1}}H_j  \\
&= -\left(c_{i-1,0}x^{\beta^0_{i-1,0}} y_1^{\beta^0_{i-1,1}} \cdots y_{i-2}^{\beta^0_{i-1,i-2}}\right)y_{j-1}^{p_{j-1}} 
+ \left(c_{j-1,0}x^{\beta^0_{j-1,0}} y_1^{\beta^0_{j-1,1}} \cdots y_{j-2}^{\beta^0_{j-1,j-2}}\right)y_{i-1}^{p_{i-1}}.
\end{align*}

\paragraph{\bf Step 2: Divide $S(H_i, H_j)$ by $\scrB_k$ and if the remainder is non-zero, then adjoin it to $\scrB_{l+1}$.} Since $i < j$, the leading term of $S(H_i,H_j)$ is 
\begin{align*}
\lt_{\prec}\left(S(H_i, H_j)\right) &= -\left(c_{i-1,0}x^{\beta^0_{i-1,0}} y_1^{\beta^0_{i-1,1}} \cdots y_{i-2}^{\beta^0_{i-1,i-2}}\right)y_{j-1}^{p_{j-1}}.
\end{align*}
Since $\beta^0_{i-1,j'} < p_{j'}$ for all $j'$, $1 \leq j' \leq i-1$ (assertion \eqref{exponent-bound} of Lemma \ref{f-k-lemma}), it follows that $H_j$ is the only element of $\scrB_{l+1}$ such that $\lt_{\prec}(H_j)$ divides $\lt_{\prec}\left(S(H_i, H_j)\right)$. The remainder of the division of $S(H_i, H_j)$ by $H_j$ is
\begin{align*}
S_1 &:= S(H_i, H_j) + \left(c_{i-1,0}x^{\beta^0_{i-1,0}} y_1^{\beta^0_{i-1,1}} \cdots y_{i-2}^{\beta^0_{i-1,i-2}}\right)H_j
= \left(c_{j-1,0}x^{\beta^0_{j-1,0}} y_1^{\beta^0_{j-1,1}} \cdots y_{j-2}^{\beta^0_{j-1,j-2}}\right)H_i,
\end{align*}
so that the leading term of $S_1$ is 
\begin{align*}
\lt_{\prec}(S_1) &= \left(c_{j-1,0}x^{\beta^0_{j-1,0}} y_1^{\beta^0_{j-1,1}} \cdots y_{j-2}^{\beta^0_{j-1,j-2}}\right)y_{i-1}^{p_{i-1}}.
\end{align*}
It follows as in the case of $S(H_i,H_j)$ that $H_i$ is the only element of $\scrB_{l+1}$ whose leading term divides $\lt_{\prec}(S_1)$. Since $H_i$ divides $S_1$, the remainder of the division of $S_1$ by $H_i$ is zero, and it follows that the remainder of the division of $S(H_i, H_k)$ by $\scrB_k$ is zero. Consequently {\bf Step 2 concludes without changing $\scrB_{l+1}$}.\\

It follows from the preceding paragraphs that running one step of Buchberger's algorithm keeps $\scrB_{l+1}$ unchanged, and consequently $\scrB_{l+1}$ is a Gr\"obner basis of $J_{l+1}$ with respect to $\prec$ \cite[Theorem 2.7.2]{littlesheacox}. This completes the proof of assertion \eqref{grobner-assertion} of Lemma \ref{poly-positive-prop}.
\end{proof}

\printbibliography

\end{document}